\newtheorem{theorem}{Theorem}[section]
\newtheorem{lemma}[theorem]{Lemma}
\newtheorem{proposition}[theorem]{Proposition}
\newtheorem{corollary}[theorem]{Corollary}
\theoremstyle{remark}
\newtheorem{remark}[theorem]{Remark}
\theoremstyle{definition}
\newtheorem{definition}[theorem]{Definition}
\theoremstyle{definition}
\newtheorem{example}[theorem]{Example}
\theoremstyle{notation}
\newcommand{\fxy}{\mathrm{(}\mathrm{F}_{X,Y}\mathrm{)}}
\newcommand{\fgl}{\mathrm{(}\mathrm{F}_{\mathrm{Gl}}\mathrm{)}}
\newcommand{\fmp}{\mathrm{(}\mathrm{F}_{\mathrm{MP}}\mathrm{)}}
\newcommand{\norm}[1]{\left\Vert#1\right\Vert}
\newcommand{\abs}[1]{\left\vert#1\right\vert}
\newcommand{\set}[1]{\left\{#1\right\}}
\newcommand{\nhat}{\hat{\mathbb{N}}}
\newcommand{\Pm}{\mathrm{Prim}}
\newcommand{\Pme}{\mathrm{Prime}}
\newcommand{\MP}{\mathrm{Min\mbox{-}Primal}}
\newcommand{\Gl}{\mathrm{Glimm}}
\newcommand{\Fac}{\mathrm{Fac}}
\newcommand{\mint}{\otimes_{\alpha}}
\newcommand{\maxt}{\otimes_{\max}}
\newcommand\restr[2]{{
  \left.\kern-\nulldelimiterspace 
  #1 
  \right|_{#2} 
  }}
\title{Exact C$^{\ast}$-algebras and $C_0(X)$-structure}
\author{David McConnell\footnote{This work was supported
by the Science Foundation Ireland under grant 11/RFP/MTH3187.}\\
\textit{School of Mathematics, Trinity College, Dublin 2, Ireland} \\ \texttt{mcconnd@tcd.ie} 
}
\begin{document}

\maketitle

\begin{abstract}
We study tensor products of a $C_0 (X)$-algebra $A$ and a $C_0 (Y)$-algebra $B$, and analyse the structure of their minimal tensor product $A \mint B$ as a $C_0 (X \times Y)$-algebra.  We show that when $A$ and $B$ define continuous C$^{\ast}$-bundles, that continuity of the bundle arising from the $C_0 (X \times Y)$-algebra $A \mint B$ is a strictly weaker property than continuity of the `fibrewise tensor products' studied by Kirchberg and Wassermann.  For a fixed quasi-standard C$^{\ast}$-algebra $A$, we show that $A \mint B$ is quasi-standard for all quasi-standard $B$ precisely when $A$ is exact, and exhibit some related equivalences.
\end{abstract}
\section{Introduction}

Bundles or fields of C$^{\ast}$-algebras have long been an important aspect of the topological decomposition theory of C$^{\ast}$-algebras.  For instance the  Gelfand-Naimark theorem may be seen as representing an arbitrary commutative C$^{\ast}$-algebra as the section algebra of a bundle over its character space, with one-dimensional fibres.  In the general (non-commutative) case, many analogous constructions have been studied.  Often these constructions give rise to bundles of a very general type, and the structure of such bundles is in general difficult to determine. There are however many classes of C$^{\ast}$-algebras for which a well-behaved bundle representation may be obtained. In this work we are concerned with the stability of such classes of C$^{\ast}$-bundles under the operation of taking tensor products, in particular with respect to the minimal (or spatial) tensor norm.

A  particular case of interest in the theory of C$^{\ast}$-bundles is that where the base space coincides with the topological space of Glimm (or Minimal Primal) ideals of the bundle algebra.  This approach has its origins in~\cite{fell}, where it was shown that a C$^{\ast}$-algebra with Hausdorff primitive ideal space admits a representation as a continuous C$^{\ast}$-bundle (or a maximal full algebra of operator fields, using Fell's terminology) over this space.  The work of Dauns and Hofmann in~\cite{dauns_hofmann} gave a bundle representation theory that was valid for an arbitrary C$^{\ast}$-algebra, at the possible expense of continuity of the norm function of a section.

The work of Archbold and Somerset~\cite{arch_som_qs} identified a class of C$^{\ast}$-algebras with well-behaved Dauns-Hofmann representation, the so-called \emph{quasi-standard} C$^{\ast}$-algebras.  This class represents a significant weakening of the condition that the C$^{\ast}$-algebra has Hausdorff primitive spectrum, while preserving many of the nice properties associated with this stronger condition.  Indeed, all von Neumann algebras are quasi-standard~\cite{glimm}, as are   the `local multiplier algebras' studied extensively by Ara and Mathieu~\cite[\S 2.5]{ara_mat_loc} and many group C$^{\ast}$-algebras~\cite{kaniuth_groups}. 

 Tensor products of continuous bundles of C$^{\ast}$-algebras are known to exhibit pathological behaviour.  The earliest examples of this were given by Kirchberg and Wassermann in~\cite{kirch_wass}, who showed that continuity of a C$^{\ast}$-bundle was in general not preserved  by tensoring fibrewise with a fixed C$^{\ast}$-algebra $B$.  Moreover, it was shown that such an operation preserves continuity for all C$^{\ast}$-bundles precisely when $B$ is exact.  Archbold later obtained a localisation of this result in~\cite{arch_cb}, where continuity at a point was characterised in terms of a weaker exactness-type condition.  Similar questions have been studied extensively by Blanchard and coauthors in~\cite{blanch_def},~\cite{blanch_cx},~\cite{blanch_free},~\cite{blanch_exact} and~\cite{blanch_wass_exact}.

In~\cite{m_glimm}, we constructed the Glimm ideal space of the minimal tensor product of two C$^{\ast}$-algebras in terms of those of the factors. As a consequence, it is possible to construct the Dauns-Hofmann bundle of $A \mint B$ in terms of that of $A$ and $B$, although the fibre algebras of this bundle remain difficult to describe without additional assumptions on $A$ and $B$.  In particular, it is not immediate from our results in~\cite[\S5]{m_glimm} whether or not this bundle agrees with the fibrewise tensor product of Kirchberg and Wassermann.   Moreover, it remains difficult to show in general whether or not certain classes of C$^{\ast}$-algebras with well-behaved Dauns-Hofmann representations, for example the quasi-standard C$^{\ast}$-algebras, are stable under minimal tensor products.

The notion of a $C_0 (X)$-algebra, introduced by Kasparov~\cite{kasparov}, is closely related to that of a C$^{\ast}$-bundle.  In some sense $C_0 (X)$-algebras generalise the Dauns-Hofmann construction to give a bundle representation of a C$^{\ast}$-algebra $A$ over any locally compact Hausdorff space $X$ that is a continuous image of the primitive ideal space of $A$. In section~\ref{s:tensorbundle} we study a natural construction which equips the minimal tensor product $A \mint B$ of a $C_0 (X)$-algebra $A$ and a $C_0 (Y)$-algebra $B$ the structure of a $C_0 (X \times Y)$-algebra.  It has been observed previously that this bundle representation of $A \mint B$ may differ from the fibrewise tensor product~\cite{kirch_wass},~\cite{blanch_exact}, and that exactness of $A$ or $B$ plays a decisive role in these considerations.

 While the results of Kirchberg, Wassermann and Archbold give necessary and sufficient conditions for the continuity of the fibrewise minimal tensor product, less is known regarding the tensor product bundle that we study in this work.  Indeed, we show in section~\ref{s:ineq} that there are quasi-standard C$^{\ast}$-algebras $A$ and $B$ such that $A \mint B$ is quasi-standard, while the fibrewise tensor product of $A$ and $B$ gives rise to a discontinuous C$^{\ast}$-bundle.  
 
 Related work of Blanchard~\cite{blanch_exact} (concerning the amalgamated $C_0 (X)$-tensor product of two $C_0 (X)$-algebras $A$ and $B$) indicated that continuity may fail for the tensor product bundle that we define in \S~\ref{s:tensorbundle} also.  However, the argument used in~\cite[\S 3]{blanch_exact} relies on specific properties of the C$^{\ast}$-algebras involved.  We show in \S~\ref{s:exact} that  for an inexact continuous $C_0 (X)$-algebra $A$, one can always construct a continuous $C_0 (X)$-algebra $B$ such that $A \mint B$ is discontinuous as a $C_0 (X \times Y)$-algebra.  As a consequence it is shown in Theorem~\ref{t:exact} that stability of continuity is in fact equivalent to exactness of $A$.  Thus our tensor product construction identifies exactness in precisely the same way as the fibrewise tensor product of Kirchberg and Wassermann~\cite[Theorem 4.5]{kirch_wass}.

In \S 6 we investigate the question of stability of the property of quasi-standardness under the operation of taking tensor products (in particular with respect to the minimal C$^{\ast}$-norm).  One consequence of this is the observation that, in general, the C$^{\ast}$-bundle associated with the Dauns-Hofmann representation of such a tensor product is not, in general, given by the fibrewise tensor product of the corresponding bundles of the factors. 

Until now, it appears that there were no known examples of a pair of quasi-standard C$^{\ast}$-algebras whose minimal tensor product fails to be quasi-standard.  It was shown by Kaniuth in~\cite{kaniuth} that if $A \mint B$ satisfies Tomiyama's property (F), then $A \mint B$ is quasi-standard if and only if $A$ and $B$ are quasi-standard.  In particular this is the case whenever either $A$ or $B$ is exact.  The assumption of property (F) was weakened in~\cite{m_glimm} to an assumption involving exact sequences related to Glimm ideals.

 We show in Theorem~\ref{t:qs} that if $A$ is a quasi-standard C$^{\ast}$-algebra which is not exact, then one can always construct a quasi-standard C$^{\ast}$-algebra $B$ for which $A \mint B$ is not quasi-standard.   In particular it follows that a quasi-standard C$^{\ast}$-algebra $A$ is exact if and only if $A \mint B$ is quasi-standard for all quasi-standard $B$.  This is consistent with the characterisation of exactness obtained by Kirchberg and Wassermann in~\cite{kirch_wass}, though perhaps surprising in light of the results of section~\ref{s:ineq}.  Similarly, in the unital case we show in Theorem~\ref{t:maxt} that stability of the property of quasi-standardness under taking maximal tensor products is equivalent to nuclearity.

\section{Preliminaries on $C_0(X)$-algebras and C$^{\ast}$-bundles}
We begin with the basic definitions and properties of $C_0 (X)$-algebras and C$^{\ast}$-bundles, and list some well-known equivalences between the two.    Further we will introduce the relevant (topological) spaces of ideals that arise as natural candidates for the base space $X$ in this context, namely the spaces of Glimm and minimal primal ideals. A comprehensive introduction to the theory may be found in~\cite[Appendix C]{williams}, see also~\cite{nilsen_bundles}.

Throughout this work, $C_0 (X)$ will denote the algebra of continuous complex-valued functions vanishing at infinity on the locally compact Hausdorff space $X$.  For a C$^{\ast}$-algebra $A$, $\Pm (A)$ (respectively $\Fac (A)$) will denote the space of kernels of irreducible (respectively factorial) representations of $A$, and will be considered as a topological space with its usual hull-kernel topology.  Unless otherwise specified, an ideal of a C$^{\ast}$-algebra will mean a proper, closed two-sided ideal.    We will denote by $Z(A)$ the centre of $A$ and by $M(A)$ the multiplier algebra of $A$.
\begin{definition}
A C$^{\ast}$-bundle is a triple $\mathscr{A} = (X , A , \pi_x : A \rightarrow A_x )$ where $X$ is a locally compact Hausdorff space, $A$ a C$^{\ast}$-algebra, and $\pi_x : A \rightarrow A_x$  surjective $\ast$-homomorphisms for all $x \in X$ satisfying
\begin{enumerate}
\item[(i)] the family $\{ \pi_x : x \in X \}$ is faithful, i.e., $\bigcap_{x \in X} \ker ( \pi_x ) = \{ 0 \}$, and
\item[(ii)] for each $f \in C_0 (X)$ and $a \in A$ there is an element $f \cdot a \in A$ with the property that 
\[
\pi_x ( f \cdot a ) = f(x) \pi_x (a) \mbox{ for all } x \in X.
\]
\end{enumerate}
If in addition the functions $N(a): X \rightarrow \mathbb{R}_+$, $x \mapsto \| \pi_x (a) \|$ where $a \in A$, belong to $C_0(X)$ for all $a \in A$ then we say that $\mathscr{A}$ is a \emph{continuous C$^{\ast}$-bundle} over $X$.  If for all $a \in A$ the functions $N(a)$ are upper-semicontinuous (resp. lower-semicontinuous) on $X$, and if for each $\varepsilon > 0$  the set $\{ x \in X: N(a) \geq \varepsilon \}$ has compact closure in $X$, then we say that $\mathscr{A}$ is an \emph{upper-semicontinuous C$^{\ast}$-bundle} (resp. lower-semicontinuous C$^{\ast}$-bundle).
\end{definition}
\begin{remark}
There are several distinct (but often equivalent) definitions of C$^{\ast}$-bundles throughout the literature. Our definition of a continuous C$^{\ast}$-bundle was first used by Kirchberg and Wassermann in~\cite{kirch_wass}, and later by Archbold~\cite{arch_cb} and Blanchard and Wassermann~\cite{blanch_wass_exact}.  This is in turn equivalent to the definition of a `maximal algebra of operator fields' in the sense of Fell~\cite{fell}, also used in~\cite{dix},~\cite{apt},~\cite{lee} and~\cite{arch_som_qs}.  

While C$^{\ast}$-bundles with semicontinuous norm functions are discussed in~\cite{kirch_wass} and~\cite{arch_cb}, no precise definition of a semicontinuous C$^{\ast}$-bundle is given there.  In the upper-semicontinuous case, our definition is easily seen to be equivalent to that of Rieffel~\cite{rieffel}, also considered in~\cite{nilsen_bundles} and~\cite{williams}.
\end{remark}

\begin{definition}
Let $A$ be a C$^{\ast}$-algebra and $X$ a locally compact Hausdorff space.  We say that $A$ is a \emph{$C_0(X)$-algebra} if  there is a $\ast$-homomorphism $\mu_A : C_0 (X) \rightarrow ZM(A)$ with the property that $\mu_A (C_0 (X) )A = A$. 
\end{definition}

It follows from the  Dauns-Hofmann Theorem~\cite{dauns_hofmann} which we will discuss below (Theorem~\ref{t:dh}), that there is a $\ast$-isomorphism $\theta_A : C^b ( \Pm (A) ) \rightarrow ZM(A)$ with the property that
\begin{equation}
\label{e:dh}
\theta_A (f) a + P = f(P) (a + P ), \mbox{ for } a \in A , f \in C^b ( \Pm (A)), P \in \Pm (A).
\end{equation}
This gives an equivalent definition of a $C_0 (X)$-algebra: a C$^{\ast}$-algebra $A$ is a $C_0 (X)$-algebra if and only if there exists a continuous map $\phi_A : \Pm (A) \rightarrow X$.  The maps $\mu_A$ and $\phi_A$ are related via $ \mu_A (f) = \theta_A ( f \circ \phi_A )$ for all $f \in C_0 (X)$~\cite[Proposition C.5]{williams}.

For clarity we will denote any $C_0 (X)$-algebra $A$ by the triple $(A , X , \phi_A)$ or $(A , X , \mu_A )$.  For  $x \in X$ we define the ideal $I_x$ via
\[
I_x = \mu_A \left( \{ f \in C_0 (X) : f(x) = 0 \} \right) A = \bigcap \{ P \in \Pm (A) : \phi_A (P) = x \},
\]
see~\cite[Section 2]{nilsen_bundles} for example.

The relationship between $C_0 (X)$-algebras and C$^{\ast}$-bundles is well known, see~\cite{nilsen_bundles} or~\cite[Appendix C]{williams} for example.  We give details in the following proposition, which will be used frequently in what follows.
\begin{proposition}
\label{p:c0xbundles}
Let $A$ be a C$^{\ast}$-algebra and $X$ a locally compact Hausdorff space.
\begin{enumerate}
\item[(i)] If $(A,X,\mu_A )$ is a $C_0 (X)$-algebra, then with $A_x = A/ I_x$ and $\pi_x : A \rightarrow A_x $ the quotient $\ast$-homomorphism, the triple $(X,A, \pi_x : A \rightarrow A_x )$ is an upper-semicontinuous C$^{\ast}$-bundle~\cite[Theorem 2.3]{nilsen_bundles}.
\item[(ii)] If $(X,A,\pi_x :A \rightarrow A_x )$ is a C$^{\ast}$-bundle, then setting $\mu_A (f) a = f \cdot a$ for $f \in C_0 (X),a \in A$ defines a $\ast$-homomorphism $\mu_A : C_0 (X) \rightarrow ZM(A)$ such that $(A,X,\mu_A)$ is a $C_0 (X)$-algebra.   Moreover, $(X,A,\pi_x :A \rightarrow A_x )$ is an upper-semicontinuous C$^{\ast}$-bundle if and only if $\ker (\pi_x ) = I_x $ for all $x \in X$~\cite[Lemmas 2.1 and 2.3]{kirch_wass}.
\item[(iii)] The $C_0 (X)$-algebra $(A,X,\mu_A )$ gives rise to a continuous C$^{\ast}$-bundle $(X,A,\pi_x : A \rightarrow A_x )$ if and only if the corresponding base map $\phi_A : \Pm (A) \rightarrow X$ is an open map~\cite[Theorem 5]{lee}.
\end{enumerate}
\end{proposition}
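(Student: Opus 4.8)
The three parts are largely independent, and my plan is to verify the defining axioms of the relevant bundle structures directly, using the Dauns--Hofmann identification recorded in~\eqref{e:dh} as the bridge between the central multiplier action $\mu_A$ and the fibre quotients $\pi_x$. Throughout I would exploit that the family $\{\pi_x : x \in X\}$ is faithful precisely because $\bigcap_{x \in X} I_x = \bigcap_{P \in \Pm(A)} P = \{0\}$, the second equality holding since every $P$ lies over $\phi_A(P)$ and the intersection of all primitive ideals of a C$^\ast$-algebra is zero. This settles condition~(i) of the bundle definition at once in parts~(i) and~(ii).

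For part~(i), the module condition is satisfied by setting $f \cdot a = \mu_A(f)a$: applying~\eqref{e:dh} to $f \circ \phi_A$ shows that for each $P \in \Pm(A)$ lying over $x$ one has $\mu_A(f)a - f(x)a \in P$, so that $\mu_A(f)a - f(x)a \in I_x$ and hence $\pi_x(\mu_A(f)a) = f(x)\pi_x(a)$. The substantive point is upper-semicontinuity of $x \mapsto \|\pi_x(a)\| = \|a + I_x\|$. Here I would argue that if $\|a + I_{x_0}\| < \eps$ then there is $b \in I_{x_0}$, which one may take of the form $\sum_i \mu_A(f_i)a_i$ with $f_i(x_0)=0$, satisfying $\|a - b\| < \eps$; since each $f_i$ is small on a neighbourhood of $x_0$, so is $\|b + I_x\|$ there, whence $\|a + I_x\| < \eps$ on that neighbourhood. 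The compact-closure requirement on $\{x : \|\pi_x(a)\| \geq \eps\}$ then follows from nondegeneracy $\mu_A(C_0(X))A = A$ together with the vanishing at infinity of the functions $f_i \in C_0(X)$ used to approximate $a$.

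For part~(ii), given a bundle structure the formula $\mu_A(f)a = f \cdot a$ defines a module action, and the algebraic identities making $\mu_A(f)$ a \emph{central} multiplier---for instance $a(f\cdot b) = (f\cdot a)b$ and $(fg)\cdot a = f\cdot(g\cdot a)$, as well as $\mu_A$ being a $\ast$-homomorphism---may be checked after applying each $\pi_x$, where the action degenerates to scalar multiplication, and then transferred back using faithfulness of $\{\pi_x\}$. For the ``moreover'' clause one always has $I_x \subseteq \ker(\pi_x)$, since $\pi_x(\mu_A(f)a) = f(x)\pi_x(a) = 0$ whenever $f(x)=0$; the content is that the reverse inclusion $\ker(\pi_x) \subseteq I_x$ holds for every $x$ exactly when the norm functions are upper-semicontinuous, which I would establish by the same local approximation device as in part~(i).

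The delicate part is~(iii). Since upper-semicontinuity of each $N(a)$ is already guaranteed by~(i), continuity of the bundle is equivalent to lower-semicontinuity of every $N(a)$, so it suffices to match the latter with openness of $\phi_A$. The key formula is $\|a + I_x\| = \sup\{\|a + P\| : \phi_A(P) = x\}$, valid because the induced map $A/I_x \to \prod_{\phi_A(P)=x} A/P$ is an injective, hence isometric, $\ast$-homomorphism. Combining this with the standard fact that $P \mapsto \|a + P\|$ is lower-semicontinuous on $\Pm(A)$, I would show that openness of $\phi_A$ allows one to pull back a neighbourhood in $X$ to a neighbourhood in $\Pm(A)$ and thereby propagate a lower bound on $\|a+P\|$ to nearby fibres, yielding lower-semicontinuity of $N(a)$; conversely, a failure of openness at some $P$ produces, for a suitably chosen $a$, a downward jump of $N(a)$ at $\phi_A(P)$. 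This equivalence---matching an openness property of $\phi_A$ with semicontinuity of the fibre norms---is where the real work lies, and is the step I would expect to require the most care.
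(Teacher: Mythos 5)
Your proposal is correct in outline, but note that the paper offers no proof of this proposition at all: it is a catalogue of known facts, delegated to Nilsen~\cite[Theorem 2.3]{nilsen_bundles}, Kirchberg--Wassermann~\cite[Lemmas 2.1 and 2.3]{kirch_wass} and Lee~\cite[Theorem 5]{lee}. Your direct verification therefore cannot match ``the paper's approach''; what it does is reproduce the standard arguments of those references, and it does so essentially correctly: the Dauns--Hofmann identity handling faithfulness and the module condition in (i), the approximation of $a$ modulo $I_{x_0}$ by finite sums $\sum_i \mu_A(f_i)a_i$ with $f_i(x_0)=0$ for upper semicontinuity and for the compact-level-set condition, and, in (iii), the isometric embedding $A/I_x \hookrightarrow \prod_{\phi_A(P)=x} A/P$ giving $\norm{a+I_x} = \sup\set{\norm{a+P} : \phi_A(P)=x}$, combined with lower semicontinuity of $P \mapsto \norm{a+P}$ on $\Pm(A)$, which is precisely how Lee's theorem is proved. (One phrasing slip: openness of $\phi_A$ is used by pushing \emph{forward} the open set $\set{P : \norm{a+P} > \alpha}$, not by pulling back a neighbourhood.)

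Two spots are left as promises and should be filled. First, in (ii) you never verify nondegeneracy, $\mu_A(C_0(X))A = A$, which is part of the definition of a $C_0(X)$-algebra; for a bare C$^{\ast}$-bundle as defined in this paper it can genuinely fail (take $A = C^b(\mathbb{R})$ over $X = \mathbb{R}$ with $\pi_x$ evaluation: both bundle axioms hold, yet $\mu_A(C_0(\mathbb{R}))A = C_0(\mathbb{R}) \neq A$), so the first sentence of (ii) tacitly carries the conventions of~\cite{kirch_wass}. In the situations you actually use it, nondegeneracy is available: if $\ker(\pi_x) = I_x$ for some $x$, then for $f \in C_0(X)$ with $f(x)=1$ one has $a - \mu_A(f)a \in \ker(\pi_x) = I_x \subseteq \overline{\mu_A(C_0(X))A}$, whence $A = \mu_A(C_0(X))A$ by Cohen--Hewitt; alternatively, under the compact-level-set hypothesis, $\norm{a - \mu_A(f)a} = \sup_x \abs{1-f(x)} \norm{\pi_x(a)}$ can be made small. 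Your ``same local approximation device'' for $\ker(\pi_x) \subseteq I_x$ does work, but note it consumes this nondegeneracy through Cohen factorisation: writing $a = \mu_A(g)b$ gives $a - \mu_A(f)a = \mu_A(g - fg)b$ with $(g-fg)(x) = 0$, which is what places $a - \mu_A(f)a$ in $I_x$ rather than merely in $\ker(\pi_x)$. Second, the converse in (iii) is only asserted; the witness is standard and should be exhibited: if $\phi_A$ is not open, choose an open $U \subseteq \Pm(A)$ and $P_0 \in U$ such that $x_0 = \phi_A(P_0)$ is not interior to $\phi_A(U)$, a net $(x_\lambda)$ in $X \setminus \phi_A(U)$ with $x_\lambda \rightarrow x_0$, and $a \in k(\Pm(A) \setminus U)$ with $\norm{a + P_0} = 1$. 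Every $P$ with $\phi_A(P) = x_\lambda$ lies outside $U$ and hence contains $a$, so $N(a)(x_\lambda) = 0$ for all $\lambda$, while $N(a)(x_0) \geq \norm{a + P_0} = 1$; thus $N(a)$ fails to be lower semicontinuous at $x_0$. With these insertions your argument is complete.
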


As a consequence of Proposition~\ref{p:c0xbundles}, we will regard $C_0 (X)$-algebras and upper-semicontinuous C$^{\ast}$-bundles as being (essentially) equivalent.  Moreover, we may unambiguously speak of a $C_0 (X)$-algebra $(A,X,\mu_A)$ being \emph{continuous} if the corresponding C$^{\ast}$-bundle $(X,A,\pi_x: A \rightarrow A_x )$ is continuous.

If $A$ is a C$^{\ast}$-algebra such that $\Pm (A)$ is Hausdorff, then it is clear that the triple $(A, \Pm (A) , \theta_A )$, where $\theta_A$ is the Dauns-Hofmann $\ast$-isomorphism~(\ref{e:dh}), is a continuous $C_0 ( \Pm (A) )$-algebra.  Moreover, the fibre algebras of the corresponding C$^{\ast}$-bundle are necessarily simple in this case.  This construction was first studied by Fell~\cite{fell}.  However, the class of C$^{\ast}$-algebras $A$  with $\Pm (A)$  Hausdorff is somewhat narrow.    The Dauns-Hofmann Theorem~\cite{dauns_hofmann} shows that any C$^{\ast}$-algebra $A$ admits a representation as an upper-semicontinuous C$^{\ast}$-bundle over its space of \emph{Glimm ideals} (or a closely related space), which we define below.

For a C$^{\ast}$-algebra $A$, define an equivalence relation $\approx$ on $\Pm (A)$ as follows: for $P,Q \in \Pm (A)$, $P \approx Q$ if and only if $f(P) = f(Q)$ for all $f \in C^b ( \Pm (A) )$.  As a set, we define $\Gl (A)$ as the quotient space $\Pm (A) / \approx$, and we denote by $\rho_A : \Pm (A) \rightarrow \Gl (A)$ the quotient map.

For $f \in C^b ( \Pm (A))$, define $f^{\rho} : \Gl (A) \rightarrow \mathbb{C} $ as follows: $f^{\rho} \circ \rho_A (P) = f(P)$, which is well defined by definition of $\approx$.  We equip $\Gl (A)$ with the topology $\tau_{cr}$ induced by the functions $\{ f^{\rho} : f \in C^b ( \Pm (A) ) \}$, so that $(\Gl (A) , \tau_{cr} )$ is a completely regular Hausdorff space, and $f \mapsto f^{\rho}$ is a $\ast$-isomorphism of $C^b ( \Pm (A) )$ onto $C^b ( \Gl (A) )$, see~\cite[Theorem 3.9]{gill_jer} or~\cite[p. 351]{arch_som_qs}.  Note that the map $\rho_A : \Pm (A) \rightarrow \Gl (A)$ is continuous.

To each $\approx$-equivalence class $p = [P ] \in \Gl (A)$ we assign a closed two-sided ideal $G_p = \bigcap \{ P': P' \approx P \} = \bigcap \{ P': \rho_A (P') = p \}$, the  Glimm ideal of $A$ corresponding to $p$.

We wish to represent $A$ as a C$^{\ast}$-bundle over $\Gl (A)$, or a $C_0 ( \Gl (A) )$-algebra. In general however, it may happen that $\Gl (A)$ is not locally compact.  Being a completely regular Hausdorff space, $\Gl (A)$ has a homeomorphic embedding into its Stone-\v{C}ech compactification $\beta \Gl (A)$.

\begin{theorem}({J. Dauns, K.H. Hofmann)}
\label{t:dh}
Let $A$ be a C$^{\ast}$-algebra.  Then $A$ is a $C_0 (Y)$-algebra, and hence the section algebra of an upper-semicontinuous C$^{\ast}$-bundle $(Y , A , \pi_p : A \rightarrow A_p )$, where
\begin{enumerate}
\item[(i)] if $\Gl (A)$ is locally compact, $Y = \Gl (A)$, $A_p = A/ G_p$ and $\pi_p = q_p : A \rightarrow A / G_p $ the quotient $\ast$-homomorphism for all $p \in \Gl (A)$,
\item[(ii)] if $\Gl (A)$ is not locally compact, $Y = \beta \Gl (A)$, and
\begin{itemize}
\item for $p \in \Gl (A) $,  $A_p = A/ G_p$ and $\pi_p = q_p : A \rightarrow A / G_p $ the quotient $\ast$-homorphism, and
\item for $p \in \beta \Gl (A) \backslash \Gl (A)$, $A_p = \{ 0 \}$.
\end{itemize}

\end{enumerate}
\end{theorem}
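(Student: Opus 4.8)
The plan is to reduce everything to the two facts already assembled above: the Dauns--Hofmann isomorphism $\theta_A : C^b(\Pm(A)) \to ZM(A)$ of~(\ref{e:dh}), and the equivalence (stated immediately after~(\ref{e:dh})) that $A$ carries a $C_0(Y)$-algebra structure precisely when there is a continuous map $\phi_A : \Pm(A) \to Y$. Granting this, to prove the theorem it suffices to (a) produce such a continuous map into the appropriate base space $Y$, (b) identify the resulting fibre ideals $I_p = \bigcap\{P \in \Pm(A): \phi_A(P) = p\}$, and (c) invoke Proposition~\ref{p:c0xbundles}(i) to conclude that $A$ is the section algebra of the associated upper-semicontinuous C$^{\ast}$-bundle with $A_p = A/I_p$ and $\pi_p$ the quotient map. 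The faithfulness axiom (i) of a C$^{\ast}$-bundle will in each case follow from $\bigcap_{p} G_p = \bigcap_{P \in \Pm(A)} P = \{0\}$, since a C$^{\ast}$-algebra is semisimple.

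For case (i), when $\Gl(A)$ is locally compact, I would simply take $\phi_A = \rho_A : \Pm(A) \to \Gl(A)$, which was already noted to be continuous. For $p \in \Gl(A)$ the condition $\rho_A(P) = p$ describes exactly the $\approx$-class $p$, so $I_p = \bigcap\{P' : \rho_A(P') = p\} = G_p$ by the very definition of the Glimm ideal. Hence $A_p = A/G_p$ and $\pi_p = q_p$, and Proposition~\ref{p:c0xbundles}(i) finishes this case.

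For case (ii), the point is that the definition of a $C_0(Y)$-algebra requires $Y$ to be locally compact, which $\Gl(A)$ need not be. Since $(\Gl(A), \tau_{cr})$ is completely regular Hausdorff it embeds homeomorphically via $\iota$ into the compact (hence locally compact) space $\beta\Gl(A)$, and I would take $\phi_A = \iota \circ \rho_A : \Pm(A) \to \beta\Gl(A)$, continuous as a composite of continuous maps. For $p \in \Gl(A) \subseteq \beta\Gl(A)$ the fibre computation is identical to case (i), giving $A_p = A/G_p$. For $p \in \beta\Gl(A) \setminus \Gl(A)$, however, no primitive ideal satisfies $\phi_A(P) = p$, because $\phi_A$ takes values in the image $\iota(\Gl(A))$; the defining intersection is therefore over the empty set, forcing $I_p = A$ and $A_p = \{0\}$, exactly as claimed. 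Faithfulness and Proposition~\ref{p:c0xbundles}(i) again complete the argument.

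The genuinely substantive content---the existence of $\theta_A$ and the complete regularity of $\Gl(A)$---is already in place, so I do not expect a serious analytic obstacle. The only delicate point is the bookkeeping around local compactness: one must pass to $\beta\Gl(A)$ precisely to meet the standing hypothesis that the base of a $C_0(Y)$-algebra be locally compact, and then verify that this enlargement contributes only zero fibres over the corona $\beta\Gl(A)\setminus\Gl(A)$, which is where the empty-intersection convention for $I_p$ does the work.
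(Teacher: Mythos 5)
The paper does not actually prove Theorem~\ref{t:dh}: it is stated as the classical Dauns--Hofmann theorem with a citation to~\cite{dauns_hofmann}, and the surrounding text takes the isomorphism $\theta_A$ of~(\ref{e:dh}) as the primitive form of that result. So there is no in-paper proof to compare against; judged on its own, your argument is correct, and it is precisely the derivation the paper's preliminaries are set up to support: the equivalence between $C_0(Y)$-algebra structures and continuous maps $\Pm(A) \rightarrow Y$ (valid because $Y$ is locally compact, via~\cite[Proposition C.5]{williams}), Proposition~\ref{p:c0xbundles}(i) for the passage to an upper-semicontinuous bundle, and the identity $I_x = \mu_A(\{f : f(x)=0\})A = \bigcap\{P \in \Pm(A) : \phi_A(P)=x\}$. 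Your choices $\phi_A = \rho_A$ in case (i) and $\phi_A = \iota \circ \rho_A$ in case (ii) are the standard route, and no circularity arises since you assume only $\theta_A$ and complete regularity of $\Gl(A)$, not the bundle statement itself.

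One step deserves to be made airtight: at a corona point $p \in \beta\Gl(A) \backslash \Gl(A)$ the conclusion $A_p = \{0\}$ is not a purely formal consequence of an empty-intersection convention, because $\Gl(A)$ is dense in $\beta\Gl(A)$, so $p$ lies in the closure of the image of $\phi_A$ and one cannot separate $p$ from that image by a function. The honest argument runs through the algebraic description of the fibre ideal: $I_p = \mu_A(\{f : f(p)=0\})A$, which is closed by Cohen--Hewitt factorisation. If $I_p \neq A$, then $A/I_p$ admits an irreducible representation, giving a primitive ideal $P \supseteq I_p$; evaluating~(\ref{e:dh}) at $P$ shows $f(\phi_A(P)) = 0$ for every $f \in C(\beta\Gl(A))$ vanishing at $p$, and since such functions separate $p$ from every other point of the compact Hausdorff space $\beta\Gl(A)$, this forces $\phi_A(P) = p$, contradicting the fact that $\phi_A$ takes values in $\iota(\Gl(A))$. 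Hence $I_p = A$ and $A_p = \{0\}$. This is exactly the content of the convention you invoke, but spelling it out is what justifies applying the displayed identity for $I_x$ when the fibre over $x$ is empty; with that sentence added, your proof is complete.
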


If $\Pm (A)$ is Hausdorff, then being locally compact, it is necessarily completely regular.  Thus $\Pm (A) = \Gl (A)$, both as sets of ideals and topologically.  In this case the Dauns-Hofmann bundle associated with $A$ is precisely the continuous C$^{\ast}$-bundle over $\Pm (A)$ obtained by Fell~\cite[Theorem 2.3]{fell}.  More generally, Lee's theorem~\cite[Theorem 4]{lee} implies that the Dauns-Hofmann bundle of a C$^{\ast}$-algebra $A$ is a continuous C$^{\ast}$-bundle if and only if the complete regularisation map is open.  Note that if this is the case then necessarily $\Gl(A)$ is locally compact.

\begin{definition}
An ideal $I$ of a C$^{\ast}$-algebra $A$ is said to be \emph{primal} if given $n \geq 2$ and ideals $J_1, \ldots ,J_n$ of $A$ such that $J_1J_2 \ldots J_n = \{ 0 \}$, then there is an index $1 \leq i \leq n$ with $J_i \subseteq I$.  A C$^{\ast}$-algebra $A$ is called \emph{quasi-standard} if
\begin{enumerate}
\item[(i)] the Dauns-Hofmann representation of $A$ is a continuous C$^{\ast}$-bundle over $\Gl (A)$, and
\item[(ii)] For each $p \in \Gl (A)$, the Glimm ideal $G_p$ is a primal ideal of $A$.
\end{enumerate}
\end{definition}

For a C$^{\ast}$-algebra $A$ we will denote by $\MP (A)$ its set of minimal (w.r.t. inclusion) primal ideals.  The canonical topology $\tau$ on $\MP (A)$ is the weakest topology such that the norm functions $I \mapsto \norm{ a + I}$ on $\MP (A)$ are continuous for all $a \in A$.  If $A$ is a C$^{\ast}$-algebra for which every $G \in \Gl (A)$ is a primal ideal of $A$, then necessarily we have $\Gl (A) = \MP (A)$ as sets.  From~\cite[Theorem 3.3]{arch_som_qs}, a C$^{\ast}$-algebra $A$ is quasi-standard if and only if $(\Gl (A) , \tau_{cr} ) = ( \MP (A) , \tau )$ (i.e., as sets of ideals and topologically).

There are other equivalent definitions of quasi-standard C$^{\ast}$-algebras, see~\cite[\S 3 and \S 4]{arch_som_qs} for example.  Since primitive ideals are primal, the class of quasi-standard C$^{\ast}$-algebras properly contains the class of C$^{\ast}$-algebras $A$ with $\Pm(A)$ Hausdorff.  In fact, for separable $A$, quasi-standardness of $A$ is equivalent to the condition that $A$ is a continuous $C_0 (X)$-algebra such that there exists a dense subset $D$ of $X$ with $I_x$ primitive for all $x \in D$.

The Glimm space of a C$^{\ast}$-algebra appears as an intermediate step in any representation of a C$^{\ast}$-algebra as a $C_0 (X)$-algebra, due to a certain universal property of the complete regularisation of a topological space. If $X$ is a completely regular space and $\phi : \Pm (A) \rightarrow X$ a continuous map, then $\phi$ induces a continuous map $\psi : \Gl (A) \rightarrow X$ with $\phi = \psi \circ \rho_A$, i.e.,
\[
\xymatrix{
\Pm (A) \ar^{\phi}[rd] \ar^{\rho_A}[d] & \\
\Gl (A) \ar^{\psi}[r] & X
}
\]
commutes.  Conversely, starting with a continuous map $\psi : \Gl (A) \rightarrow X$, we may set $\phi = \psi \circ \rho_A$, so that $\phi : \Pm (A) \rightarrow X$ is continuous.

If in addition $X$ is locally compact, then $A$ is a $C_0 (X)$ algebra if and only if there is a continuous map $\psi_A : \Gl (A) \rightarrow X$. This fact is useful when working with tensor products of C$^{\ast}$-algebras, since by~\cite{m_glimm} we may always construct $\Gl (A \mint B )$ in terms of $\Gl (A)$ and $\Gl (B)$.  The same is not true in general for the spaces $\Pm (-)$ and $\Fac (-)$.

In the remainder of this section we give some technical results on the structure of $C_0 (X)$-algebras and Glimm spaces of C$^{\ast}$-algebras which we will make reference to in subsequent sections.

\begin{lemma}
\label{l:capglimm}
Let $A$ be a $C_0 (X)$-algebra with base map $\phi_A : \mathrm{Prim}(A) \rightarrow X$, and denote by $\psi_A : \mathrm{Glimm}(A) \rightarrow X$ the induced continuous map with the property that $\psi_A \circ \rho_A = \phi_A$.  Then for each $x \in X$,
\[
I_x = \bigcap \{ G \in \mathrm{Glimm}(A ) : \psi_A (G) = x \}.
\]
\end{lemma}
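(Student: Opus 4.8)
The plan is to rewrite both sides of the claimed identity purely as intersections of primitive ideals and then to exploit the factorisation $\phi_A = \psi_A \circ \rho_A$. By the definition recalled above, $I_x = \bigcap \{ P \in \Pm(A) : \phi_A(P) = x \}$, while the right-hand side is the intersection of the Glimm ideals $G_p = \bigcap \{ P \in \Pm(A) : \rho_A(P) = p \}$ taken over those $p \in \Gl(A)$ with $\psi_A(p) = x$. So it suffices to check that these two intersections of primitive ideals coincide, which reduces the statement to a set-theoretic bookkeeping argument about the fibres of $\phi_A$, $\rho_A$ and $\psi_A$.

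The key observation is that, since $\phi_A = \psi_A \circ \rho_A$, a primitive ideal $P$ satisfies $\phi_A(P) = x$ if and only if $\rho_A(P) \in \psi_A^{-1}(x)$. Hence the fibre $\phi_A^{-1}(x) \subseteq \Pm(A)$ decomposes as the disjoint union $\bigcup_{p \in \psi_A^{-1}(x)} \rho_A^{-1}(p)$. First I would record this partition, and then apply the elementary fact that an intersection of ideals indexed by a union of sets equals the intersection, over the blocks, of the block-wise intersections:
\[
I_x = \bigcap_{P \in \phi_A^{-1}(x)} P = \bigcap_{p \in \psi_A^{-1}(x)} \Big( \bigcap_{P \in \rho_A^{-1}(p)} P \Big) = \bigcap_{p \in \psi_A^{-1}(x)} G_p .
\]
Since $\{ G_p : \psi_A(p) = x \} = \{ G \in \Gl(A) : \psi_A(G) = x \}$, this is precisely the asserted formula.

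There is no serious obstacle here; the only point requiring a moment's care is the degenerate case in which no primitive ideal maps to $x$. Because $\rho_A$ is the (surjective) complete regularisation map, $\phi_A^{-1}(x) = \emptyset$ forces $\psi_A^{-1}(x) = \emptyset$ as well, so that both sides reduce to the empty intersection, interpreted as $A$ itself; this is consistent with the equivalent description $I_x = \mu_A(\{ f \in C_0(X) : f(x) = 0 \})A$ and with the vanishing of the corresponding fibre. In every other case the partition argument applies verbatim, and the result follows.
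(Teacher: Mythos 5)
Your proof is correct and follows essentially the same route as the paper: the paper likewise sets $F = \psi_A^{-1}(\{x\})$, observes via $\phi_A = \psi_A \circ \rho_A$ that $\phi_A(P) = x$ iff $\rho_A(P) \in F$, and regroups the intersection $\bigcap_{P \in \phi_A^{-1}(x)} P$ blockwise over the fibres of $\rho_A$ to obtain $\bigcap_{p \in F} G_p$. Your additional remark on the degenerate case $\phi_A^{-1}(x) = \emptyset$ (handled via surjectivity of $\rho_A$) is a correct refinement that the paper leaves implicit.
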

\begin{proof}
Denote by $F = \psi_A^{-1} ( \{ x \} ) \subseteq \mathrm{Glimm} (A)$, so that for $P \in  \mathrm{Prim}(A)$, we have $\phi_A (P) = x $ if and only if $\rho_A (P) \in F$.  Thus
\begin{eqnarray*}
I_x & = & \bigcap \{ P \in \mathrm{Prim}(A) : \rho_A (P) \in F \} \\
& = & \bigcap_{p \in F} \left\lbrace \bigcap \{ P \in \mathrm{Prim} (A) : \rho_A (P) = p \} \right\rbrace \\
& = & \bigcap \{ G \in \mathrm{Glimm} (A) : \psi_A (G) = x \}.
\end{eqnarray*}
\end{proof}

As was the case in~\cite{kaniuth} and~\cite{m_glimm}, we will make use of the fact that the space $\Gl (A)$ may equivalently be constructed (both as a set of ideals and topologically) as complete regularisation of $\Fac (A)$~\cite{kaniuth}.  This is useful when working with tensor products, since there is a continuous retraction of the canonical (homeomorphic) embedding of $\Fac (A) \times \Fac (B)$ into $\Fac (A \mint B )$.  For $I,J \in \Fac (A)$ we will write $I \approx_f J$ to denote the equivalence relation $f(I) = f(J)$ for all $f \in C^b ( \Fac (A) )$, and denote by $\rho_A^f : \Fac (A) \rightarrow \Gl (A)$ the complete regularisation map, so that the restrictions of each to $\Pm (A)$ satisfy $\restr{\approx}{\Pm (A)} = \approx$ and $\restr{\rho_A^f}{\Pm (A)} = \rho_A$.

Lemma~\ref{l:phifac} and Proposition~\ref{p:phifac} below generalise parts of~\cite[lemmas 2.1 and 2.2]{kaniuth} from Glimm ideals to the ideals $I_x$ defined in the $C_0 (X)$-algebra case.

\begin{lemma}
\label{l:phifac}
Let $A$ be a C$^{\ast}$-algebra and $X$ a locally compact Hausdorff space.  Then any continuous map $\phi_A : \mathrm{Prim}(A) \rightarrow X$ has a unique extension to a continuous map $\phi_A^f : \mathrm{Fac}(A) \rightarrow X$. For $I \in \mathrm{Fac}(A)$ and $P \in \mathrm{hull} (I)$ we have $\phi_A^f (I) = \phi_A (P)$.
\end{lemma}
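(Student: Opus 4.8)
The plan is to build the extension as a composition through the Glimm space, using the fact recalled above (following Kaniuth) that $\Gl(A)$ is simultaneously the complete regularisation of $\Pm(A)$ via $\rho_A$ and of $\Fac(A)$ via $\rho_A^f$, with $\restr{\rho_A^f}{\Pm(A)} = \rho_A$. Since $X$ is locally compact Hausdorff it is completely regular, so the universal property of the complete regularisation applied to the continuous map $\phi_A : \Pm(A) \to X$ supplies a continuous map $\psi_A : \Gl(A) \to X$ with $\phi_A = \psi_A \circ \rho_A$. I would then \emph{define} $\phi_A^f := \psi_A \circ \rho_A^f : \Fac(A) \to X$. This is continuous as a composition of continuous maps, and restricts on $\Pm(A)$ to $\psi_A \circ \rho_A = \phi_A$ because $\restr{\rho_A^f}{\Pm(A)} = \rho_A$; so existence of a continuous extension is immediate.

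The substance of the statement is the explicit value $\phi_A^f(I) = \phi_A(P)$ for $P \in \mathrm{hull}(I)$, and the key step is the claim that $\rho_A^f(I) = \rho_A(P)$ whenever $I \in \Fac(A)$ and $P \in \mathrm{hull}(I)$. To see this I would argue in the hull--kernel topology on $\Fac(A)$: the containment $I \subseteq P$ says precisely that $P$ lies in the closure of the singleton, i.e. $P \in \overline{\{I\}}$. For any $f \in C^b(\Fac(A))$, continuity forces $f(P) \in f(\overline{\{I\}}) \subseteq \overline{\{f(I)\}} = \{f(I)\}$, since $\mathbb{C}$ is Hausdorff; hence $f(P) = f(I)$. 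Thus $I \approx_f P$, that is $\rho_A^f(I) = \rho_A^f(P)$, and as $P \in \Pm(A)$ this equals $\rho_A(P)$. Applying $\psi_A$ gives $\phi_A^f(I) = \psi_A(\rho_A^f(I)) = \psi_A(\rho_A(P)) = \phi_A(P)$, which is the asserted formula; in particular $\phi_A$ is constant on $\mathrm{hull}(I)$ (nonempty, since factor ideals are proper and hence contained in a primitive ideal), so the formula is consistent.

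Uniqueness I would obtain from density. Since the intersection of all primitive ideals is the Jacobson radical $\{0\}$, the hull--kernel closure of $\Pm(A)$ in $\Fac(A)$ is $\{J \in \Fac(A) : \{0\} \subseteq J\} = \Fac(A)$, so $\Pm(A)$ is dense in $\Fac(A)$. As $X$ is Hausdorff, any two continuous maps $\Fac(A) \to X$ that agree on the dense subset $\Pm(A)$ must coincide, giving uniqueness of the extension. I expect the one genuinely non-formal point to be the identification $\rho_A^f(I) = \rho_A(P)$ for $P \in \mathrm{hull}(I)$; once this is in place, continuity, the restriction property, and uniqueness follow formally from the universal property of complete regularisation and the density of $\Pm(A)$ in $\Fac(A)$. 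The only care needed is to keep track that $\approx_f$ on $\Fac(A)$ restricts to $\approx$ on $\Pm(A)$, which is exactly what legitimises $\rho_A^f(P) = \rho_A(P)$.
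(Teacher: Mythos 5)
Your proof is correct, but it reaches the extension by a genuinely different route than the paper. The paper invokes Lazar's theorem (\cite[Lemma 3.1]{lazar_tensor}) to obtain the unique continuous extension $\tilde{\phi}_A : \mathrm{prime}(A) \to X$ and simply restricts it to $\Fac(A)$, whereas you factor through the Glimm space: using Kaniuth's identification of $\Gl(A)$ as the complete regularisation of $\Fac(A)$ with $\restr{\rho_A^f}{\Pm(A)} = \rho_A$, together with the universal property (legitimate here since locally compact Hausdorff implies completely regular), you set $\phi_A^f = \psi_A \circ \rho_A^f$. For the formula $\phi_A^f(I) = \phi_A(P)$, both arguments rest on the same topological observation that $P \in \mathrm{hull}(I)$ means $P \in \overline{\{I\}}$; the paper applies continuity of $\phi_A^f$ into the Hausdorff space $X$ directly, while you apply it one level down, to each $f \in C^b(\Fac(A))$ into $\mathbb{C}$, deducing $I \approx_f P$ and hence $\rho_A^f(I) = \rho_A(P)$ before composing with $\psi_A$ --- a slightly longer but equally valid derivation. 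Your uniqueness argument (density of $\Pm(A)$ in $\Fac(A)$, because the kernel of $\Pm(A)$ is zero, plus Hausdorffness of $X$) is exactly the paper's. What each approach buys: the paper's is shorter given Lazar's result and yields the extension on all of $\mathrm{prime}(A)$, not just $\Fac(A)$; yours avoids citing Lazar's extension theorem entirely, staying within the preliminaries the paper has already set up, and it makes explicit the compatibility $\phi_A^f = \psi_A \circ \rho_A^f$, which the paper only recovers implicitly (via uniqueness) when it later juggles $\phi_A$, $\psi_A$ and $\phi_A^f$ together, as in Lemma~\ref{l:capglimm} and Proposition~\ref{p:phideltaglimm}.
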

\begin{proof}
By~\cite[Lemma 3.1]{lazar_tensor} there is a unique continuous map $\tilde{\phi}_A : \mathrm{prime}(A) \rightarrow X$ extending $\phi_A$.  Set $\phi_A^f = \restr{\tilde{\phi}_A}{\mathrm{Fac}(A)}$, then $\phi_A^f$ is continuous since $\tilde{\phi}_A$ is.  Uniqueness follows from the fact that $\Pm (A)$ is dense in $\Fac (A)$.

Take $I \in \mathrm{Fac}(A)$ and $P \in \mathrm{hull} (I)$, so that $P \in \mathrm{cl} \{ I \}$. Then since  $\phi_A^f$ is continuous and extends $\phi_A$, we necessarily have that $\phi_A^f (I) = \phi_A^f (P) = \phi_A (P)$.
\end{proof}
\begin{proposition}
\label{p:phifac}
Let $A$ be a C$^{\ast}$-algebra and $X$ a locally compact Hausdorff space.  
\begin{enumerate}
\item[(i)] $A$ is a $C_0 (X)$-algebra if and only if there exists a continuous map $\phi_A^f : \mathrm{Fac}(A) \rightarrow X$,
\item[(ii)] For $x \in \mathrm{Im} \phi_A$, $I_x = \bigcap \{ I \in \mathrm{Fac}(A) : \phi_A^f (I) = x \}$,
\item[(iii)] For $I \in \Fac (A)$ and $x \in X$, we have $I \supseteq I_x$ if and only if $\phi_A^f (I) = x$.
\end{enumerate}
\end{proposition}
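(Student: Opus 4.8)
The plan is to derive all three parts from Lemma~\ref{l:phifac} together with two standard facts about the hull--kernel topology: that every closed two-sided ideal of a C$^{\ast}$-algebra is the intersection of the primitive ideals containing it (semisimplicity of C$^{\ast}$-algebras), and that $\phi_A^{-1}(\{x\})$ is closed in $\Pm(A)$ whenever $X$ is Hausdorff and $\phi_A$ is continuous. Part~(i) is then almost immediate. For the forward implication, if $A$ is a $C_0(X)$-algebra then the equivalence recorded after~\eqref{e:dh} furnishes a continuous map $\phi_A : \Pm(A) \to X$, and Lemma~\ref{l:phifac} extends it to a continuous $\phi_A^f : \Fac(A) \to X$. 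Conversely, given a continuous $\phi_A^f : \Fac(A) \to X$, one restricts to the subspace $\Pm(A) \subseteq \Fac(A)$ (the hull--kernel topology on $\Pm(A)$ being the subspace topology inherited from $\Fac(A)$); the restriction is continuous, and the same equivalence shows $A$ is a $C_0(X)$-algebra.

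The technical heart lies in establishing, for each $x \in X$, the identity $\mathrm{hull}(I_x) \cap \Pm(A) = \phi_A^{-1}(\{x\})$. Indeed $I_x = \bigcap\{P \in \Pm(A) : \phi_A(P) = x\}$, so $\{P \in \Pm(A) : P \supseteq I_x\}$ is precisely the hull--kernel closure of $\phi_A^{-1}(\{x\})$ in $\Pm(A)$; since $X$ is Hausdorff and $\phi_A$ is continuous, the set $\phi_A^{-1}(\{x\})$ is already closed, which yields the identity. I expect this to be the main obstacle, as it is the one place where Hausdorffness of $X$ is genuinely needed and where one must control \emph{which} primitive ideals can contain $I_x$.

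With this identity in hand, part~(iii) splits cleanly. If $\phi_A^f(I) = x$, then by Lemma~\ref{l:phifac} every $P \in \mathrm{hull}(I)$ satisfies $\phi_A(P) = x$, so $\mathrm{hull}(I) \subseteq \phi_A^{-1}(\{x\})$; writing $I = \bigcap \mathrm{hull}(I)$ (a factorial ideal being the intersection of the primitives containing it) and using that intersecting over a smaller family enlarges the ideal, one obtains $I \supseteq I_x$. Conversely, if $I \supseteq I_x$, choose any $P \in \mathrm{hull}(I)$ (nonempty, as $I$ is proper); then $P \supseteq I \supseteq I_x$, so the identity above gives $\phi_A(P) = x$, and Lemma~\ref{l:phifac} yields $\phi_A^f(I) = \phi_A(P) = x$.

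Finally, part~(ii) follows by two inclusions. Writing $J_x = \bigcap\{I \in \Fac(A) : \phi_A^f(I) = x\}$, one has $J_x \subseteq I_x$ because the defining family for $J_x$ contains $\{P \in \Pm(A) : \phi_A(P) = x\}$ (primitive ideals are factorial and $\phi_A^f$ extends $\phi_A$), and intersecting over a larger family gives a smaller ideal; the reverse inclusion $I_x \subseteq J_x$ is exactly the forward direction of part~(iii), since every $I$ in the $J_x$-family satisfies $I \supseteq I_x$. The hypothesis $x \in \mathrm{Im}\,\phi_A$ serves only to guarantee that both families are nonempty, so that $I_x$ and $J_x$ are proper; the inclusions themselves do not require it.
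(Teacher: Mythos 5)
Your proof is correct and follows essentially the same route as the paper's: both rest on Lemma~\ref{l:phifac} together with the facts that a closed ideal is the intersection of the primitive ideals containing it and that $P \supseteq I_x$ forces $\phi_A (P) = x$. The only differences are organisational: you derive (ii) from the forward direction of (iii) rather than the reverse, and you prove the key identity $\mathrm{hull}(I_x) \cap \Pm (A) = \phi_A^{-1} ( \{ x \} )$ directly from closedness of $\phi_A^{-1}(\{x\})$ in the hull--kernel topology, where the paper simply cites this fact from the literature.
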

\begin{proof}
(i) If $A$ is a $C_0 (X)$-algebra with base map $\phi_A : \mathrm{Prim}(A) \rightarrow X$ then $\phi_A$ has a unique continuous extension to a map $\phi_A^f : \mathrm{Fac}(A) \rightarrow X$ by Lemma~\ref{l:phifac}.  Conversely, if such a map exists then setting $\phi_A = \restr{\phi_A^f}{\mathrm{Prim}(A)}$ defines a base map.

(ii) For $x \in \mathrm{Im} \phi_A$, we have
\[
I_x = \bigcap \{ P \in \mathrm{Prim}(A) : \phi_A (P) = x \} \supseteq \bigcap \{ I \in \mathrm{Fac}(A) : \phi_A^f (I) = x \}
\]
since $\phi_A^f$ extends $\phi_A$.  Take $I \in \mathrm{Fac}(A)$ such that $\phi_A^f (I) = x$.  Then for all $P \in \mathrm{hull} (I)$, $\phi_A (P) =x$, hence $P \supseteq I_x$ for all such $P$.  Hence $I_x \subseteq k \left( \mathrm{hull}(I) \right) = I$ for all $I \in \mathrm{Fac}(A)$ with $\phi_A^f (I) = x$, so that
\[
I_x \subseteq \bigcap \{ I \in \mathrm{Fac}(A) : \phi_A^f (I) = x \}.
\]
as required.

(iii) It follows from (ii) that if $\phi_A^f (I) = x$ then $I \supseteq I_x$.  Now suppose that $I \supseteq I_x$ and take $P \in \mathrm{hull} (I)$.  Then $P \supseteq I_x$ and so $\phi_A (P) = x$ by~\cite[p. 74]{arch_som_mult}.   It then follows from Lemma~\ref{l:phifac} that
\[
\phi_A^f (I) = \phi_A (P) = x.
\]
\end{proof}

The following lemma identifies when a subalgebra of a $C_0 (X)$-algebra can be identified with a subbundle of the associated upper-semicontinuous C$^{\ast}$-bundle.
\begin{lemma}
\label{l:subcx}
Let $(B,X,\mu_B)$ be a $C_0 (X)$-algebra, and $\iota: A \rightarrow B$ a $\ast$-monomorphism with the property that $\mu_B (f) \iota(a) \in \iota (A)$ for all $a \in A$ and $f \in C_0 (X)$. Then
\begin{enumerate}
\item[(i)] There is a $\ast$-homomorphism $\mu_A : C_0 (X) \rightarrow ZM(A)$ with the property that
\begin{equation}\label{e:iota}
\iota ( \mu_A (f) a ) = \mu_B (f) \iota (a).
\end{equation}
Hence $(A,X, \mu_A )$ is a $C_0 (X)$-algebra, and $\iota$ is a $C_0(X)$-module map, 
\item[(ii)] For $x \in X$ we let $I_x = \mu_A(\set{f \in C_0 (X) : f(x)=0})A$ and $J_x = \mu_B ( \set{f \in C_0 (X) : f(x) = 0} )B$. We have $\iota (I_x) = J_x \cap \iota (A)$,
\item[(iii)] If $B$ is a continuous $C_0 (X)$-algebra then so is $A$.
\end{enumerate}
\end{lemma}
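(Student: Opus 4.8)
The plan is to build $\mu_A$ by transporting $\mu_B$ across the embedding $\iota$, then to identify the fibre ideals $I_x$ and $J_x$ by means of a \emph{localised approximate identity}, and finally to read off continuity from an isometric identification of fibres.

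For (i), since $\iota$ is a $\ast$-monomorphism it is isometric onto the C$^{\ast}$-subalgebra $\iota(A) \subseteq B$, so $\iota^{-1} : \iota(A) \to A$ is a well-defined $\ast$-isomorphism. The hypothesis $\mu_B(f)\iota(a) \in \iota(A)$ lets me set
\[
\mu_A(f) a := \iota^{-1}\bigl( \mu_B(f) \iota(a) \bigr), \qquad f \in C_0(X),\ a \in A,
\]
which is exactly (\ref{e:iota}). The verifications are then routine transport arguments: linearity and multiplicativity of $f \mapsto \mu_A(f)$ follow from those of $\mu_B$ together with injectivity of $\iota$; the identity $\mu_A(f)(ab) = (\mu_A(f)a)b = a(\mu_A(f)b)$ (so each $\mu_A(f)$ is a central double centraliser, i.e.\ lies in $ZM(A)$) follows by applying $\iota$ and using $\iota(a)\mu_B(f) = \mu_B(f)\iota(a)$, valid since $\mu_B(f) \in ZM(B)$; and $\mu_A(\bar f) = \mu_A(f)^{\ast}$ follows because $\iota,\iota^{-1}$ are $\ast$-maps and $\mu_B(\bar f) = \mu_B(f)^{\ast}$. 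For nondegeneracy I take an approximate identity $(e_\lambda)$ of $C_0(X)$; since $\mu_B$ is nondegenerate, $\mu_B(e_\lambda)\iota(a) \to \iota(a)$, hence $\iota(\mu_A(e_\lambda)a) \to \iota(a)$, and isometry of $\iota$ gives $\mu_A(e_\lambda)a \to a$. Thus $\overline{\mu_A(C_0(X))A} = A$, so $(A,X,\mu_A)$ is a $C_0(X)$-algebra and $\iota$ is a $C_0(X)$-module map.

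The substantive point is (ii), and the key device is the following description of the fibre ideals. Fix $x$, and let $C_x = \{ f \in C_0(X) : f(x) = 0 \}$, an ideal of $C_0(X)$ with approximate identity $(u_\lambda)$. I claim that for $c \in B$ one has $c \in J_x$ if and only if $\mu_B(u_\lambda)c \to c$ (and identically for $A$, using the same $(u_\lambda)$). Indeed $\mu_B(u_\lambda)c \in \mu_B(C_x)B \subseteq J_x$, giving one direction by closedness of $J_x$; conversely, recall $J_x = \overline{\mu_B(C_x)B}$, and on a generator $c = \mu_B(g)b$ with $g \in C_x$ we get $\mu_B(u_\lambda)c = \mu_B(u_\lambda g)b \to \mu_B(g)b = c$ since $u_\lambda g \to g$, so the general case follows by density and $\|\mu_B(u_\lambda)\| \le 1$. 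Granting this, the inclusion $\iota(I_x) \subseteq J_x \cap \iota(A)$ is immediate from (\ref{e:iota}). For the reverse, take $a \in A$ with $\iota(a) \in J_x$; then $\mu_B(u_\lambda)\iota(a) \to \iota(a)$, i.e.\ $\iota(\mu_A(u_\lambda)a) \to \iota(a)$, whence $\mu_A(u_\lambda)a \to a$ by isometry of $\iota$, and the characterisation applied to $A$ gives $a \in I_x$. As $\iota$ is injective, this yields $J_x \cap \iota(A) = \iota(I_x)$.

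Part (iii) then drops out. By $\iota(I_x) \subseteq J_x$ the map $\iota$ descends to a $\ast$-homomorphism $\bar\iota_x : A/I_x \to B/J_x$, $a + I_x \mapsto \iota(a) + J_x$, and (ii) shows it is injective: $\iota(a) \in J_x$ forces $\iota(a) \in J_x \cap \iota(A) = \iota(I_x)$, so $a \in I_x$. Being an injective $\ast$-homomorphism, $\bar\iota_x$ is isometric, so $\|a + I_x\| = \|\iota(a) + J_x\|$ for every $a \in A$ and $x \in X$; that is, the fibre norm function $N_A(a)$ equals $N_B(\iota(a))$. Since $A$ is a $C_0(X)$-algebra (part (i)) and hence an upper-semicontinuous bundle by Proposition~\ref{p:c0xbundles}(i), while $N_B(\iota(a)) \in C_0(X)$ by continuity of $B$, we conclude $N_A(a) \in C_0(X)$ for all $a$, so $A$ is continuous. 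The main obstacle is the reverse inclusion in (ii): attacking it directly through the induced map on quotients is circular, because the injectivity of that map is equivalent to the statement to be proved, and the localised approximate identity $(u_\lambda)$ is precisely what breaks this circularity.
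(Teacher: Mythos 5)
Your proof is correct, and parts (i) and (iii) essentially coincide with the paper's argument: in (i) you transport $\mu_B$ across $\iota$ by hand, verifying directly that each $\mu_A(f)$ is a central double centraliser, where the paper packages the same construction by viewing $\iota(A)$ as a closed ideal of the C$^{\ast}$-subalgebra $\iota(A)+\mu_B(C_0(X)) \subseteq M(B)$ and invoking Busby's extension theorem to obtain a $\ast$-homomorphism $\sigma$ extending $\iota^{-1}$, then setting $\mu_A = \sigma \circ \mu_B$; and your (iii) --- injectivity of the induced map $A/I_x \rightarrow B/J_x$ from (ii), hence isometry --- is the paper's identification $A/I_x \cong (\iota(A)+J_x)/J_x \subseteq B/J_x$ in different clothing. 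The genuine divergence is in (ii). The paper proves $J_x \cap \iota(A) \subseteq \iota(I_x)$ quantitatively: given $\varepsilon > 0$ it uses upper semicontinuity of $y \mapsto \norm{\iota(a)+J_y}$ (available via Proposition~\ref{p:c0xbundles}(i)) to find a neighbourhood $U$ of $x$ on which this norm is $< \varepsilon$, chooses a bump function $f$ with $f(x)=1$ vanishing off $U$, and estimates $\norm{a+I_x} \leq \norm{\mu_B(f)\iota(a)} \leq \varepsilon$. You instead characterise membership in $J_x$ by $\mu_B(u_\lambda)c \rightarrow c$ for an approximate identity $(u_\lambda)$ of the ideal $C_x = \set{f \in C_0(X) : f(x)=0}$, and transport this through the isometry $\iota$; note that only the easy direction of your characterisation is needed on the $A$ side. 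Your route is softer --- it avoids upper semicontinuity of the norm functions entirely and would work verbatim for the ideal of $A$ generated by any closed ideal of $C_0(X)$, not just the maximal ones $C_x$ --- while the paper's route yields the explicit $\varepsilon$-estimate on $\norm{a+I_x}$. Two minor bookkeeping points: in (i) you should record the bound $\norm{\mu_A(f)a} = \norm{\mu_B(f)\iota(a)} \leq \norm{f}\norm{a}$ so that the centraliser is bounded, and your conclusion that $\mu_A(C_0(X))A$ is dense in $A$ gives the equality $\mu_A(C_0(X))A = A$ required by the definition only after an appeal to the Cohen--Hewitt factorisation theorem, a step the paper makes explicit.
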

\begin{proof}
(i) Since $\iota ( A) $ is closed under multiplication by $\mu_B (C_0 (X))$,  $\iota (A)$is a closed two-sided ideal of $\iota (A)+ \mu_B (C_0 (X))$, a C$^{\ast}$-subalgebra of $M(B)$.  Then by~\cite[Proposition 3.7(i)]{busby}, there is a $\ast$-homomorphism $\sigma: \iota (A) + \mu_B ( C_0 (X)) \rightarrow M(A)$ extending $\iota^{-1}$ on $\iota (A)$.  Moreover, for $f \in C_0 (X)$ and $a \in A$, we have
\[
(\sigma \circ \mu_B )(f) a = \sigma ( \mu_B (f) \iota (a) ) = \sigma ( \iota (a) \mu_B (f) ) = a (\sigma \circ \mu_B )(f),
\]
since $\sigma \circ \iota$ is the identity on $A$.  As in~\cite[(3.1)]{m_glimm}, we see that $(\sigma \circ \mu_B)(C_0 (X) ) \subseteq ZM(A)$.  Thus we get a $\ast$-homomorphism $\mu_A = \sigma \circ \mu_B : C_0 (X) \rightarrow ZM(A)$.  To see that $\mu_A$ is non-degenerate, let $(f_{\lambda})$ be an approximate identity for $C_0 (X)$.  Then since $\mu_B$ is non-degenerate, $\mu_B (f_{\lambda} )b \rightarrow b$  for all $b \in B$.  In particular, for all $a \in A$ we have 
\[
\mu_A ( f_{\lambda} ) a = \sigma( \mu_B ( f_{\lambda} ) \iota (a) ) \rightarrow \sigma ( \iota (a) ) = a,
\]
which shows that $\mu_A (C_0 (X) )A$ is dense in $A$.  By the Cohen-Hewitt factorisation Theorem, $\mu_A (C_0 (X) )A = A$.  

To see that~(\ref{e:iota}) holds, note that for $f \in C_0 (X)$ and $a \in A$, we have
\begin{eqnarray*}
\iota ( \mu_A (f) a ) & = & \iota \left( (\sigma \circ \mu_B )(f) a \right) \\
& = & \iota \left( \sigma ( \mu_B (f) \iota (a) ) \right) \\
& = & \mu_B (f) \iota (a)
\end{eqnarray*}

Thus $(A, X , \mu_A )$ has the required properties.

(ii)
The inclusion $\iota (I_x) \subseteq J_x \cap \iota (A)$ follows from~(\ref{e:iota}).  Now let $a \in J_x \cap \iota (A)$ and $\varepsilon > 0$.  By upper-semicontinuity there is a neighbourhood $U$ of $x$ in $X$, with compact closure $\overline{U}$, such that $\norm{\iota (a)+J_y} < \varepsilon$ for all $y \in U$.  Choose a continuous $f: X \rightarrow [0,1]$ such that $f(x)=1$ and $f(X \backslash U) \equiv 0$, then $f \in C_0 (X)$ since $\overline{U}$ is compact.  Then 
\[
(1-\mu_A (f) ) a + I_x = (a-f(x)a)+I_x = 0+I_x,
\]
so that $(1-\mu_A (f))a \in I_x$.  

Now we have
\[
\norm{(1-\mu_A (f)) a-a} = \norm{\mu_A (f)a} = \norm{\sigma (\mu_B (f) \iota(a))} = \norm{ \mu_B (f) \iota(a)},
\] 
since  $\sigma$ is injective on $\iota (A)$.  Moreover,
\begin{eqnarray*}
\norm{\mu_B (f) \iota(a)} = \sup_{y \in X} \norm{\mu_B(f) \iota (a) +J_y} &=&\sup_{y\in X} \norm{f(y)( \iota (a)+J_y)} \\
&=& \sup_{y \in U} \abs{f(y)} \norm{\iota (a) + J_y} \\
& \leq & \sup_{y \in U} \norm{\iota (a)+J_y} \leq \varepsilon.
\end{eqnarray*}
Combining these facts, we see that
\[
\norm{a+I_x} \leq \norm{(1-\mu_A (f))a-a} = \norm{\mu_B (f) \iota (a)} \leq \varepsilon.
\]
Since $\varepsilon>0$ was arbitrary and $I_x$ closed, $a \in I_x$.

(iii)    By~\cite[Corollary 1.8.4]{dix} and by part (ii) we may identify
\[
\frac{A}{I_x} = \frac{\iota (A)}{\iota (I_x)} = \frac{\iota (A)}{J_x \cap \iota (A)} \equiv \frac{\iota (A)+J_x}{J_x} \subseteq \frac{B}{J_x}.
\] 
Hence for $a \in A$ and $x \in X$, $\norm{a+I_x} = \norm{\iota (a)+J_x}$.  Since norm functions of elements of $B$ are continuous on $X$, the same is true for $A$.
\end{proof}
\section{Tensor Products of C$^{\ast}$-bundles}
\label{s:tensorbundle}

For a C$^{\ast}$-algebra $A$ we let $\mathrm{Id}'(A)$ be the set of proper closed two-sided ideals of $A$.  For two C$^{\ast}$-algebras $A$ and $B$ let $A \mint B$ be their minimal tensor product.  Define  maps $\Phi, \Delta : \mathrm{Id}' (A) \times \mathrm{Id}' (B) \rightarrow \mathrm{Id}' ( A \otimes_{\alpha} B)$ via
\begin{eqnarray}
\label{e:phi} \Phi (I,J) &=& \mathrm{ker} ( q_I \otimes q_J ) \\
\label{e:delta} \Delta (I,J) &=& I \otimes_{\alpha} B + A \otimes_{\alpha} J,
\end{eqnarray}
where $q_I : A \rightarrow A/ I$ and $q_J : B \rightarrow B/J$ are the quotient $\ast$-homomorphisms.  We remark that by injectivity of the minimal tensor product, $\Delta (I,J)$ is precisely the closure in $A \mint B$ of the kernel of the algebraic $\ast$-homomorphism $q_I \odot q_J : A \odot B \rightarrow (A/I) \odot (B/J)$.

Starting with $I \in \mathrm{Id}' (A \mint B )$, we define $I^A \in \mathrm{Id}'(A)$ and $I^B \in \mathrm{Id}' (B)$ via
\[
I^A= \{ a \in A : a \mint B \subseteq I \}, I^B = \{ b \in B : A \mint b \subseteq I \}.
\]
Defining $\Psi (I) = (I^A,I^B)$ gives a map $\Psi : \mathrm{Id}' (A \mint B ) \rightarrow \mathrm{Id}'(A) \times \mathrm{Id}' (B)$.  Moreover, we have
\[
(\Psi \circ \Phi )(I,J) = ( \Psi \circ \Delta )(I,J) = (I,J)
\]
for all $(I,J) \in \mathrm{Id}'(A) \times \mathrm{Id}' (B)$ by~\cite[Theorem 2.6]{lazar_tensor} and~\cite[Lemma 4.6(ii)]{m_glimm}.
\begin{definition}
Let $A$ and $B$ be C$^{\ast}$-algebras.  Their minimal tensor product $A \mint B$ is said to satisfy Tomiyama's property (F) if $\Delta (I,J) = \Phi (I,J)$ for all $(I,J) \in \mathrm{Id}'(A) \times \mathrm{Id}'(B)$.
\end{definition}
Our definition of property (F) is equivalent to Tomiyama's original definition~\cite{tomiyama}, i.e. that the tensor product states separate the closed ideals of $A \mint B$, by~\cite[Theorem 5]{tomiyama}.  

\begin{definition}
A sequence of $\ast$-homomorphisms between C$^{\ast}$-algebras of the form
\begin{equation}\label{e:exact}
\xymatrix{
0 \ar[r] & J \ar^{\iota}[r] & B \ar^{q}[r] & (B/J) \ar[r] & 0,
}
\end{equation}
where $J$ is an ideal of $B$, $\iota$ the inclusion of $J$ into $B$, and $q$ the quotient $\ast$-homomorphism, is called a \emph{short exact sequence of C$^{\ast}$-algebras}.  We say that a C$^{\ast}$-algebra $A$ is \emph{exact} if the sequence
\begin{equation}
\label{e:exact2}
\xymatrix{
0 \ar[r] & A \mint J \ar^{\mathrm{id} \otimes \iota}[r] & A \mint B \ar^{\mathrm{id} \otimes q}[r] & A \mint (B/J) \ar[r] & 0
}
\end{equation}
is exact for every short exact sequence of the form~(\ref{e:exact}).
\end{definition}

A C$^{\ast}$-algebra $A$ is exact if and only if $A \mint B$ has property (F) for all C$^{\ast}$-algebras $B$.  Clearly if $A$ is such that $A \mint B$ satisfies (F) for all $B$, then for any short exact sequence of the form (\ref{e:exact}), exactness of (\ref{e:exact2}) follows from the fact that $\Delta (\{ 0 \} , J ) = \Phi ( \{ 0 \}, J )$.  The converse is shown in~\cite[Proposition 2.17]{blanch_kirch}.  We will make use of this equivalence repeatedly in the sequel.

The following theorem lists some of the known properties of tensor products of C$^{\ast}$-bundles, and their relation to the maps $\Phi$ and $\Delta$.

\begin{theorem}
\label{t:kirch_wass1}
Let $\mathscr{A} = ( X , A , \pi_x : A \rightarrow A_x )$ and $\mathscr{B} = (Y , B , \sigma_y : B \rightarrow B_y )$ be C$^{\ast}$-bundles over locally compact spaces $X$ and $Y$ respectively. Then
\begin{enumerate}
\item[(i)] the fibrewise tensor product $\mathscr{A} \mint \mathscr{B}$ of $\mathscr{A}$ and $\mathscr{B}$ defined via
\[
\mathscr{A} \mint \mathscr{B} = \left( X \times Y , A \mint B , \pi_x \otimes \sigma_y : A \mint B \rightarrow A_x \mint B_y \right)
\]
is a C$^{\ast}$-bundle over $X \times Y$~\cite[p.136,137]{arch_cb}.
\end{enumerate}
If in addition $\mathscr{A}$ and $\mathscr{B}$ are continuous, then
\begin{enumerate}
\item[(ii)] $\mathscr{A} \mint \mathscr{B}$ is lower-semicontinuous over $X \times Y$~\cite[Proposition 4.9]{kirch_wass}, and
\item[(iii)] for $(x_0,y_0) \in X \times Y$, the function
\[
(x,y) \mapsto \norm{ (\pi_x \otimes \sigma_y )(c)}
\]
is continuous for all $c \in A \mint B$ at $(x_0 , y_0 )$ if and only if
\[
\ker ( \pi_{x_0} \otimes \sigma_{y_0} ) = \ker ( \pi_{x_0} ) \mint B + A \mint \ker ( \sigma_{y_0} ),
\]
that is, if and only if $\Phi ( \ker (\pi_{x_0} ) , \ker ( \sigma_{y_0} ) ) = \Delta ( \ker ( \pi_{x_0} ) , \ker( \sigma_{y_0} ))$~\cite[Theorem 3.3]{arch_cb}.
\end{enumerate}

\end{theorem}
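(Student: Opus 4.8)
The plan is to treat (i) as a direct verification of the two C$^{\ast}$-bundle axioms, and then to obtain (ii) and (iii) by comparing the fibrewise norm function, governed by $\Phi$, against the canonical upper-semicontinuous $C_0(X\times Y)$-structure on $A\mint B$, which is governed by $\Delta$. Throughout I write $I_x=\ker\pi_x$ and $J_y=\ker\sigma_y$, so that the fibre of the fibrewise product is $A_x\mint B_y=(A\mint B)/\Phi(I_x,J_y)$, while $\Delta(I_x,J_y)\subseteq\Phi(I_x,J_y)$ always.

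For (i), first note that each $\pi_x\otimes\sigma_y$ is surjective onto $A_x\mint B_y$, since the minimal tensor product of the surjections $\pi_x$ and $\sigma_y$ has dense range $A_x\odot B_y$. For faithfulness, choose faithful representations $\tilde\rho_x$ of $A_x$ and $\tilde\sigma_y$ of $B_y$; then $\rho_A=\bigoplus_x(\tilde\rho_x\circ\pi_x)$ and $\rho_B=\bigoplus_y(\tilde\sigma_y\circ\sigma_y)$ are faithful on $A$ and $B$ by the faithful-family axiom, so by injectivity of the minimal norm $\rho_A\otimes\rho_B$ is faithful on $A\mint B$. Since $\rho_A,\rho_B$ are block diagonal, the $(x,y)$-block of $(\rho_A\otimes\rho_B)(c)$ equals $(\tilde\rho_x\otimes\tilde\sigma_y)\big((\pi_x\otimes\sigma_y)(c)\big)$, whence $(\pi_x\otimes\sigma_y)(c)=0$ for all $(x,y)$ forces $c=0$. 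For the module axiom I would use $C_0(X\times Y)\cong C_0(X)\mint C_0(Y)$ and set $\mu_{A\mint B}=\mu_A\otimes\mu_B$, each $\mu_A(f)\otimes\mu_B(g)$ acting as a central multiplier of $A\mint B$; on an elementary tensor $h=f\otimes g$ and $c=a\otimes b$ the identity $\pi_x(\mu_A(f)a)=f(x)\pi_x(a)$ yields $(\pi_x\otimes\sigma_y)(\mu_{A\mint B}(h)c)=h(x,y)(\pi_x\otimes\sigma_y)(c)$, and the general case follows by linearity and density.

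For (ii) and (iii), the key observation is that the same algebra $A\mint B$ carries, via $\mu_{A\mint B}$, a canonical $C_0(X\times Y)$-algebra structure whose fibre ideal at $(x_0,y_0)$ is $I_{(x_0,y_0)}=\Delta(I_{x_0},J_{y_0})$ (because the functions in $C_0(X\times Y)$ vanishing at $(x_0,y_0)$ are generated by $\{f:f(x_0)=0\}\mint C_0(Y)+C_0(X)\mint\{g:g(y_0)=0\}$). By Proposition~\ref{p:c0xbundles}(i) the norm function $(x,y)\mapsto\norm{c+\Delta(I_x,J_y)}$ is therefore upper-semicontinuous for every $c$. Statement (ii) is the complementary fact that, when both factor bundles are continuous, the fibrewise norm $(x,y)\mapsto\norm{(\pi_x\otimes\sigma_y)(c)}$ is lower-semicontinuous, reflecting the expression of the minimal norm as a supremum of matrix coefficients that vary continuously over $X\times Y$; I would take this from \cite[Proposition 4.9]{kirch_wass}. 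For (iii), combine these: since lower-semicontinuity is automatic, continuity at $(x_0,y_0)$ is equivalent to upper-semicontinuity there, and $\Delta(I_x,J_y)\subseteq\Phi(I_x,J_y)$ gives $\norm{c+\Phi(I_x,J_y)}\le\norm{c+\Delta(I_x,J_y)}$ pointwise. Hence if $\Phi(I_{x_0},J_{y_0})=\Delta(I_{x_0},J_{y_0})$, the upper-semicontinuity of the $\Delta$-norm transfers to the fibrewise $\Phi$-norm at $(x_0,y_0)$, giving continuity for every $c$.

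The main obstacle is the converse implication in (iii): that continuity of every $(x,y)\mapsto\norm{(\pi_x\otimes\sigma_y)(c)}$ at $(x_0,y_0)$ forces $\Phi(I_{x_0},J_{y_0})=\Delta(I_{x_0},J_{y_0})$. I would argue contrapositively, choosing $c\in\Phi(I_{x_0},J_{y_0})\setminus\Delta(I_{x_0},J_{y_0})$ so that $\norm{c+\Phi(I_{x_0},J_{y_0})}=0$ while $\norm{c+\Delta(I_{x_0},J_{y_0})}>0$, and then producing a net $(x_i,y_i)\to(x_0,y_0)$ along which $\norm{(\pi_{x_i}\otimes\sigma_{y_i})(c)}$ stays bounded away from $0$, contradicting continuity. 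Making this net explicit is the delicate step and is precisely the content of Archbold's Theorem~3.3 in~\cite{arch_cb}; I would invoke that result rather than reprove it. All three parts are in any case available from~\cite{arch_cb} and~\cite[Proposition 4.9]{kirch_wass}, so it would suffice to cite them, but the sketch above records the underlying mechanism.
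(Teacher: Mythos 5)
Your proposal is correct, and it is consistent with the paper, which in fact offers no proof of this theorem at all: it is stated as a catalogue of known results, with (i) and (iii) attributed to Archbold~\cite{arch_cb} and (ii) to Kirchberg--Wassermann~\cite[Proposition 4.9]{kirch_wass}, exactly the sources you fall back on for the two genuinely hard steps. What you add beyond the paper is sound: the faithfulness argument via block-diagonal direct sums and injectivity of the minimal norm is the standard proof of (i); your identification of the fibre ideal $K_{(x_0,y_0)}$ of the canonical $C_0(X\times Y)$-structure with $\Delta(I_{x_0},J_{y_0})$, via the decomposition of $\set{h\in C_0(X\times Y): h(x_0,y_0)=0}$ as the closed sum of the two obvious ideals, is a direct commutative-algebra alternative to the route the paper itself takes later (Proposition~\ref{p:tensorbundle}(v), proved there through the $\Fac$-space machinery); and your squeeze argument for the easy direction of (iii) -- upper semicontinuity of the $\Delta$-quotient norm dominating the fibrewise $\Phi$-norm, combined with (ii) -- is precisely the mechanism the paper deploys in Theorem~\ref{c:phi=delta}(iii). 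One remark worth recording: the converse in (iii), which you delegate to Archbold, can actually be closed using the paper's own Lemma~\ref{l:nbhd}. Applied to the fibrewise bundle from (i), that lemma gives $\norm{c+\Delta(I_{x_0},J_{y_0})}=\inf_W\sup_{(x,y)\in W}\norm{(\pi_x\otimes\sigma_y)(c)}$, and continuity of every fibrewise norm function at $(x_0,y_0)$ forces this infimum to equal $\norm{(\pi_{x_0}\otimes\sigma_{y_0})(c)}=\norm{c+\Phi(I_{x_0},J_{y_0})}$; since the two quotient norms then agree for all $c\in A\mint B$, the closed ideals $\Delta(I_{x_0},J_{y_0})$ and $\Phi(I_{x_0},J_{y_0})$ coincide, so no net construction is needed.
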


We now introduce an alternative approach to defining a C$^{\ast}$-bundle structure on the tensor product of two (upper-semicontinuous) C$^{\ast}$-bundles, based on the ideal structure of $A \mint B$ rather than the fibrewise tensor product. This construction was considered previously in~\cite{blanch_exact}, in the case where the base spaces are compact, however, we will need additional information on the interplay between the base and structure maps involved.

Suppose that $(A , X , \phi_A )$ is a $C_0(X)$-algebra and $(B,Y, \phi_B )$ a $C_0 (Y) $-algebra, where $\phi_A : \Pm (A) \rightarrow X$ and $\phi_B : \Pm (B) \rightarrow Y$ are their base maps.  Then we get a continuous map $\phi_A \times \phi_B : \Pm (A) \times \Pm (B) \rightarrow X \times Y.$ By a theorem of Lazar~\cite[Theorem 3.2]{lazar_tensor}, we get a unique continuous map $\phi_{\alpha} : \mathrm{Prim}(A \otimes_{\alpha} B ) \rightarrow X \times Y $ such that 
\[
\xymatrix{
\Pm (A) \times \Pm (B) \ar@{^(->}^{\Phi}[r] \ar_{\phi_A \times \phi_B}[rd] & \Pm (A \mint B ) \ar^{\phi_{\alpha}}[d] \\
& X \times Y \\
}
\]
commutes, that is, $\phi_{\alpha} \circ \Phi = \phi_A \times \phi_B$.  Thus, taking $\phi_{\alpha}$ as the base map, $A \mint B$ becomes a $C_0 (X \times Y)$-algebra, $(A \otimes_{\alpha} B , X \times Y , \phi_{\alpha} )$.  

The structure maps $\mu_A : C_0 (X) \rightarrow ZM(A), \mu_B : C_0 (Y) \rightarrow ZM(B)$ and $\mu_{\alpha} : C_0 (X \times Y ) \rightarrow ZM(A \mint B)$ are then uniquely determined by $\phi_A,\phi_B$ and $\phi_{\alpha}$.  We will show in Proposition~\ref{p:tensorbundle} that in fact $\mu_{\alpha}$ may be identified with the map $\mu_A \otimes \mu_B : C_0 (X) \otimes C_0 (Y) \rightarrow ZM(A) \otimes ZM(B) \subseteq ZM(A \mint B)$.

For $x \in X, y \in Y$ define the ideals
\begin{eqnarray*}
I_x & = & \mu_A \left( \{ f \in C_0 (X) : f(x) = 0 \} \right) A \\ &=& \bigcap \{ P \in \mathrm{Prim}(A) : \phi_A (P) = x \} \\
J_y & = & \mu_B \left( \{ g \in C_0 (Y) : g(y) =0 \} \right) B \\ &=& \bigcap \{ Q \in \mathrm{Prim}(A) : \phi_B (Q) = y \} \\
K_{x,y} & = & \mu_{\alpha} \left( \{ h \in C_0 (X \times Y ) : h(x,y) = 0 \} \right) A \otimes_{\alpha} B \\ &=&  \bigcap \{ M \in \mathrm{Prim}(A \otimes_{\alpha} B ) : \phi_{\alpha} (M) = (x,y) \}.\\
\end{eqnarray*}

By~\cite{apt}, there is a canonical injective $\ast$-homomorphism $\iota : M(A) \mint M(B) \rightarrow M(A \mint B )$, and by~\cite[Lemma 3.1]{m_glimm} this map satisfies
\[
\iota ( x \otimes y ) ( a \otimes b ) = (xa) \otimes (yb) \mbox{ and } (a \otimes b)(\iota (x \otimes y ) ) = (ax ) \otimes (by ),
\]
for all elementary tensors $x \otimes y \in M(A) \mint M(B), a \otimes b \in A \mint B$, so that the image $\iota ( ZM(A) \otimes ZM(B) )$ is contained in $ZM(A \mint B )$.  We will suppress mention of $\iota$ and simply consider $ZM(A) \otimes ZM(B) \subseteq ZM(A \mint B )$.
\begin{proposition}
\label{p:tensorbundle}
Suppose $(A, X , \phi_A)$ is a $C_0 (X)$-algebra and $(B , Y , \phi_B )$ a $C_0 (Y)$ algebra.  Then with the above notation: 
\begin{enumerate}
\item[(i)] $A \otimes_{\alpha} B$ is a $C_0 (X \times Y)$-algebra with base map $\phi_{\alpha} : \mathrm{Prim}(A \otimes_{\alpha} B ) \rightarrow X \times Y$ satisfying $\phi_{\alpha} \circ \Phi = \phi_A \times \phi_B$, 
\item[(ii)] The structure map $\mu_{\alpha} : C_0 (X \times Y ) \rightarrow ZM(A \mint B )$ corresponding to $\phi_{\alpha}$ may be identified with $(\mu_A \otimes \mu_B ) \circ \eta$, where $\eta : C_0 (X \times Y ) \rightarrow C_0 (X) \otimes C_0 (Y)$ is the canonical $\ast$-isomorphism and we identify $ZM(A) \otimes ZM(B) \subseteq ZM(A \mint B)$ .
\item[(iii)] Denoting by $\phi_A^f, \phi_B^f$ and $\phi_{\alpha}^f$ the extensions of $\phi_A, \phi_B$ and $\phi_{\alpha}$ to $\mathrm{Fac}(A), \mathrm{Fac}(B)$ and $\mathrm{Fac}(A \otimes_{\alpha} B)$ respectively, we have $\phi_{\alpha}^f \circ \Phi = \phi_A^f \times \phi_B^f$ on $\Fac (A) \times \Fac (B)$,
\item[(iv)] For any $M \in \mathrm{Fac}(A \otimes_{\alpha} B )$ and $(x,y) \in X \times Y$, $\phi_{\alpha}^f (M) = (x,y)$ if and only if $(\phi_A^f \times \phi_B^f )( \Psi (M) ) = (x,y)$,
\item[(v)] For all $(x,y) \in X \times Y$ we have $K_{x,y} = \Delta (I_x , J_y )$.
\end{enumerate}
\end{proposition}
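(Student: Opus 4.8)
The plan is to prove the five assertions in the order (i), (iii), (ii), (v), (iv), since (v) feeds into (iv) while (ii) underlies (v). Part (i) is essentially contained in the discussion preceding the statement: Lazar's theorem~\cite[Theorem 3.2]{lazar_tensor}, applied to the continuous map $\phi_A \times \phi_B : \Pm(A) \times \Pm(B) \to X \times Y$ together with the embedding $\Phi$, produces the unique continuous $\phi_{\alpha}$ with $\phi_{\alpha} \circ \Phi = \phi_A \times \phi_B$, and this continuous base map exhibits $A \otimes_{\alpha} B$ as a $C_0(X \times Y)$-algebra. For part (iii) I would argue directly from Lemma~\ref{l:phifac}: given $(I,J) \in \Fac(A) \times \Fac(B)$, pick primitive $P \in \mathrm{hull}(I)$ and $Q \in \mathrm{hull}(J)$. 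Then $I \subseteq P$ and $J \subseteq Q$ give $\Phi(I,J) \subseteq \Phi(P,Q)$, so $\Phi(P,Q) \in \mathrm{hull}(\Phi(I,J))$, and $\Phi(P,Q)$ is primitive since the minimal tensor product of two irreducible representations is irreducible. Lemma~\ref{l:phifac}, applied in $A \otimes_{\alpha} B$ and in each factor, then yields $\phi_{\alpha}^f(\Phi(I,J)) = \phi_{\alpha}(\Phi(P,Q)) = (\phi_A(P), \phi_B(Q)) = (\phi_A^f(I), \phi_B^f(J))$.

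The crux is part (ii). The Dauns--Hofmann relation~(\ref{e:dh}) shows that the structure map of a $C_0(Z)$-algebra is completely determined by its base map, via $\mu(f) = \theta(f \circ \phi)$; so it suffices to show that $\nu := (\mu_A \otimes \mu_B) \circ \eta$ is a nondegenerate $\ast$-homomorphism $C_0(X \times Y) \to ZM(A \otimes_{\alpha} B)$ whose associated base map is $\phi_{\alpha}$. Nondegeneracy follows from that of $\mu_A$ and $\mu_B$ (and from $\eta$ being an isomorphism), and the image lies in $ZM(A) \otimes ZM(B) \subseteq ZM(A \otimes_{\alpha} B)$. To identify the base map $\phi_{\nu}$ of $\nu$, I would evaluate $\nu(f \otimes g) = \mu_A(f) \otimes \mu_B(g)$ on a product primitive ideal $\Phi(P,Q)$: under $A \otimes_{\alpha} B / \Phi(P,Q) \cong (A/P) \otimes_{\alpha} (B/Q)$ and the Dauns--Hofmann action in each factor, it acts as multiplication by $f(\phi_A(P)) g(\phi_B(Q))$, giving $\phi_{\nu} \circ \Phi = \phi_A \times \phi_B$. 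As $\phi_{\nu}$ is continuous, Lazar's uniqueness forces $\phi_{\nu} = \phi_{\alpha}$, whence uniqueness of the structure map gives $\nu = \mu_{\alpha}$.

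Granting (ii), part (v) is a direct computation. Writing $N_{x,y}$, $M_x$, $M_y$ for the ideals of functions vanishing at $(x,y)$, $x$, $y$ respectively, the isomorphism $\eta$ carries $N_{x,y} = \ker(\mathrm{ev}_{(x,y)})$ onto $\ker(\mathrm{ev}_x \otimes \mathrm{ev}_y) = M_x \otimes C_0(Y) + C_0(X) \otimes M_y$ (commutative algebras have property (F), so $\Phi = \Delta$ here). Substituting $\mu_{\alpha} = \nu$ from (ii),
\[
K_{x,y} = (\mu_A \otimes \mu_B)\bigl( M_x \otimes C_0(Y) + C_0(X) \otimes M_y \bigr)(A \otimes_{\alpha} B).
\]
Since $\{ \mu_A(f) a : f \in M_x,\, a \in A \}$ has dense linear span $I_x$ while $\{ \mu_B(g) b \}$ spans a dense subspace of $B$, the first summand contributes exactly $I_x \otimes_{\alpha} B$, and symmetrically the second contributes $A \otimes_{\alpha} J_y$; hence $K_{x,y} = \overline{I_x \otimes_{\alpha} B + A \otimes_{\alpha} J_y} = \Delta(I_x, J_y)$.

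Finally (iv) is formal. By Proposition~\ref{p:phifac}(iii) applied in $A \otimes_{\alpha} B$, $\phi_{\alpha}^f(M) = (x,y)$ iff $M \supseteq K_{x,y} = \Delta(I_x, J_y) = I_x \otimes_{\alpha} B + A \otimes_{\alpha} J_y$, which by the definition of $M^A, M^B$ holds iff $I_x \subseteq M^A$ and $J_y \subseteq M^B$. As $\Psi$ retracts $\Fac(A \otimes_{\alpha} B)$ into $\Fac(A) \times \Fac(B)$, we have $M^A \in \Fac(A)$ and $M^B \in \Fac(B)$, so Proposition~\ref{p:phifac}(iii) in each factor rewrites these containments as $\phi_A^f(M^A) = x$ and $\phi_B^f(M^B) = y$, i.e. $(\phi_A^f \times \phi_B^f)(\Psi(M)) = (x,y)$. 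I expect the main obstacle to be (ii): it is the one place where the $C_0(X \times Y)$-structure genuinely interacts with the tensor norm, and it requires both verifying that $\nu$ is a bona fide nondegenerate central structure map and pinning down its base map on product primitive ideals before invoking the Lazar and Dauns--Hofmann uniqueness statements. Everything else is then either immediate from the earlier lemmas (iii, v) or a purely formal deduction (iv).
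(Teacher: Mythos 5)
Your proposal is correct, but it takes a genuinely different route through parts (iv) and (v). Parts (i)--(iii) track the paper: (i) is the same appeal to Lazar's theorem, and in (iii) you merely reprove, via functoriality of $\mint$, the containment $\Phi (I,J) \subseteq \Phi (P,Q)$ that the paper cites from~\cite[Corollary 2.3]{lazar_tensor} (in both treatments one tacitly uses that $\Phi$ maps $\Fac (A) \times \Fac (B)$ into $\Fac (A \mint B)$, so that Lemma~\ref{l:phifac} applies to $\Phi (I,J)$). In (ii) the core computation is identical --- the action of $\mu_A (f) \otimes \mu_B (g)$ on $(A \mint B)/\Phi (P,Q) \equiv (A/P) \mint (B/Q)$ is the scalar $f(\phi_A (P)) g(\phi_B (Q))$ --- but the paper concludes directly from faithfulness, $\bigcap \{ \Phi (P,Q) : (P,Q) \in \Pm (A) \times \Pm (B) \} = \{0\}$, that $(\mu_{\alpha} \circ \eta^{-1})(f \otimes g) = (\mu_A \otimes \mu_B)(f \otimes g)$, whereas you route the same computation through the bijective correspondence between structure maps and base maps together with Lazar uniqueness; both are sound. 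The real divergence is that you reverse the dependency between (iv) and (v): the paper proves (iv) first, using that $(\Phi \circ \Psi)(M)$ is factorial and contained in $M$, hence lies in $\mathrm{cl}\{M\}$, so that $\phi_{\alpha}^f$ takes the same value at $M$ and at $(\Phi \circ \Psi)(M)$, and then deduces (v) from (iv), Proposition~\ref{p:phifac}, and the hull identity $\Psi^{-1}(\mathrm{hull}_f (I_x) \times \mathrm{hull}_f (J_y)) = \mathrm{hull}_f (\Delta (I_x, J_y))$ of~\cite[Lemma 4.6(i)]{m_glimm}. You instead prove (v) directly from (ii), using nuclearity of $C_0 (X)$ to identify $\eta$ of the ideal of functions vanishing at $(x,y)$ with $\Delta (M_x, M_y)$ and then a density computation to get $K_{x,y} = \Delta (I_x, J_y)$; (iv) then follows by unwinding $M \supseteq \Delta (I_x, J_y)$ into $I_x \subseteq M^A$ and $J_y \subseteq M^B$ and applying Proposition~\ref{p:phifac}(iii) in $A \mint B$ and in each factor --- for which you still need, as the paper does, the fact from~\cite{m_glimm} that $\Psi$ maps $\Fac (A \mint B)$ into $\Fac (A) \times \Fac (B)$. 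Your route buys a more elementary, self-contained proof of (v) that avoids the hull machinery of~\cite[Lemma 4.6]{m_glimm}, making visible exactly where exactness of the commutative factor enters; the paper's order keeps the argument uniformly at the level of the ideal-space correspondences already imported from~\cite{m_glimm}, and its (iv) does not pass through (v) at all. No gap in either direction.
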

\begin{proof}
(i) is shown in the remarks preceding the proposition.

(ii)  For $f \in C_0 (X \times Y )$, we have $\mu_{\alpha} (f ) = \theta_{\alpha} (f \circ \phi_{\alpha} )$, where $\theta_{\alpha} : C^b ( \Pm (A \mint B) ) \rightarrow ZM(A \mint B)$ is the Dauns-Hofmann isomorphism of equation~(\ref{e:dh}).  For $f \otimes g \in C_0 (X) \otimes C_0 (Y)$ and $(x,y) \in X \times Y$, the $\ast$-isomorphism $\eta$ satisfies $\eta^{-1} ( f \otimes g )(x,y ) = f(x)g(y)$.  Thus by linearity and continuity, it suffices to show that for all $f \otimes g \in C_0 (X) \otimes C_0 (Y)$ and $a \otimes b \in A \mint B$ we have
\[
\mu_{\alpha} ( \eta^{-1} (f \otimes g ) ) (a \otimes b ) = (\mu_A \otimes \mu_B) (f \otimes g )(a \otimes b ) = \mu_A (f) a \otimes \mu_B (g) b.
\]
Take $(P,Q) \in \Pm (A) \times \Pm (B)$, then the Dauns-Hofmann $\ast$-isomorphism $\theta_{\alpha}$ of~(\ref{e:dh}) gives
\begin{eqnarray*}
(\mu_{\alpha} \circ \eta^{-1} ) ( f \otimes g )( a \otimes b) + \Phi (P,Q) & = &  \theta_{\alpha} \left( \eta^{-1} ( f \otimes g ) \circ \phi_{\alpha} \right) (a \otimes b) + \Phi (P,Q) \\
& = & \eta^{-1} ( f \otimes g ) \circ \phi_{\alpha} ( \Phi (P,Q ) ) \left( a \otimes b + \Phi (P,Q) \right) \\
& = & (f \circ \phi_A)(P) ( g \circ \phi_B ) (Q) ( a \otimes b + \Phi (P,Q) ),
\end{eqnarray*}
and since $A \mint B / \Phi (P,Q) \equiv (A/P) \mint (B/Q)$, the last line becomes
\[
\left( (f \circ \phi_A ) (P) ( a + P ) \right) \otimes \left( (g \circ \phi_B )(Q) ( b + Q ) \right).
\]
On the other hand, applying the isomorphisms $\theta_A$ and $\theta_B$ of~(\ref{e:dh}) associated with $A$ and $B$ respectively we get
\begin{eqnarray*}
(\mu_A \otimes \mu_B ) ( f \otimes g )( a \otimes b ) + \Phi (P,Q) & = & \left( \mu_A (f) a + P \right) \otimes \left( \mu_B (g) b + Q \right) \\
& = & \left( \theta_A ( f \circ \phi_A ) a + P \right) \otimes \left( \theta_B ( g \circ \phi_B) b + Q \right) \\
& = & \left( (f \circ \phi_A )(P) (a + P ) \right) \otimes \left( (g \circ \phi_B )(Q)(b+Q) \right).
\end{eqnarray*}
Thus for all $(P,Q) \in \Pm (A) \times \Pm (B)$ we have
\[
\left( (\mu_{\alpha} \circ \eta^{-1} ) ( f \otimes g ) - (\mu_A \otimes \mu_B ) ( f \otimes g ) \right)(a \otimes b) \in \Phi (P,Q),
\]
and since $\bigcap \{ \Phi (P,Q) : (P,Q) \in \Pm (A) \times \Pm (B) \} = \{ 0 \}$, it follows that
\[
(\mu_{\alpha} \circ \eta^{-1} ) ( f \otimes g ) = (\mu_A \otimes \mu_B ) ( f \otimes g ).
\]
(iii) Let $(I,J) \in \mathrm{Fac}(A) \times \mathrm{Fac} (B)$ and $(P,Q) \in \mathrm{hull}(I) \times \mathrm{hull} (J)$.  Then by (i) we have $(\phi_A \times \phi_B) (P,Q) = (\phi_{\alpha} \circ \Phi ) (P,Q)$. By~\cite[Corollary 2.3]{lazar_tensor} we have $\Phi (P,Q) \in \mathrm{hull} ( \Phi (I,J) )$, so that Lemma~\ref{l:phifac} gives
\[
(\phi_A^f \times \phi_B^f )(I,J) = (\phi_A \times \phi_B )(P,Q) = (\phi_{\alpha} \circ \Phi )(P,Q) = (\phi_{\alpha}^f \circ \Phi )(I,J).
\]

(iv) By~\cite[Propositions 2.1 and 4.1]{m_glimm}, $(\Phi \circ \Psi) (M) \in \Fac (A \mint B)$ with $M \supseteq (\Phi \circ \Psi )(M)$, so that $(\Phi \circ \Psi )(M) \in \mathrm{cl} \{ M \}$ and hence
\[
\phi_{\alpha}^f (M) = \phi_{\alpha}^f \left( (\Phi \circ \Psi ) (M) \right).
\]
By (iii), the latter is precisely $(\phi_A^f \times \phi_B^f ) ( \Psi (M) )$.

(v) By (iv) and Proposition~\ref{p:phifac}(iii), we have $M \in \mathrm{hull}_{f} (K_{x,y} )$ if and only if $\Psi (M) \in \mathrm{hull}_f (I_x) \times \mathrm{hull}_f (J_y )$.  But then since $\Psi^{-1} ( \mathrm{hull}_f (I_x) \times \mathrm{hull}_f (J_y ) ) = \mathrm{hull}_f ( \Delta (I_x, J_y ) )$~\cite[Lemma 4.6(i)]{m_glimm}, it follows that $K_{x,y} = \Delta (I_x , J_y )$.
\end{proof}

\begin{definition}
\label{d:tensorbundle}
For a $C_0 (X)$-algebra $(X,A, \mu_A)$ and a $C_0 (Y)$-algebra $(B,Y,\mu_B )$ we will denote by $(A \mint B , X \times Y , \mu_A \otimes \mu_B )$ the $C_0 (X \times Y )$-algebra defined by Proposition~\ref{p:tensorbundle}, and we will consider this construction as the natural (minimal)  tensor product in the category of $C_0 ( - )$-algebras,
\[
(A , X , \mu_A ) \mint ( B , Y , \mu_B ) \equiv (A \mint B , X \times Y , \mu_A \otimes \mu_B ).
\]
\end{definition}

The tensor product construction of definition~\ref{d:tensorbundle} does not agree in general with the fibrewise tensor product bundle studied by Kirchberg and Wassermann in~\cite{kirch_wass}.  This fact may be deduced from~\cite[Lemma 2.3 and Proposition 4.3]{kirch_wass}, and is demonstrated explicitly in~\cite[Proposition 3.1]{blanch_exact}.

We now introduce some properties which characterise when these two notions of the tensor product of a pair of C$^{\ast}$-bundles coincide.  For  C$^{\ast}$-algebras $A$ and $B$ we define the properties $\fgl$ and $\fmp$ on $A \mint B$ as follows:
\begin{align}
\tag{$\mathrm{F}_{\mathrm{Gl}}$} \Phi (G,H) &= \Delta (G,H)  \mbox{ for all }(G,H) \in \Gl (A) \times \Gl (B) \\
\tag{$\mathrm{F}_{\mathrm{MP}}$} \Phi (I,J) &= \Delta (I,J)  \mbox{ for all }(I,J) \in \MP (A) \times \MP (B).
\end{align}
If in addition $(A,X,\mu_A)$ is a $C_0 (X)$-algebra and $(B,Y,\mu_B)$ a $C_0 (Y)$-algebra, we will say that the $C_0 (X \times Y)$-algebra $(A \mint B , X \times Y , \mu_A \otimes \mu_B )$ satisfies \emph{property $\fxy$} if the equation

\begin{equation}\tag{$\mathrm{F}_{X,Y}$}
\Phi (I_x, J_y ) = \Delta (I_x , J_y ) \mbox{ for all } (x,y) \in X \times Y
\end{equation}
holds.  For convenience, we will refer to $\fxy$ as a property of $A \mint B$, rather than $(A \mint B , X \times Y , \mu_A \otimes \mu_B )$, when the context is clear.

We remark that if $A \mint B$ satisfies Tomiyama's property (F) then clearly $A \mint B$ satisfies properties $\fxy$, $\fgl$ and $\fmp$, cf~\cite[Theorem 1.1 and Theorem 2.3]{kaniuth}.  The converse is not true in general; indeed, let $A$ and $B$ be C$^{\ast}$-algebras such that $A \mint B$ does not satisfy property (F), and let $X = \{ x \}$ and $Y = \{ y \}$ be one-point spaces.  Regarding $A$ as a $C_0 (X)$-algebra and $B$ as a $C_0 (Y)$-algebra in the obvious (trivial) way, we have $I_x = \{ 0 \}$ and $J_y = \{ 0 \}$.  Then it is evident that $\Delta (I_x , J_y ) = \Phi (I_x , J_y ) = \{ 0 \}$, so that $A \mint B$ satisfies  property $\fxy$, hence this property does not imply (F).

To see that $\fmp$ and $\fgl$ do not imply (F),  let $A = B = B(H)$, where $H$ is a separable infinite dimensional Hilbert space. Then $\Gl (B(H)) = \MP (B(H)) = \{ 0 \}$, so that as before, $B(H) \mint B(H)$ satisfies $\fmp$ and $\fgl$, but does not satisfy (F) by~\cite{wassermann}.  Other examples are discussed in~\cite[p. 140-141]{arch_cb}.

 The following Theorem relates the $C_0 (X \times Y )$-algebra $(A \mint B , X \times Y , \mu_A \otimes \mu_B )$, its corresponding upper-semicontinuous C$^{\ast}$-bundle, and the fibrewise tensor product of the bundles associated with $A$ and $B$.

\begin{theorem}
\label{c:phi=delta}
Let $(A , X , \mu_A )$ be a $C_0 (X)$-algebra and $(B , Y , \mu_B )$ a $C_0 (Y)$-algebra, and denote by $\mathscr{A} = ( X , A , \pi_x : A \rightarrow A_x )$ and $\mathscr{B} = (Y,B, \sigma_y : B \rightarrow B_y )$ the associated upper-semicontinuous C$^{\ast}$-bundles over $X$ and $Y$ respectively.  Then
\begin{enumerate}
\item[(i)] the $C_0 (X \times Y )$-algebra $(A \mint B , X \times Y , \mu_A \otimes \mu_B )$ defines an upper-semicontinuous C$^{\ast}$-bundle 
\[
\left( X \times Y , A \mint B , \gamma_{(x,y)} : A \mint B \rightarrow (A \mint B )_{(x,y)} \right),
\]
where $(A \mint B )_{(x,y)} = A \mint B / \Delta (I_x , J_y )$ for all $(x,y) \in X \times Y$,
\item[(ii)] the bundle $\left( X \times Y , A \mint B , \gamma_{(x,y)} : A \mint B \rightarrow (A \mint B )_{(x,y)} \right)$ agrees with the fibrewise tensor product bundle $\mathscr{A} \mint \mathscr {B}$ if and only if $A \mint B$ satisfies property $\fxy$,
\item[(iii)] If $\mathscr{A}$ and $\mathscr{B}$ are continuous C$^{\ast}$-bundles and $A \mint B$ satisfies property $\fxy$, then $(A \mint B , X \times Y , \mu_A \otimes \mu_B)$ is a continuous $C_0 (X \times Y )$-algebra.
\end{enumerate}
\end{theorem}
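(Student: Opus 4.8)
The plan is to read off (i) from the preparatory results, reduce (ii) to a comparison of the kernels of the two families of fibre maps, and derive (iii) by marrying the upper-semicontinuity that comes for free from the $C_0(X \times Y)$-structure with the pointwise continuity criterion of Theorem~\ref{t:kirch_wass1}(iii). For (i), Proposition~\ref{p:tensorbundle}(i) already presents $A \mint B$ as a $C_0(X \times Y)$-algebra with base map $\phi_{\alpha}$, so Proposition~\ref{p:c0xbundles}(i) immediately produces an upper-semicontinuous C$^{\ast}$-bundle whose fibre at $(x,y)$ is $A \mint B / K_{x,y}$, with $\gamma_{(x,y)}$ the quotient map. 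It then remains only to invoke the identification $K_{x,y} = \Delta(I_x, J_y)$ from Proposition~\ref{p:tensorbundle}(v) to obtain the stated form of the fibres.

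For (ii), I would compare the two bundles fibrewise. Both have total algebra $A \mint B$ and base space $X \times Y$, so they coincide exactly when their structure maps share the same kernel at each point. Since $\mathscr{A}$ and $\mathscr{B}$ are the bundles attached to the $C_0$-structures, Proposition~\ref{p:c0xbundles}(ii) gives $\ker(\pi_x) = I_x$ and $\ker(\sigma_y) = J_y$; hence $\pi_x = q_{I_x}$, $\sigma_y = q_{J_y}$, and by the definition of $\Phi$ the fibrewise map $\pi_x \otimes \sigma_y$ has kernel $\Phi(I_x, J_y)$. The map $\gamma_{(x,y)}$ from (i) has kernel $\Delta(I_x, J_y)$. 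Thus the two bundles agree if and only if $\Phi(I_x, J_y) = \Delta(I_x, J_y)$ for every $(x,y) \in X \times Y$, which is precisely property $\fxy$.

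For (iii), I assume $\mathscr{A}$ and $\mathscr{B}$ continuous and $\fxy$ in force. By (ii) the $C_0(X \times Y)$-bundle then coincides with the fibrewise tensor product $\mathscr{A} \mint \mathscr{B}$, so its norm functions are $N(c): (x,y) \mapsto \norm{(\pi_x \otimes \sigma_y)(c)}$. Theorem~\ref{t:kirch_wass1}(iii) tells us that $N(c)$ is continuous at $(x_0,y_0)$ exactly when $\Phi(I_{x_0}, J_{y_0}) = \Delta(I_{x_0}, J_{y_0})$, and under $\fxy$ this holds at every point, so each $N(c)$ is continuous on all of $X \times Y$. The upper-semicontinuous bundle of (i) moreover guarantees that $\{ (x,y) : N(c) \geq \varepsilon \}$ has compact closure for every $\varepsilon > 0$, i.e.\ that $N(c)$ vanishes at infinity; combined with continuity this places $N(c)$ in $C_0(X \times Y)$, so the bundle is continuous.

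I expect the main obstacle to be conceptual rather than computational. The point is that upper-semicontinuity is automatic from the $C_0(X \times Y)$-algebra structure, whereas continuity is genuinely delicate and rests on the Kirchberg--Wassermann criterion. A priori the fibrewise tensor product of two continuous bundles is known only to be lower-semicontinuous (Theorem~\ref{t:kirch_wass1}(ii)); it is precisely the identification in (ii) of this bundle with the intrinsic $C_0(X \times Y)$-bundle, which carries the opposite (upper) semicontinuity, that lets the two one-sided estimates be combined into full continuity once $\fxy$ is assumed.
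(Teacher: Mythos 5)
Your proof is correct and follows essentially the same route as the paper's: (i) via Proposition~\ref{p:tensorbundle}(v) and the $C_0$-algebra/bundle equivalence, (ii) by matching kernels $\Delta(I_x,J_y)$ against $\ker(\pi_x\otimes\sigma_y)=\Phi(I_x,J_y)$, and (iii) by feeding $\fxy$ into the pointwise criterion of Theorem~\ref{t:kirch_wass1}(iii). Your only addition is to make explicit the vanishing-at-infinity of the norm functions (from the upper-semicontinuous bundle structure of (i)), a detail the paper leaves implicit in its one-line deduction of (iii).
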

\begin{proof}
(i) is immediate from Proposition~\ref{p:tensorbundle}(v) and the equivalence of $C_0 (X)$-algebras (resp. $C_0 (Y)-, C_0 (X \times Y )-$) and upper-semicontinuous C$^{\ast}$-bundles over $X$ (resp. $Y,X\times Y$).

By definition of the maps $\Phi$ and $\Delta$, for all $(x,y) \in X \times Y$ we have $\Phi (I_x , J_y) = \Delta (I_x , J_y )$ if and only if $( A \mint B )_{(x,y)}  \equiv A_x \mint B_y$, from which (ii) follows.

If $\Phi (I_x , J_y ) = \Delta (I_x , J_y )$ for all $(x,y) \in X \times Y$, then (iii) follows from (ii) and Theorem~\ref{t:kirch_wass1}(iii).
\end{proof}
\begin{remark}
\label{r:tensorbundle}
\begin{enumerate}
\item[(i)]
Given continuous $C_0 (X)$-algebras $(A,X, \mu_A )$ and $(B,Y,\mu_B)$, it is natural to ask whether or not the converse of Theorem~\ref{c:phi=delta}(iii) holds; that is, is property $\fxy$ a necessary condition for the $C_0 (X \times Y)$-algebra $(A \mint B , X \times Y , \mu_A \otimes \mu_B )$ to be continuous.  The analogous result for the fibrewise tensor product is true by Theorem~\ref{t:kirch_wass1}(iii).  We will show in section~\ref{s:ineq} that this is not the case; we can construct such  pairs $(A,X, \mu_A )$ and $(B,Y,\mu_B)$ such that $A \mint B$ does not satisfy property $\fxy$ but $(A \mint B , X \times Y , \mu_A \otimes \mu_B )$ is a continuous $C_0 (X \times Y )$-algebra.  One interesting consequence of this fact is that continuity of the associated fibrewise tensor product is a strictly stronger property than continuity of $(A \mint B , X \times Y , \mu_A \otimes \mu_B )$. 

\item[(ii)]
A special case of Proposition~\ref{p:tensorbundle} arises as follows; let $A$ be a $C_0 (X)$-algebra and $B$ any C$^{\ast}$-algebra.  Then we may regard $B$ as a $C(Y)$-algebra where $Y = \{ y \}$ is a one-point space, so that $X \times Y = X$ and $A \mint B$ is also a $C_0(X)$-algebra.  The base map $\phi_{\alpha} : \Pm (A \mint B) \rightarrow X$ is the extension of $\phi_A \circ p_1 : \Pm(A) \times \Pm (B) \rightarrow X$ to $\Pm(A \mint B)$, where $p_1$ is the projection onto the first factor.  The corresponding structure map is given by $\mu_A \otimes 1 : C_0 (X) \rightarrow ZM(A) \otimes ZM(B) \subseteq ZM(A \mint B )$, where $\mu_A \otimes 1 (f) = \mu_A (f) \otimes 1 $ for all $f \in C_0 (X)$.

Thus by Corollary~\ref{c:phi=delta}(i) we get an upper-semicontinuous C$^{\ast}$-bundle $(X , A \mint B, \gamma_x : A \mint B \rightarrow (A \mint B)_x )$, where $(A \mint B)_x = (A \mint B) / (I_x \mint B )$ for all $x \in X$.  The analogous construction in the fibrewise tensor product case is as follows: for a C$^{\ast}$-bundle $\mathscr{A} = ( X , A , \pi_x : A \rightarrow A_x )$ and a C$^{\ast}$-algebra $B$, we define the C$^{\ast}$-bundle $\mathscr{A} \mint B = ( X , A \mint B , \pi_x \otimes \mathrm{id} : A \mint B \rightarrow A_x \mint B )$.  The two bundles agree precisely when $\Phi (I_x , \{ 0 \} ) = \Delta (I_x , \{ 0 \} )$ for all $x \in X$, by Corollary~\ref{c:phi=delta}(ii).  We will make use of this special case as an intermediate step in the construction of the tensor product of two C$^{\ast}$-bundles in subsequent sections.
\end{enumerate}
\end{remark}

\section{Comparison with the fibrewise tensor product}
\label{s:ineq}

In this section we show that the assumption of property $\fxy$ in Theorem~\ref{c:phi=delta}(iii) is not necessary in general. More precisely, we show that for any inexact C$^{\ast}$-algebra $B$,  there is a continuous $C_0 (X)$-algebra $(A , X, \mu_A )$ such that
\begin{enumerate}
\item[(i)] the fibrewise tensor  product $\mathscr{A} \mint B$, where $\mathscr{A}$ is  the continuous C$^{\ast}$-bundle associated with $(A,X,\mu_A)$,  is discontinuous, while
\item[(ii)] the $C_0 (X )$-algebra $(A \mint B , X  , \mu_A \otimes 1 )$ is continuous.
\end{enumerate}
This shows that the analogue of Archbold's result~\cite[Theorem 3.3]{arch_cb} for the bundles constructed in section~\ref{s:tensorbundle} is untrue.  In particular, we deduce that for continuous $C_0 (X)$-algebras $(A,X,\mu_A)$ and $(B,Y,\mu_B)$, the assumption that the $C_0 (X \times Y )$-algebra $(A \mint B , X \times Y , \mu_A \otimes \mu_B )$ is equal to the fibrewise tensor product (i.e. $A \mint B$ satisfies property $\fxy$), is not a necessary condition for continuity.
\begin{lemma}
\label{l:closed}
Let $(A,X,\mu_A)$ be a $C_0 (X)$-algebra, and denote by $\phi_A^f : \Fac (A) \rightarrow X$ the base map. For any closed subset$F \subseteq X$, setting
\[
I_F = \bigcap \{ I_x : x \in F \},
\]
we have
\begin{enumerate}
\item[(i)]  For $M \in \Fac (A)$  $M \supseteq I_F$ if and only if $\phi_A^f (M) \in F$.
\item[(ii)]  For any C$^{\ast}$-algebra $B$ we have
\[
I_F \mint B = \bigcap_{x \in F} \left( I_x \mint B \right).
\]
\end{enumerate}
\end{lemma}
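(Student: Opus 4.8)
The plan is to prove part (i) first, since part (ii) will follow from it by intersecting the two ideal descriptions after tensoring. For part (i), I would argue directly from the definition $I_F = \bigcap_{x \in F} I_x$ together with Proposition~\ref{p:phifac}(iii), which characterises the relation $I \supseteq I_x$ for factorial $I$ as $\phi_A^f(I) = x$. For the forward implication, suppose $M \in \Fac(A)$ with $M \supseteq I_F$. I would take $P \in \mathrm{hull}(M)$, so $P \supseteq M \supseteq I_F = \bigcap_{x \in F} I_x$, and use primeness/primitivity of $P$ together with the fact that each $I_x$ is the kernel of the quotient onto the fibre $A_x$. The goal is to deduce $\phi_A^f(M) = \phi_A(P) \in F$ (the equality $\phi_A^f(M) = \phi_A(P)$ coming from Lemma~\ref{l:phifac}). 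The conclusion $\phi_A(P) \in F$ needs that $P \supseteq I_F$ forces $P$ to lie over a point of $F$, which I expect to follow from the description of $I_x$ as an intersection of primitives over $\phi_A^{-1}(x)$ and the closedness of $F$. For the reverse implication, if $\phi_A^f(M) \in F$, say $\phi_A^f(M) = x \in F$, then Proposition~\ref{p:phifac}(iii) gives $M \supseteq I_x \supseteq I_F$ immediately.

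For part (ii), the inclusion $I_F \mint B \subseteq \bigcap_{x \in F}(I_x \mint B)$ is trivial since $I_F \subseteq I_x$ for each $x \in F$, and tensoring with $B$ over the minimal norm is order-preserving on ideals. The substantive direction is the reverse inclusion. Here I would exploit injectivity of the minimal tensor product: an ideal of the form $I \mint B$ is, up to the canonical identification, exactly $\{c \in A \mint B : (\mathrm{id} \otimes \text{functionals picking out } A\text{-part}) \}$, but more usefully I would use that $\bigcap_x (I_x \mint B)$ can be computed via the slice-map characterisation. The cleanest route is to note that for each $x$, $I_x \mint B = \ker(q_{I_x} \otimes \mathrm{id}_B)$ by injectivity of $\mint$, so $\bigcap_{x \in F}(I_x \mint B) = \bigcap_{x \in F} \ker(q_{I_x} \otimes \mathrm{id})$, and this common kernel is the kernel of the map into $\prod_{x \in F}(A/I_x) \mint B$. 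I then want to identify this with $I_F \mint B = \ker(q_{I_F} \otimes \mathrm{id})$, which reduces to showing $\bigcap_{x \in F} I_x = I_F$ survives tensoring, i.e. that $(\bigcap_x I_x) \mint B = \bigcap_x (I_x \mint B)$.

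The main obstacle is precisely this last commutation of an \emph{infinite} intersection of ideals with the minimal tensor product. For a single ideal this is automatic by injectivity, but $(\bigcap_\lambda I_\lambda) \mint B = \bigcap_\lambda (I_\lambda \mint B)$ is not a formal consequence of injectivity alone and can fail without good structure. The way I would handle this is to invoke the upper-semicontinuous bundle structure: by Proposition~\ref{p:c0xbundles}, $A$ is the section algebra of an upper-semicontinuous bundle with $\ker \pi_x = I_x$, and $A \mint B$ correspondingly admits a bundle description (via the special case in Remark~\ref{r:tensorbundle}(ii), where $(A \mint B)_x = (A \mint B)/(I_x \mint B)$). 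An element $c \in A \mint B$ lies in every $I_x \mint B$ for $x \in F$ precisely when its image vanishes in $(A\mint B)_x$ for all $x \in F$. I would then argue that $c \in I_F \mint B$ is equivalent to the same vanishing condition, using that $I_F = \bigcap_{x \in F} I_x$ and part (i) to control which fibres (equivalently, which factorial ideals over $F$) see $c$. The key technical point to verify carefully is that the faithful family of fibre maps $\{\pi_x \otimes \mathrm{id}_B : x \in F\}$ separates $I_F \mint B$ from its complement exactly as the $\{q_{I_x} \otimes \mathrm{id}\}$ do, which is where the closedness of $F$ and the upper-semicontinuity (guaranteeing $\ker(\pi_x \otimes \mathrm{id}) = I_x \mint B$) are both used.
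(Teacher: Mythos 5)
Your plan for part (i) is essentially sound, though the appeal to ``primeness/primitivity of $P$'' is a red herring: primeness only controls finite intersections, and what actually closes the forward implication is the hull--kernel identity $I_F = k\bigl( \phi_A^{-1}(F) \bigr)$, so that $P \supseteq I_F$ iff $P \in \mathrm{hull}(I_F) = \overline{\phi_A^{-1}(F)} = \phi_A^{-1}(F)$, the last equality by continuity of $\phi_A$ and closedness of $F$. Spelled out this way your route works, and it differs from the paper's, which argues by contraposition: if $m = \phi_A^f (M) \notin F$, pick $f \in C_0 (X)$ with $f(m)=1$ and $f(F)=\{0\}$; then $\mu_A (f) a \in I_F$ for every $a \in A$ while $\mu_A (f) a + M = a + M$, so $M \not\supseteq I_F$. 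Either argument is fine.

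The genuine gap is in part (ii), where your plan rests twice on an identification that is false in general --- and whose failure is the entire point of this lemma's application. You assert ``$I_x \mint B = \ker (q_{I_x} \otimes \mathrm{id}_B )$ by injectivity of $\mint$'' and later that upper-semicontinuity guarantees $\ker ( \pi_x \otimes \mathrm{id} ) = I_x \mint B$. Injectivity gives only $I_x \mint B = \Delta (I_x , \{0\}) \subseteq \Phi (I_x , \{0\}) = \ker ( q_{I_x} \otimes \mathrm{id}_B )$; equality at all $x$ is property $\fxy$ (an exactness-type condition), which fails precisely when $B$ is inexact. Correspondingly, the fibrewise bundle with fibre maps $\pi_x \otimes \mathrm{id}$ is lower- but not upper-semicontinuous in general; the bundle that \emph{is} upper-semicontinuous is the one of Remark~\ref{r:tensorbundle}(ii) with fibres $(A \mint B )/(I_x \mint B )$, and its fibre maps differ from $\pi_x \otimes \mathrm{id}$ exactly when $\fxy$ fails. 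Note that Lemma~\ref{l:closed} is invoked in Theorem~\ref{t:ineq} with $B$ inexact, where the proof turns on the \emph{contrast} between $\bigcap_q \ker ( \pi_q \otimes \mathrm{id}_B ) = \ker ( \pi_0 \otimes \mathrm{id}_B )$ (Lazar's lemma, which handles the $\Phi$-side) and $\bigcap_q ( I_q \mint B ) = I_0 \mint B$ (this lemma, the $\Delta$-side); under your identifications the two intersections coincide and the application becomes vacuous. The paper avoids any kernel identification and argues order-theoretically: $\Psi ( I_F \mint B ) = (I_F , \{0\})$ and $M \supseteq \Delta ( \Psi (M))$ (facts from~\cite{m_glimm}) give, for $M \in \Fac (A \mint B )$, that $M \supseteq I_F \mint B$ iff $M^A \supseteq I_F$; then part (i) together with $\phi_{\alpha}^f (M) = \phi_A^f (M^A)$ (Proposition~\ref{p:tensorbundle}(iv)) shows that $I_F \mint B$ and $\bigcap_{x \in F} ( I_x \mint B )$ have the same hull in $\Fac ( A \mint B )$, hence coincide. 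Your final sentence --- using Remark~\ref{r:tensorbundle}(ii) and part (i) applied to the $C_0 (X)$-algebra $A \mint B$ with ideals $K_x = I_x \mint B$ --- is the correct skeleton, but to make it a proof you must supply the equivalence $M \supseteq I_F \mint B \Leftrightarrow M^A \supseteq I_F$ via $\Psi$ and $\Delta$, not via the false kernel identities.
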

\begin{proof}
(i) Denote by $m: = \phi_A^f (M)$ and suppose first that $m \not \in F$.  Choose $a \in A$ with $\norm{a + M } = 1$ and $f \in C_0 (X)$ with $f(m) = 1$ and $f(F) = \{ 0 \}$. Then
\[
\mu_A (f) a + M = f ( \phi_A^f (M) ) (a+M ) = f(m)(a + M ) = a +M,
\]
so that $\mu_A(f) a \not\in M$.  On the other hand, for all $x \in F$,
\[
\mu_A(f) a + I_x = f(x) (a + I_x ) = 0,
\]
so that $\mu_A (f) a \in I_F$.  In particular, $M \not\supseteq I_F$.

Now suppose that $m \in F$.  Then by Proposition~\ref{p:phifac}(iii), $M \supseteq I_m $ and so  $M \supseteq \bigcap_{x \in F } I_x = I_F$.

(ii)  We will regard $A \mint B$ as a $C_0 (X)$-algebra as in Remark~\ref{r:tensorbundle}(ii); the base map $\phi_{\alpha} : \Pm (A \mint B) \rightarrow X$ being the unique extension to $\Pm (A \mint B )$ of $\phi_A \circ p_1 : \Pm (A) \times \Pm (B) \rightarrow X$, with $p_1$ the projection onto the first factor.

 Let $M \in \Fac ( A \mint B)$ and let $(M^A,M^B) = \Psi (M)$.  We first show that $M \supseteq I_F \mint B$ if and only if $M^A \supseteq I_F$. By~\cite[Lemma 4.6(ii)]{m_glimm} we have $\Psi (I_F \mint B ) = (I_F , \{ 0 \} )$, and since $\Psi$ is order-preserving, it is clear that if $M \supseteq I_F \mint B$ then $M^A \supseteq I_F$.  On the other hand, since $\Delta$ is also order-preserving, if $M^A \supseteq I_F$ then using~\cite[(4.2)]{m_glimm} we see that
\[
M \supseteq \Delta (M^A , M^B) \supseteq \Delta (I_F, \{ 0 \} ) = I_F \mint B.
\]

By (i) $M^A \supseteq I_F$ if and only if $\phi_A^f (M^A) \in F$.  But then  by Proposition~\ref{p:tensorbundle}(iv), we have
\[ \phi_{\alpha}^f (M) = (\phi_A^f \times \phi_B^f)(M^A,M^B) = (\phi_A^f)(M^A) \]
 and the conclusion follows.

\end{proof}

\begin{lemma}
\label{l:stone}
Let $X$ be an extremally disconnected compact Hausdorff space.  Then any $C(X)$-algebra $(A,X,\mu_A)$ is continuous. 
\end{lemma}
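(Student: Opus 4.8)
The plan is to verify directly that the norm functions $N(a)\colon x\mapsto \|\pi_x(a)\| = \|a + I_x\|$ belong to $C(X)$ for every $a\in A$. By Proposition~\ref{p:c0xbundles}(i) the bundle associated with $(A,X,\mu_A)$ is upper-semicontinuous, so each $N(a)$ is automatically upper-semicontinuous and the entire content is \emph{lower}-semicontinuity. Since $N(a)^2 = N(a^*a)$ in the fibres, it suffices to treat $N(b)$ for $b = a^*a \geq 0$: if every such $N(b)$ is continuous then so is $N(a) = \sqrt{N(b)}$.

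The main structural move is to reduce to the commutative case. Fix $b\geq 0$ and let $E$ be the closed $\mu_A(C(X))$-submodule C$^{\ast}$-subalgebra of $A$ generated by $b$, that is, $E = \overline{\mu_A(C(X))\,C^*(b)}$. Because $\mu_A(C(X))\subseteq ZM(A)$ is central, the two commuting abelian pieces generate an \emph{abelian} C$^{\ast}$-algebra $E$, which is $\mu_A(C(X))$-invariant by construction and contains $b$. Applying Lemma~\ref{l:subcx} to the inclusion $\iota\colon E\hookrightarrow A$ shows that $(E,X,\mu_E)$ is a $C(X)$-algebra with $I_x^E = E\cap I_x$ and with fibre norms agreeing with those of $A$, so that $N^E(b) = N(b)$. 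Writing $E\cong C_0(Y)$ with induced base map $\pi\colon Y\to X$ and $b\leftrightarrow \hat b\in C_0(Y)$, one has $N(b)(x) = \sup\{\hat b(y) : y\in\pi^{-1}(x)\}$, and by Proposition~\ref{p:c0xbundles}(iii) continuity of the abelian $C(X)$-algebra $E$ is equivalent to openness of $\pi$. Thus the lemma reduces to its own abelian case, and ultimately to the claim that the base map of an abelian $C(X)$-algebra over an extremally disconnected $X$ is open.

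Extremal disconnectedness now enters through two standard consequences: the clopen subsets of $X$ form a basis (an extremally disconnected compact Hausdorff space is zero-dimensional), and $C(X)\cong\mu_A(C(X))$ is monotone complete. For a clopen $V\subseteq X$ the element $\mu_A(\chi_V)$ is a central projection and $\|\mu_A(\chi_V)b\| = \sup_{x\in V}N(b)(x)$, whence upper-semicontinuity and the clopen basis give $N(b)(x) = \inf\{\,\|\mu_A(\chi_V)b\| : V\ni x\ \text{clopen}\,\}$. To obtain lower-semicontinuity I would fix $x_0$ and $\varepsilon>0$, cover the sublevel set $\{N(b)<\varepsilon\}$ by clopen sets $V$ on which $\mu_A(\chi_V)b$ has small norm, and form the supremum $\mu_A(\chi_{V_0}) = \bigvee_V \mu_A(\chi_V)$ in the monotone-complete algebra $\mu_A(C(X))$, where $V_0$ is the (clopen) closure of $\bigcup V$. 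Lemma~\ref{l:closed}(i), which detects containment $M\supseteq I_F$ through the value $\phi_A^f(M)\in F$, provides the bookkeeping to identify these sublevel sets with hulls and to organise the passage to the supremum.

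I expect the genuine obstacle to be precisely this last passage to the supremum. The function $N(b)$ may, a priori, jump \emph{upward} on the nowhere-dense boundary $V_0\setminus\bigcup V$, and such a jump is \emph{not} excluded by upper-semicontinuity alone — indeed there exist upper-semicontinuous functions on $\beta\mathbb{N}$ vanishing on a dense open set yet positive on the remainder. The resolution must therefore use the module structure rather than the topology of $N(b)$ in isolation: one controls the boundary by exploiting that $\bigwedge_V \mu_A(\chi_{X\setminus V}) = 0$ in $\mu_A(C(X))$ (the relevant intersection of clopen sets has empty interior), and that multiplication by the fixed positive $b$ interacts with this order-supremum through the monotone-complete (AW$^{\ast}$) structure of $\mu_A(C(X))$, forcing $\mu_A(\chi_{V_0})b$ to inherit the norm bound of the $\mu_A(\chi_V)b$. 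This yields $x_0\in V_0$ with $N(b)(x_0)<\varepsilon$, hence lower-semicontinuity, and with it continuity of $N(b)$ and of the lemma; equivalently, it shows that the base map $\pi$ is open. Making this transfer of suprema rigorous, so that no mass survives on the nowhere-dense boundary, is the step I would expect to require the most care.
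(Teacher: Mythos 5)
Your reductions are sound as far as they go: positivity, the passage to the abelian $C(X)$-submodule $E=\overline{\mu_A(C(X))C^{\ast}(b)}$ via Lemma~\ref{l:subcx} (whose hypothesis $\mu_B(f)\iota(a)\in\iota(A)$ holds by construction, and whose part (ii) gives the agreement of fibre norms you need), and the identity $N(b)(x)=\inf\{\norm{\mu_A(\chi_V)b}: V\ni x \mbox{ clopen}\}$, which follows from Lemma~\ref{l:nbhd} and the clopen basis. But the proof stops exactly where the lemma begins: the claim that $\mu_A(\chi_{V_0})b$ ``inherits'' the bound $\eps$ from the $\mu_A(\chi_V)b$ with $V$ clopen in $U=\{N(b)<\eps\}$ is asserted, not proved, and you flag this yourself. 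This is a genuine gap, and in fact it cannot be closed from the stated hypotheses alone: the supremum $\bigvee_V\mu_A(\chi_V)=\mu_A(\chi_{V_0})$ is an \emph{order} supremum in the monotone complete $C(X)$, and nothing in the definition of a $C(X)$-algebra makes the action $\mu_A$ order-continuous (normal), so multiplication by $b$ need not pass through it. Concretely, let $X=\beta\mathbb{N}$, fix a free ultrafilter $x_0\in\beta\mathbb{N}\setminus\mathbb{N}$, let $A=c_0\oplus\mathbb{C}$ and $\mu_A(f)=(\restr{f}{\mathbb{N}},f(x_0))\in ZM(A)=\ell^{\infty}\oplus\mathbb{C}$; this is a unital $\ast$-homomorphism, hence $(A,X,\mu_A)$ satisfies the paper's definition of a $C(X)$-algebra. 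For $a=(0,1)$ one checks $a\in I_n$ for each $n\in\mathbb{N}$ (take $f=1-\chi_{\overline{\{n\}}}$) and $a\in I_x$ for corona points $x\neq x_0$ (take $f=\chi_{\overline{S}}$ with $S\in x_0$, $S\notin x$), while $I_{x_0}\subseteq c_0\oplus\{0\}$; hence $N(a)=\chi_{\{x_0\}}$, upper-semicontinuous but discontinuous, and direct-summing with $C(X)$ shows zero fibres are not the issue (the paper itself allows zero fibres, cf.\ Theorem~\ref{t:dh}(ii)). So the upward jump on a nowhere-dense set that you were trying to exclude genuinely occurs, and no completion of your sketch (nor any other argument) can establish the lemma at this level of generality.

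The paper's own proof is entirely different and much shorter: it quotes Blanchard's formula $\norm{a+I_x}=\inf\{\norm{(1-\mu_A(f)+f(x))a}:f\in C(X)\}$, observes that each function $h_f(x)=\norm{(1-\mu_A(f)+f(x))a}$ is continuous, and invokes monotone completeness of $C(X)$ to conclude that the infimum lies in $C(X)$. It is worth noting that, read literally, this final step identifies the \emph{pointwise} infimum of the $h_f$ with their \emph{lattice} infimum in the monotone complete $C(X)$; the latter is only the greatest continuous minorant and can be strictly smaller than the pointwise infimum on a nowhere-dense set --- in the example above the $h_f$ have pointwise infimum $\chi_{\{x_0\}}$ and lattice infimum $0$. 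In other words, the obstacle you isolated is not an artefact of your route: it sits at precisely the same place in the paper's argument, and what rescues the intended application (Proposition~\ref{p:vna}) is additional structure not present in the lemma's hypotheses. There, $X=\Gl(M)=\beta\mathbb{N}$ for $M=\prod_n M_n(\mathbb{C})$, the ideals $I_n=G_n\mint B$ over the dense set $\mathbb{N}$ of \emph{isolated} points separate the points of $M\mint B$, and for $c$ in the algebraic tensor product one gets $\norm{c+G_x\mint B}=\lim_{n\to x}\norm{c+G_n\mint B}$ (the ultraproduct representation along $x$ kills $G_x\mint B$ and gives the lower bound, while $\norm{c+G_x\mint B}\leq\sup_{n\in S}\norm{c+G_n\mint B}$ for $S\in x$ gives the upper bound), whence continuity by uniform approximation. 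Your instinct that the module structure, not the topology of $N(b)$ alone, must carry the load was exactly right; but that structure must be normal, or the algebra must be faithfully represented over a dense set of isolated points, and neither is part of the hypotheses you (or the lemma) had available.
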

\begin{proof}
For each $x \in X$ we have
\[
\norm{a+I_x} = \inf \{ \norm{(1-\mu_A(f)+f(x))a} f \in C (X) \}
\]
by~\cite[Lemme 1.10]{blanch_def}.  Moreover, it is easily seen that for a given $f \in C (X)$ and $a \in A$, the norm function $x \mapsto \norm{(1-\mu_A(f)+f(x))a}$ is continuous on $X$.  Since $X$ is extremally disconnected and compact, $C(X)$ is monotone complete, and so the above infinum belongs to $C(X)$.
\end{proof}

\begin{proposition}
\label{p:vna}
Let $B$ be a C$^{\ast}$-algebra, $M$ a von Neumann algebra and $(M , \Gl (M) , \theta_M )$ the $C(\Gl(M))$-algebra associated with the Dauns-Hofmann representation of $M$.  Then $(M \mint B , \Gl (M) , \theta_M \otimes 1 )$ is a continuous $C( \Gl (M) )$-algebra.
\end{proposition}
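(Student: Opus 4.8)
The plan is to reduce the statement to Lemma~\ref{l:stone} by showing that the base space $\Gl (M)$ is an extremally disconnected compact Hausdorff space. Granting this, the $C(\Gl (M))$-algebra $(M \mint B , \Gl (M) , \theta_M \otimes 1 )$ is automatically continuous, since Lemma~\ref{l:stone} asserts that \emph{every} $C(X)$-algebra over such a space $X$ is continuous, with no hypothesis on exactness of $M$ or $B$ and without invoking property $\fxy$. This is precisely the phenomenon anticipated in Remark~\ref{r:tensorbundle}(i).

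First I would confirm that $(M \mint B , \Gl (M) , \theta_M \otimes 1 )$ is a genuine $C(\Gl (M))$-algebra. This is the special case of the construction in Remark~\ref{r:tensorbundle}(ii) in which $B$ is regarded as a $C(\set{y})$-algebra over a one-point space, with structure map $\theta_M \otimes 1 = \mu_M \otimes 1 : C(\Gl (M)) \rightarrow ZM(M) \otimes \mathbb{C}1 \subseteq ZM(M \mint B)$. Non-degeneracy is immediate from unitality of $M$: here $M(M) = M$ and $ZM(M) = Z(M)$, and the constant function $1 \in C(\Gl (M))$ satisfies $\theta_M (1) = 1_M$, so that $(\theta_M \otimes 1)(1) = 1_M \otimes 1$ is the identity of $M(M \mint B)$ and hence $(\theta_M \otimes 1)(C(\Gl (M)))(M \mint B) = M \mint B$.

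The main point is to establish that $\Gl (M)$ is extremally disconnected and compact. Compactness follows from unitality: since $M$ is unital, $\Pm (M)$ is (quasi-)compact, and the complete regularisation map $\rho_M : \Pm (M) \rightarrow \Gl (M)$ is a continuous surjection onto the Hausdorff space $\Gl (M)$, which is therefore compact. Consequently $C^b (\Gl (M)) = C(\Gl (M))$, and the Dauns-Hofmann isomorphism $\theta_M$ of~(\ref{e:dh}) identifies this with $ZM(M) = Z(M)$. Writing $Z(M) \cong C(\Omega)$ with $\Omega$ its spectrum, the space $\Omega$ is homeomorphic to $\Gl (M)$; and since $Z(M)$ is an abelian von Neumann algebra it is monotone complete, which forces $C(\Omega)$ to be monotone complete and hence $\Omega$ (equivalently $\Gl (M)$) to be extremally disconnected.

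With both facts in hand, Lemma~\ref{l:stone} applied to the $C(\Gl (M))$-algebra $(M \mint B , \Gl (M) , \theta_M \otimes 1 )$ yields continuity, completing the argument. I expect the identification of $\Gl (M)$ with the extremally disconnected spectrum of $Z(M)$ to be the only step requiring genuine care; the remaining verifications are direct appeals to unitality of $M$ and to the preceding lemmas.
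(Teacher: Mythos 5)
Your proposal is correct and takes essentially the same route as the paper: identify $\Gl (M)$ with the extremally disconnected compact Hausdorff spectrum of the abelian von Neumann algebra $Z(M)$ (the paper states this as $\Gl (M) = \Pm (Z(M))$) and then apply Lemma~\ref{l:stone}. The additional verifications you carry out (compactness of $\Gl (M)$ via unitality, non-degeneracy of $\theta_M \otimes 1$, monotone completeness of $Z(M)$ forcing extremal disconnectedness) are sound and simply make explicit what the paper's two-line proof compresses.
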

\begin{proof}
Since $Z(M)$ is a von Neumann algebra, $\Gl (M) = \mathrm{Prim} (Z(M))$ is  an extremally disconnected compact Hausdorff space.  Continuity of $(M \mint B , \Gl (M) , \theta_M \otimes 1 )$ then follows from Lemma~\ref{l:stone}.
\end{proof}

\begin{theorem}
\label{t:ineq}
Let $B$ be an inexact C$^{\ast}$-algebra.  Then there is a von Neumann algebra $M$, whose Dauns-Hofmann representation $(M , \Gl (M) , \theta_M )$ and associated continuous C$^{\ast}$-bundle $(\Gl (M) ,M , \pi_p:M \rightarrow M_p )$ satisfy
\begin{enumerate}
\item[(i)] $(M \mint B , \Gl (M) , \theta_M \otimes 1 )$ is a continuous $C(\Gl (M) )$-algebra, and
\item[(ii)] $(M \mint B , \Gl (M) , \pi_p \otimes \mathrm{id}_B : M \mint B \rightarrow M_p \mint B )$ is a discontinuous C$^{\ast}$-bundle.
\end{enumerate}
If in addition $B$ is a prime C$^{\ast}$-algebra (e.g. if $B$ primitive), then $M \mint B$ is quasi-standard.
\end{theorem}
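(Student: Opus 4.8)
The plan is as follows. Part (i) requires no special choice of $M$: for \emph{any} von Neumann algebra $M$ and any C$^{\ast}$-algebra $B$, the $C(\Gl(M))$-algebra $(M\mint B,\Gl(M),\theta_M\otimes 1)$ is continuous by Proposition~\ref{p:vna}. Thus the entire difficulty lies in choosing $M$ so that the \emph{fibrewise} tensor product in (ii) becomes discontinuous, and the inexactness of $B$ is exactly what supplies such an $M$. By the characterisation of exactness in terms of $\prod/\bigoplus$-sequences of full matrix algebras \cite{kirch_wass} (see also \cite{wassermann}), I would choose matrix dimensions $(k_n)$ so that, writing $M=\prod_n M_{k_n}$ (a von Neumann algebra) and $J_0=\bigoplus_n M_{k_n}$, the sequence
\[
0\To J_0\mint B\To M\mint B\To (M/J_0)\mint B\To 0
\]
fails to be exact, i.e.\ $\Phi(J_0,\set{0})\neq\Delta(J_0,\set{0})$. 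Since $Z(M)=\ell^{\infty}(\mathbb N)=C(\beta\mathbb N)$ we have $\Gl(M)=\beta\mathbb N$, the Dauns--Hofmann bundle of $M$ is continuous (von Neumann algebras being quasi-standard), and $M/G_n=M_{k_n}$ for $n\in\mathbb N$. Applying Proposition~\ref{p:vna} to this $M$ is precisely (i).

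The second step is to \emph{localise} the failure of exactness at $J_0$ to a single Glimm ideal. Regarding $M$ as a $C_0(\beta\mathbb N)$-algebra with $I_p=G_p$, the set $F=\beta\mathbb N\setminus\mathbb N$ is closed and $J_0=\bigcap_{p\in F}G_p=I_F$ (a bounded sequence has all free-ultrafilter limits zero precisely when it vanishes at infinity). I would then combine three observations: by Lemma~\ref{l:closed}(ii), $\Delta(J_0,\set{0})=J_0\mint B=\bigcap_{p\in F}\big(G_p\mint B\big)=\bigcap_{p\in F}\Delta(G_p,\set{0})$; since $J_0\subseteq G_p$ the quotient map $q_{G_p}$ factors through $q_{J_0}$, whence $\Phi(J_0,\set{0})\subseteq\bigcap_{p\in F}\Phi(G_p,\set{0})$; and $\Delta\subseteq\Phi$ always. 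If $\Phi(G_p,\set{0})=\Delta(G_p,\set{0})$ held for \emph{every} $p\in F$, these would squeeze $\Phi(J_0,\set{0})\subseteq\bigcap_p\Phi(G_p,\set{0})=\bigcap_p\Delta(G_p,\set{0})=\Delta(J_0,\set{0})\subseteq\Phi(J_0,\set{0})$, contradicting the choice of $M$. Hence there is $p_0\in F$ with $\Phi(G_{p_0},\set{0})\neq\Delta(G_{p_0},\set{0})$, and Theorem~\ref{t:kirch_wass1}(iii) (with $\sigma_{y}=\mathrm{id}_B$, so $\ker\sigma_{y}=\set{0}$) shows the fibrewise bundle $(\Gl(M),M\mint B,\pi_p\otimes\mathrm{id}_B)$ is discontinuous at $p_0$, giving (ii).

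For the final assertion, suppose $B$ is prime. Then $\Pm(B)$ is irreducible, so every $f\in C^b(\Pm(B))$ is constant and $\Gl(B)$ is a single point with Glimm ideal $\set{0}$; consequently $\Gl(M\mint B)=\Gl(M)\times\Gl(B)=\beta\mathbb N$ with Glimm ideals $\Delta(G_p,\set{0})=G_p\mint B$ by \cite{m_glimm}. Condition (i) of quasi-standardness (continuity of the Dauns--Hofmann bundle over $\Gl(M\mint B)$) is exactly the continuity established in part (i). For condition (ii) I must show each $G_p\mint B$ is primal. At the (dense) finite points $n\in\mathbb N$ the fibre $M/G_n=M_{k_n}$ is nuclear, so $\Phi(G_n,\set{0})=\Delta(G_n,\set{0})$ and the tensor-product fibre is $M_{k_n}\mint B\cong M_{k_n}(B)$, which is prime because $B$ is; thus $G_n\mint B$ is a prime, hence primal, ideal. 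For arbitrary $p\in\beta\mathbb N$, choose a net $n_\lambda\to p$ in $\mathbb N$; continuity of the $C_0(\beta\mathbb N)$-algebra $M\mint B$ gives $\norm{c+\Delta(G_{n_\lambda},\set{0})}\to\norm{c+\Delta(G_p,\set{0})}$ for all $c$. A norm-limit of prime ideals is primal: if ideals $K_1,\dots,K_m$ of $M\mint B$ satisfy $K_1\cdots K_m=\set{0}$ but none lies in $\Delta(G_p,\set{0})$, pick $c_i\in K_i$ with $\norm{c_i+\Delta(G_p,\set{0})}>0$; then for all large $\lambda$ simultaneously $c_i\notin\Delta(G_{n_\lambda},\set{0})$, so by primeness $K_1\cdots K_m\not\subseteq\Delta(G_{n_\lambda},\set{0})$, contradicting $K_1\cdots K_m=\set{0}$. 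Hence every Glimm ideal of $M\mint B$ is primal and $M\mint B$ is quasi-standard.

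The main obstacle I anticipate is the first step: extracting the von Neumann algebra $M$ from inexactness of $B$. This is not a soft argument, since an arbitrary short exact sequence witnessing inexactness need not involve von Neumann blocks; it rests on the nontrivial fact that inexactness is already detected by an $\ell^{\infty}/c_0$-sequence of full matrix algebras, for which $\prod_n M_{k_n}$ is a von Neumann algebra with $\Gl(\cdot)=\beta\mathbb N$ and nuclear fibres over $\mathbb N$. The localisation of the second step and the primality argument of the third are then comparatively routine, resting only on Lemma~\ref{l:closed}, Theorem~\ref{t:kirch_wass1}(iii), and the stability of primeness under norm-limits.
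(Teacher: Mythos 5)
Your proposal is correct, and its skeleton is the paper's own: the paper likewise takes $M=\prod_n M_n(\mathbb{C})$ (citing Kirchberg's Fubini-type result~\cite{kirch_fubini} for the inexactness of $0\to\bigoplus_n M_n\mint B\to M\mint B\to (M/\bigoplus_n M_n)\mint B\to 0$), localises the failure of exactness to a single point of $\beta\mathbb{N}\setminus\mathbb{N}$ via Lemma~\ref{l:closed}, and obtains (i) from Proposition~\ref{p:vna}. Two of your steps diverge in detail, both harmlessly and one to your advantage. First, on the $\Phi$-side the paper invokes~\cite[Lemma 2.2]{lazar_tensor} to get the full equality $\ker(\pi_0\otimes\mathrm{id}_B)=\bigcap_{q\in\beta\mathbb{N}\setminus\mathbb{N}}\ker(\pi_q\otimes\mathrm{id}_B)$, whereas you observe that the soft inclusion $\Phi(J_0,\set{0})\subseteq\bigcap_{p\in F}\Phi(G_p,\set{0})$, coming from functoriality of $\mint$ applied to the factored quotient maps, already closes the squeeze against Lemma~\ref{l:closed}(ii) and $\Delta\subseteq\Phi$; that is a mild economy (correspondingly, the paper deduces discontinuity from~\cite[Proposition 2.7]{kirch_wass} where you use Theorem~\ref{t:kirch_wass1}(iii), an equivalent route since both bundles in play are continuous). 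Second, and more substantively, for the final assertion the paper quotes~\cite[Theorem 3.4 (iii)$\Rightarrow$(i)]{arch_som_qs} as a black box, using only continuity of the Dauns--Hofmann bundle together with primeness of the fibres $M_n\mint B\cong M_n(B)$ over the dense set $\mathbb{N}$, while you re-derive the needed special case by hand, verifying directly that every Glimm ideal $G_p\mint B$ is primal as a ``norm-limit'' of the prime ideals $G_n\mint B$; your finite-product primality argument is sound (continuity of $q\mapsto\norm{c+K_q}$ from part (i), density of $\mathbb{N}$ in $\beta\mathbb{N}$, primeness of $K_n$ via nuclearity of the matrix fibre, and induction on the number of factors). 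What your version buys is self-containedness; what the paper's citation buys is brevity. Your anticipated main obstacle, that inexactness is already detected by a $\prod/\bigoplus$ sequence of full matrix algebras, is precisely the one external ingredient the paper also leans on, so your assessment of where the real content lies is accurate.
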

\begin{proof}
Let $M = \prod_{n=1}^{\infty} M_n ( \mathbb{C} )$.  Then $M$ is a von Neumann algebra, hence it is quasi-standard by~\cite[Section 5]{arch_primal}.  Moreover, $Z(M)$ consists of the sequences $(\lambda_n 1_n)_{n=1}^{\infty} \in M$, where $\lambda_n \in \mathbb{C}$ and $1_n $ is the $n \times n$ identity matrix.   It follows that $\Gl (M) = \Pm (Z(M))$ is canonically homeomorphic to $\beta \mathbb{N}$.

Since $B$ is inexact, the sequence
\begin{equation}
\tag{$\dagger$}
0 \rightarrow I_0 \mint B \rightarrow M \mint B \rightarrow (M / I_0) \mint B \rightarrow 0
\end{equation}
is inexact by~\cite{kirch_fubini}.  We claim that there is some $q \in \beta \mathbb{N}$ for which $G_q \mint B \subsetneq \ker ( \pi_q \otimes \mathrm{id}_B )$.  Suppose not. By~\cite[Lemma 2.2]{lazar_tensor}
\[
\ker ( \pi_0 \otimes \mathrm{id}_B ) = \bigcap_{q \in \beta \mathbb{N} \backslash \mathbb{N}} \ker ( \pi_q \otimes \mathrm{id}_B ) ,
\] 
and by Lemma~\ref{l:closed}
\[
I_0 \mint B = \bigcap_{q \in \beta \mathbb{N} \backslash \mathbb{N}} I_q \mint B. 
\]
Thus if $I_q \mint B = \ker ( \pi_q \otimes \mathrm{id}_B )$ for all $q \in \beta \mathbb{N} \backslash \mathbb{N}$, the above intersections would agree, which would imply that $(\dagger)$ were exact, which is not the case.

Thus  $(M \mint B , \beta \mathbb{N} , \pi_q \otimes \mathrm{id}_B : M \mint B \rightarrow M_q \mint B )$ is discontinuous at some point $p \in \beta \mathbb{N} \backslash \mathbb{N}$ by~\cite[Proposition 2.7]{kirch_wass}.

On the other hand, by Proposition~\ref{p:vna}, the $C( \Gl(M) )$-algebra $(M \mint B , \Gl (M) , \theta_M \otimes 1)$ is continuous.

Under the additional assumption that the zero ideal of $B$ is prime, then necessarily we have $\Gl (B) = \{ 0 \}$ by~\cite[Lemma 2.2]{arch_som_qs}, and so $\Gl (M \mint B )$ is homeomorphic to $\beta \mathbb{N}$ in the obvious way.  In particular, $(M \mint B , \Gl (M) , \theta_M \otimes 1 )$ corresponds to the Dauns-Hofmann representation of $M \mint B$, and the fibre algebras are prime throughout a dense subset of $\Gl (M \mint B)$, namely the points of $\mathbb{N}$.  By~\cite[Theorem 3.4 (iii)$\Rightarrow$(i)]{arch_som_qs}, $M \mint B$ is quasi-standard.

\end{proof}

\section{Continuity and exactness of the $C_0 (X \times Y)$-algebra $A\mint B$}

In this section we investigate the relationship between exactness of a continuous $C_0 (X)$-algebra $(A , X , \mu_A )$ and its minimal tensor product with an arbitrary continuous $C_0 (Y)$-algebra $(B,Y,\mu_B)$. The corresponding result regarding continuity of fibrewise tensor products of C$^{\ast}$-bundles was obtained by Kirchberg and Wassermann in~\cite{kirch_wass}:

\begin{theorem}\emph{(E. Kirchberg, S. Wassermann~\cite[Theorem 4.5]{kirch_wass})}
The following conditions on a C$^{\ast}$-algebra $B$ are equivalent:
\begin{enumerate}
\item[(i)] $B$ is exact,
\item[(ii)] For every locally compact Hausdorff space $X$ and continuous C$^{\ast}$-bundle $\mathscr{A} = ( X , A , \pi_x : A \rightarrow A_x )$ over $X$, the fibrewise tensor product $\mathscr{A} \mint B$ is continuous,
\item[(iii)] For every separable, unital continuous C$^{\ast}$-bundle $\mathscr{A} = ( \hat{\mathbb{N}} , A , \pi_n : A \rightarrow A_n )$ over $\hat{\mathbb{N}}$, the fibrewise tensor product $\mathscr{A} \mint B$ is continuous.
\end{enumerate}
\end{theorem}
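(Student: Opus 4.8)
The plan is to establish the cycle of implications (i) $\Rightarrow$ (ii) $\Rightarrow$ (iii) $\Rightarrow$ (i). The implication (ii) $\Rightarrow$ (iii) is immediate, since a separable unital continuous C$^{\ast}$-bundle over $\hat{\mathbb{N}}$ is a special case of the bundles quantified over in (ii), taking $X = \hat{\mathbb{N}}$. Thus the substance lies in (i) $\Rightarrow$ (ii) and, principally, in (iii) $\Rightarrow$ (i).

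For (i) $\Rightarrow$ (ii), I would regard the fixed exact algebra $B$ as a (trivially continuous) C$^{\ast}$-bundle $\mathscr{B}$ over a one-point space $\{ y_0 \}$, so that $\mathscr{A} \mint B$ is the fibrewise tensor product $\mathscr{A} \mint \mathscr{B}$ over $X \times \{ y_0 \} \cong X$ in the sense of Theorem~\ref{t:kirch_wass1}, cf. Remark~\ref{r:tensorbundle}(ii). By Theorem~\ref{t:kirch_wass1}(iii), continuity of $\mathscr{A} \mint B$ at a point $x_0 \in X$ is equivalent to
\[
\Phi ( \ker \pi_{x_0} , \{ 0 \} ) = \Delta ( \ker \pi_{x_0} , \{ 0 \} ),
\]
that is, to $\ker ( \pi_{x_0} \otimes \mathrm{id}_B ) = \ker ( \pi_{x_0} ) \mint B$. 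But applying the definition of exactness of $B$ to the short exact sequence $0 \to \ker \pi_{x_0} \to A \to A_{x_0} \to 0$ yields exactly this identification of kernels. Since $x_0$ was arbitrary, $\mathscr{A} \mint B$ is continuous at every point, and hence continuous.

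The implication (iii) $\Rightarrow$ (i) is the crux, and I would argue contrapositively. Assuming $B$ is not exact, there is a short exact sequence $0 \to J \to E \to E/J \to 0$ for which $\ker ( \mathrm{id}_B \otimes q ) \supsetneq B \mint J$. Since inexactness is detected on separable subalgebras, I would first reduce to the case where $E$ is separable and unital, by passing to a separable subalgebra carrying a witness of inexactness and adjoining a unit. The goal is then to manufacture from this sequence a separable unital \emph{continuous} C$^{\ast}$-bundle $\mathscr{A}$ over $\hat{\mathbb{N}}$ whose fibre at the point $\infty$ realises the quotient map $q$, in such a way that, via the criterion of Theorem~\ref{t:kirch_wass1}(iii) applied at $\infty$, continuity of $\mathscr{A} \mint B$ forces
\[
\ker ( \pi_{\infty} \otimes \mathrm{id}_B ) = \ker ( \pi_{\infty} ) \mint B,
\]
and hence exactness of $0 \to B \mint J \to B \mint E \to B \mint (E/J) \to 0$, contradicting the choice of the sequence. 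Discontinuity of $\mathscr{A} \mint B$ at $\infty$ would then contradict (iii).

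The main obstacle is precisely the construction of this continuous bundle. The naive field over $\hat{\mathbb{N}}$ with fibre $E$ at each finite point and $E/J$ at $\infty$ fails to be norm-continuous, since the norm of a section typically drops on passing to the quotient fibre; it is merely upper-semicontinuous. Overcoming this requires interpolating the fibres --- for instance using a quasicentral approximate unit $(u_n)$ for $J$ in $E$ to deform the finite fibres continuously onto $E/J$ --- and then verifying both that the resulting field is genuinely continuous and that its evaluation-at-$\infty$ ideal $\ker \pi_{\infty}$ captures the failure $\ker ( \mathrm{id}_B \otimes q ) \supsetneq B \mint J$ after tensoring with $B$. This interpolation and the attendant norm estimates constitute the technical heart of the argument.
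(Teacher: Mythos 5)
First, a point of reference: the paper does not prove this theorem at all --- it is quoted verbatim from Kirchberg and Wassermann~\cite[Theorem 4.5]{kirch_wass}, and even where the paper needs the hard content (the passage from inexactness of $B$ to a discontinuous tensor product over $\nhat$) it imports it as a black box, citing \cite[Proposition 4.2]{kirch_wass} in the first line of the proof of Proposition~\ref{p:disc}. So your attempt can only be compared with the original argument and with the paper's parallel machinery. Your implications (ii)$\Rightarrow$(iii) and (i)$\Rightarrow$(ii) are correct: viewing $B$ as a bundle over a point and applying Theorem~\ref{t:kirch_wass1}(iii) at each $x_0$, with exactness of $B$ applied to $0 \to \ker \pi_{x_0} \to A \to A_{x_0} \to 0$ giving $\ker(\pi_{x_0} \otimes \mathrm{id}_B) = \ker(\pi_{x_0}) \mint B$, is legitimate and non-circular (Archbold's pointwise criterion does not rest on the theorem being proved), and this is essentially how the paper itself argues in the (i)$\Rightarrow$(iv) step of Theorem~\ref{t:disc}.

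The genuine gap is in (iii)$\Rightarrow$(i), which in your write-up is a plan rather than a proof, and the plan as stated would fail at its central point. You propose to ``deform the finite fibres continuously onto $E/J$'' using a quasicentral approximate unit $(u_n)$; but the natural deformation implementing the norm formula $\norm{q(a)} = \lim_n \norm{(1-u_n)^{1/2} a (1-u_n)^{1/2}}$ is not multiplicative, so it does not furnish the surjective $\ast$-homomorphisms $\pi_n : A \to A_n$ required by the definition of a C$^{\ast}$-bundle. The actual construction of \cite[Proposition 4.2]{kirch_wass} instead takes the C$^{\ast}$-algebra \emph{generated} by such deformed constant sections together with sections vanishing at $\infty$, and then must prove --- via Arveson-type quasicentrality estimates --- that the norm function of \emph{every} element of the generated algebra converges at $\infty$ to the norm of its image in $E/J$; this is precisely the technical heart you acknowledge but do not supply. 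Two further steps are asserted without argument: (a) the reduction to a separable unital witness $E$ (true, and routine via injectivity of $\mint$ and $E_0/(J \cap E_0) \hookrightarrow E/J$, but it is not the same statement as ``exactness is separably determined,'' which concerns subalgebras of $B$); and (b) the claim that continuity of $\mathscr{A} \mint B$ at $\infty$ forces exactness of the original sequence --- since $\ker \pi_{\infty}$ is an ideal of the section algebra, not $J$ itself, one must exhibit a concrete element of $A \odot B$ whose norm at $\infty$ is computed by the exotic norm $\gamma$ on $(E/J) \odot B$ while its finite-fibre norms converge to the minimal norm, in the manner of the displayed computation (via Lemmas~\ref{l:cstnorm} and~\ref{l:nbhd}) in the paper's Proposition~\ref{p:disc}. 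In short: the architecture of your proof matches the known one, but the implication that carries all the weight is left unexecuted, and the one concrete mechanism you suggest for it does not work as described.
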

While Theorem~\ref{t:ineq} shows that continuity of a $C_0 (X \times Y)$-algebra of the form $(A \mint B , X \times Y , \mu_A \otimes \mu_B )$ is a  strictly weaker property than continuity of the corresponding fibrewise tensor product, a discontinuous example was already exhibited by Blanchard in~\cite{blanch_exact}.  The construction of this counterexample depends heavily on the specific properties of the algebras involved.  Our main result of this section, Theorem~\ref{t:disc}, shows that this pathology is in some sense universal; more precisely, we show that $(A,X,\mu_A)$ is exact if and only if the  $C_0 (X \times Y)$-algebra $(A \mint B , X \times Y , \mu_A \otimes \mu_B )$ is continuous for each continuous $C_0 (Y)$-algebra $(B,Y,\mu_B)$. 

The following two lemmas are known, we include a proof for completness.
\begin{lemma}
\label{l:cstnorm}
Let $A$ and $B$ be C$^{\ast}$-algebras and $(I,J) \in \mathrm{Id}'(A) \times \mathrm{Id}' (B)$.  Then the quotient C$^{\ast}$-algebra $A \mint B / \Delta (I,J)$ is naturally isomorphic to $(A/I) \otimes_{\gamma} (B/J)$, where $\norm{\cdot}_{\gamma}$ is a C$^{\ast}$-norm on $(A/I) \odot (B/J)$.  Moreover, $\norm{\cdot}_{\gamma} = \norm{\cdot}_{\alpha}$ if and only if $\Phi(I,J) = \Delta (I,J)$.
\end{lemma}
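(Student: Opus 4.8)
The plan is to realise $A \mint B / \Delta(I,J)$ as the completion of the algebraic tensor product $(A/I) \odot (B/J)$ under a suitable C$^\ast$-norm, and to control that norm by comparing it with the minimal norm via the containment $\Delta(I,J) \subseteq \Phi(I,J)$. First I would record the two facts that drive the argument. Since $q_I \otimes q_J$ annihilates both $I \mint B$ and $A \mint J$, we have $\Delta(I,J) \subseteq \Phi(I,J) = \ker(q_I \otimes q_J)$; and by the remark following (\ref{e:delta}), $\Delta(I,J)$ is exactly the closure in $A \mint B$ of $K := \ker(q_I \odot q_J)$, the kernel of the algebraic map $q_I \odot q_J : A \odot B \to (A/I) \odot (B/J)$. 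By the first isomorphism theorem for $\ast$-algebras, $(A \odot B)/K \cong (A/I) \odot (B/J)$, and $q_I \otimes q_J$ is a surjection onto $(A/I)\mint(B/J)$, so that $A \mint B/\Phi(I,J) \cong (A/I)\mint(B/J)$.

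Next I would define $\|\cdot\|_\gamma$ and verify it is a C$^\ast$-norm. Restricting the quotient C$^\ast$-seminorm $z \mapsto \mathrm{dist}_\alpha(z, \Delta(I,J))$ of $A \mint B/\Delta(I,J)$ to the dense $\ast$-subalgebra $A \odot B$ gives a C$^\ast$-seminorm; because $\Delta(I,J)$ is the closure of $K$ this seminorm equals $z \mapsto \mathrm{dist}_\alpha(z, K)$, and in particular it vanishes on $K$, so it descends to a C$^\ast$-seminorm $\|\cdot\|_\gamma$ on $(A \odot B)/K = (A/I) \odot (B/J)$. Since the image of $A \odot B$ is dense in $A \mint B/\Delta(I,J)$, once $\|\cdot\|_\gamma$ is shown to be a genuine norm its completion $(A/I) \otimes_\gamma (B/J)$ is naturally $\ast$-isomorphic to $A \mint B/\Delta(I,J)$.

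The main obstacle is definiteness of $\|\cdot\|_\gamma$, and this is precisely where the containment $\Delta(I,J) \subseteq \Phi(I,J)$ is used. This containment produces a surjective $\ast$-homomorphism $\rho : A \mint B/\Delta(I,J) \to A \mint B/\Phi(I,J)$, and under the identification $A \mint B/\Phi(I,J) \cong (A/I) \mint (B/J)$ the map $\rho$ restricts on the image of $A \odot B$ to $q_I \odot q_J$. As $\ast$-homomorphisms are contractive, this yields $\|w\|_\alpha \le \|w\|_\gamma$ for every $w \in (A/I) \odot (B/J)$; since $\|\cdot\|_\alpha$ is definite, so is $\|\cdot\|_\gamma$, which completes the first assertion.

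Finally I would read off the ``moreover'' clause from the same map $\rho$. If $\gamma = \alpha$, then $\rho$ is isometric on the dense subalgebra $(A/I) \odot (B/J)$, hence isometric and in particular injective; since $\ker \rho = \Phi(I,J)/\Delta(I,J)$, injectivity forces $\Phi(I,J) = \Delta(I,J)$. Conversely, if $\Phi(I,J) = \Delta(I,J)$, then $A \mint B/\Delta(I,J) = A \mint B/\Phi(I,J) \cong (A/I) \mint (B/J)$, so $\gamma = \alpha$ by uniqueness of the C$^\ast$-norm extending $\|\cdot\|_\alpha$ on the dense subalgebra.
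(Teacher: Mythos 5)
Your proof is correct, and while it shares the paper's overall skeleton -- realising $A \mint B / \Delta(I,J)$ as the completion of $(A/I) \odot (B/J)$ under the norm induced by the quotient -- it differs from the paper's argument at the two places where real work is done. For definiteness of $\norm{\cdot}_{\gamma}$, the paper argues that $\gamma(z)=0$ forces a lift $c$ into $\Delta(I,J) \cap (A \odot B)$ and then invokes the identity $\Delta(I,J) \cap (A \odot B) = \ker(\pi_I \odot \pi_J)$, which it states without proof; you instead derive definiteness from the contractive comparison surjection $\rho : A \mint B/\Delta(I,J) \to A \mint B/\Phi(I,J) \cong (A/I) \mint (B/J)$ afforded by $\Delta(I,J) \subseteq \Phi(I,J)$, obtaining the a priori inequality $\norm{w}_{\alpha} \leq \norm{w}_{\gamma}$. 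This is arguably more self-contained: the intersection identity the paper quotes is itself most easily proved by exactly your inequality, and the bound $\alpha \leq \gamma$ is a useful by-product (it is what Proposition~\ref{p:disc} implicitly exploits when it finds $y$ with $\norm{y}_{\gamma} > \norm{y}_{\alpha}$). Second, to identify the quotient with $(A/I) \otimes_{\gamma} (B/J)$, the paper extends $\pi_I \odot \pi_J$ to a surjection $\pi_I \otimes_{\gamma} \pi_J$ and computes $\ker(\pi_I \otimes_{\gamma} \pi_J) = \Delta(I,J)$ by an explicit $\varepsilon/2$ approximation; you bypass this by observing that the quotient C$^{\ast}$-algebra is already complete and contains the image of $(A/I) \odot (B/J)$ densely and $\gamma$-isometrically, so it \emph{is} the completion -- shorter, and valid since distance to a set equals distance to its closure, which justifies your claim that the seminorm on $A \odot B$ equals $\mathrm{dist}_{\alpha}(\cdot, K)$. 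Your treatment of the ``moreover'' clause (equality of norms iff $\rho$ is isometric on a dense subalgebra iff $\rho$ is injective iff $\Phi(I,J) = \Delta(I,J)$, with the converse immediate from the identification $A \mint B/\Phi(I,J) \cong (A/I)\mint(B/J)$) matches the paper's in substance.
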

\begin{proof}

Let $\pi_I: A \rightarrow A/I$ and $\pi_J : B \rightarrow B/J$ be the quotient maps.   We remark that if $(\pi_I \odot \pi_J ) : A \odot B \rightarrow (A/I) \odot (B/J)$ denotes the canonical algebraic $\ast$-homomorphism, then the closure of its kernel in $A \mint B$ is $\overline{\ker (\pi_I \odot \pi_J )} = \Delta(I,J)$. 

Take $z=\sum_{i=1}^n x_i \otimes y_i \in (A/I) \odot (B/J)$ and choose $a_1 ,\ldots , a_n \in A$ and $b_1 , \ldots , b_n \in B$ such that $\pi_I (a_i) = x_i$ and $\pi_J (b_i) = y_i$ for $1 \leq i \leq n$ and set $c = \sum_{i=1}^n a_i \otimes b_i$, so that $(\pi_I \odot \pi_J )(c) = z$.  Define $\gamma: (A/I) \odot (B/J) \rightarrow [0, \infty)$ via $\gamma (z) = \norm{c + \Delta (I,J)}$.  Then $\gamma$ is well-defined since if $c' \in A \odot B$ also satisfies $(\pi_I \odot \pi_J)(c')=z$, then $c-c' \in \ker ( \pi_I \odot \pi_J ) \subseteq \Delta(I,J)$, hence
\[
\gamma (c') = \gamma (c'-c+c) = \norm{c'-c+c + \Delta(I,J)} = \norm{c+\Delta(I,J)} = \gamma (c).
\]
Clearly $\gamma$ is a seminorm, and since $z^{\ast}z = (\pi_I \odot \pi_J)(c^{\ast}c)$, we have
\[
\gamma(z^{\ast}z) = \norm{c^{\ast}c+\Delta(I,J)} = \norm{(c+\Delta(I,J))^*(c+\Delta(I,J))} = \norm{c+\Delta (I,J)}^2 = \gamma (z)^2,
\]
(by the C$^{\ast}$-condition on the quotient norm), so $\gamma$ is a C$^{\ast}$-seminorm.  Finally, if $\gamma(z)=0$ then $c \in \Delta(I,J) \cap A \odot B = \ker (\pi_I \odot \pi_J)$, so that $z=0$ in $(A/I) \odot (B/J)$.  It follows that $\gamma$ is a well-defined C$^{\ast}$-norm on $(A/I) \odot (B/J)$.

It follows that $\pi_I \odot \pi_J : A \odot_{\alpha} B \rightarrow (A/I) \odot_{\gamma} (B/J)$ is a bounded, surjective $\ast$-homomorphism of normed $\ast$-algebras, and hence extends to a $\ast$-homomorphism $\pi_I \otimes_{\gamma} \pi_J : A \mint B \rightarrow (A/I) \otimes_{\gamma} (B/J)$.  Since the range of $\pi_I \otimes_{\gamma} \pi_J$ is closed and contains the dense set  $(A/I) \odot (B/J)$, it is surjective.  We claim that $\ker ( \pi_I \otimes_{\gamma} \pi_J) = \Delta (I,J)$.

Since $\ker(\pi_I \otimes_{\gamma} \pi_J)$ is closed and contains $I \odot B + A \odot J$, it must also contain $\Delta(I,J)$.  To show the reverse inclusion, let $d \in \ker ( \pi_I \otimes_{\gamma} \pi_J )$ and $\varepsilon >0$.  Then there is $c \in A \odot B$ with $\norm{c-d}< \frac{\varepsilon}{2}$.  By the definition of $\norm{\cdot }_{\gamma}$, 
\[ \norm{ c + \Delta(I,J)} = \norm{(\pi_I \otimes_{\gamma} \pi_J)(c)} = \norm{(\pi_I \otimes_{\gamma} \pi_J)(c-d)} < \frac{\varepsilon}{2}  \]
(since $d \in \ker ( \pi_I \otimes_{\gamma} \pi_J)$).  It follows that
\[
\norm{d+ \Delta (I,J)} = \norm{d-c+c+\Delta (I,J)} \leq \norm{d-c} + \norm{c+ \Delta (I,J)} < \frac{\varepsilon}{2} + \frac{\varepsilon}{2} = \varepsilon.
\]
Since $\varepsilon$ was arbitrary, $d \in \Delta(I,J)$.  We have shown that $\ker( \pi_I \otimes_{\gamma} \pi_J) = \Delta(I,J)$, and hence we can conclude that $(A \mint B)/ \Delta(I,J)$ is canonically $\ast$-isomorphic to $(A/I) \otimes_{\gamma}(B/J)$.

For the final assertion, note that  $\norm{\cdot}_{\gamma} = \norm{\cdot}_{\alpha}$ if and only if $\pi_I \otimes_{\gamma} \pi_J$ is  the canonical $\ast$-homomorphism $\pi_I \otimes \pi_J :A \mint B \rightarrow (A/I) \mint (B/J)$, whose kernel is by definition $\Phi(I,J)$.  Hence the two norms are equal if and only if $\ker (\pi_I \otimes_{\gamma} \pi_J)$ (=$\Delta(I,J)$) is equal to $\Phi(I,J)$.   
\end{proof}

\begin{lemma}
\label{l:nbhd}
Let $\mathcal{A} = (X , A , \pi_x : A \rightarrow A_x )$ be a C$^{\ast}$-bundle and $x_0 \in X$.  Then for each $a \in A$
\[
\| a + I_{x_0} \| = \inf_W \sup_{x \in W} \| \pi_x (a) \|,
\]
as $W$ ranges over all open neighbourhoods of $x_0 $ in $ X$.
\end{lemma}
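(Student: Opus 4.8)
The plan is to establish the two inequalities separately. The single most useful fact will be that faithfulness of the family $\{\pi_x\}$ makes the $*$-homomorphism $a \mapsto (\pi_x(a))_{x \in X}$ into $\prod_{x} A_x$ injective, and therefore isometric; thus $\|c\| = \sup_{x \in X}\|\pi_x(c)\|$ for every $c \in A$. I would combine this with the bundle axiom $\pi_x(\mu_A(f)a) = f(x)\pi_x(a)$ and with the corresponding identity $q_{x_0}(\mu_A(f)a) = f(x_0)q_{x_0}(a)$ for the fibre map $q_{x_0} : A \to A/I_{x_0}$ of the upper-semicontinuous bundle supplied by Proposition~\ref{p:c0xbundles}(i).

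For the inequality $\|a + I_{x_0}\| \le \inf_W \sup_{x \in W}\|\pi_x(a)\|$, I would fix an open neighbourhood $W$ of $x_0$ and, using local compactness of $X$, choose $f \in C_0(X)$ with $0 \le f \le 1$, $f(x_0) = 1$ and $\mathrm{supp}\,f \subseteq W$. Since $f(x_0) = 1$ gives $q_{x_0}(\mu_A(f)a) = q_{x_0}(a)$, we obtain $\|a + I_{x_0}\| = \|\mu_A(f)a + I_{x_0}\| \le \|\mu_A(f)a\|$. The isometry property then yields $\|\mu_A(f)a\| = \sup_{x}|f(x)|\,\|\pi_x(a)\| \le \sup_{x \in W}\|\pi_x(a)\|$, because $f$ vanishes off $W$ and $|f| \le 1$. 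Taking the infimum over all such $W$ gives this half.

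For the reverse inequality I would argue by approximation. Given $\varepsilon > 0$, choose $z \in I_{x_0}$ with $\|a - z\| < \|a + I_{x_0}\| + \varepsilon$; by the Cohen--Hewitt factorisation theorem and the description $I_{x_0} = \mu_A(\{h : h(x_0)=0\})A$, such a $z$ may be taken of the form $\mu_A(h)a'$ with $h \in C_0(X)$ and $h(x_0) = 0$. Then $\pi_x(z) = h(x)\pi_x(a')$, and continuity of $h$ at $x_0$ (where it vanishes) produces a neighbourhood $W$ of $x_0$ on which $\|\pi_x(z)\| < \varepsilon$. On $W$ we then have $\|\pi_x(a)\| \le \|\pi_x(a - z)\| + \|\pi_x(z)\| \le \|a - z\| + \varepsilon < \|a + I_{x_0}\| + 2\varepsilon$, so that $\inf_W \sup_{x \in W}\|\pi_x(a)\| \le \|a + I_{x_0}\| + 2\varepsilon$; letting $\varepsilon \to 0$ finishes the argument.

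The point to watch is that the given bundle is not assumed upper-semicontinuous, so $\ker \pi_x$ may strictly contain $I_x$ and $\|\pi_x(a)\|$ may fall below $\|a + I_x\|$ at individual points. Consequently the value $\|a + I_{x_0}\|$ cannot in general be read off from $\|\pi_{x_0}(a)\|$ alone, and must instead be recovered from the behaviour of $\pi_x$ on a full neighbourhood of $x_0$; the isometry $\|c\| = \sup_{x}\|\pi_x(c)\|$ coming from faithfulness is exactly the device that makes this recovery possible, and keeping careful track of it is the main thing the proof has to get right.
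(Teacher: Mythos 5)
Your proof is correct, but its second half takes a genuinely different route from the paper's. The easy inequality $\norm{a+I_{x_0}} \le \inf_W \sup_{x\in W}\norm{\pi_x(a)}$ is obtained in both arguments by the same device: a Urysohn function $f$ vanishing off $W$ with $f(x_0)=1$, the identity $\mu_A(f)a+I_{x_0}=a+I_{x_0}$ (which, as in the paper's Lemma~\ref{l:subcx}(ii), follows from $I_{x_0}=\bigcap\{P:\phi_A(P)=x_0\}$ and the Dauns--Hofmann relation), and the isometry $\norm{c}=\sup_x\norm{\pi_x(c)}$ coming from faithfulness --- though the paper proves the stronger intermediate statement $\sup_{x\in U}\norm{a+I_x}=\sup_{x\in U}\norm{\pi_x(a)}$ for \emph{every} open $U$, running the trick at each point $x_1\in U$ rather than only at $x_0$. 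For the reverse inequality the paper argues by contradiction: it imports upper semicontinuity of $x\mapsto\norm{a+I_x}$ from Proposition~\ref{p:c0xbundles}(i) to force $\sup_{x\in U}\norm{a+I_x}$ strictly below the infimum $\alpha$ on a small $U$, contradicting the sup-equality. You instead give a direct $\eps$-argument: pick a nearly optimal $z\in I_{x_0}$, factor it as $\mu_A(h)a'$ with $h(x_0)=0$ (legitimate, since the paper's definition $I_{x_0}=\mu_A(\{h:h(x_0)=0\})A$ carries no closure, precisely by Cohen--Hewitt), and use continuity of the scalar function $h$ to make $\norm{\pi_x(z)}\le \abs{h(x)}\,\norm{a'}$ small on a neighbourhood of $x_0$. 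This is in effect an inline proof of the very upper semicontinuity the paper cites as a black box, so your argument is more self-contained and avoids the contradiction, at the modest cost of not recovering the paper's stronger sup-identity on arbitrary open sets. Your closing remark is also well taken: since the bundle is not assumed upper semicontinuous, $\ker\pi_x$ may strictly contain $I_x$, which is exactly why both proofs must work with neighbourhoods of $x_0$ and the global isometry rather than with $\norm{\pi_{x_0}(a)}$ alone.
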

\begin{proof}
Fix an open neighbourhood $U$ of $x_0$ in $X$.  We first claim that
\[
\sup_{x \in U } \| a + I_x \| = \sup_{x \in U} \| \pi_x (a) \|.
\]
It is clear that $\ker ( \pi_x ) \supseteq I_x$, hence we have $\| \pi_x (a) \| \leq \| a + I_x \|$ for all $x \in U$ and so the supremum on the left is always greater than or equal to that on the right.

Let $x_1 \in U$ and choose $f \in C_0 (X)$, $0\leq f \leq 1$, with $f (x_1) =1$ and $f(X \backslash U ) \equiv 0 $, then $ \| a + I_{x_1} \| = \| f \cdot a + I_{x_1} \|$.  Moreover,
\[
\| f \cdot a \|  = \sup_{x \in X } \| \pi_x (f \cdot a ) \| = \sup_{x \in U} \| \pi_x ( f \cdot a ) \| = \sup_{x \in U} | f (x) | \cdot \| \pi_x (a) \| \leq \sup_{x \in U } \| \pi_x (a) \|,
\]
whence
\[
\| a + I_{x_1} \| = \| f \cdot a + I_{x_1} \| \leq \| f \cdot a \| \leq \sup_{x \in U } \| \pi_x (a) \|.
\]
It follows that
\[
\sup_{x \in U } \| a + I_{x} \| \leq \sup_{x \in U } \| \pi_x (a) \|,
\]
and so 
\[
\sup_{x \in U } \| a + I_{x} \| = \sup_{x \in U } \| \pi_x (a) \|.
\]

Suppose for a contradiction that
\[
\alpha : = \inf_W \sup_{x \in W } \| \pi_x (a) \| > \| a + I_{x_0} \|.
\]
Then since $x \mapsto \| a + I_x \|$ is upper semicontinuous on $X$, we could find an open neighborhood $U$ of $x_0$ such that
\[
\| a + I_x \| < \left( \frac{\alpha+ \| a + I_{x_0} \|}{2}  \right) \mbox{ for all } x \in U.
\]
But this would then imply that 
\[
\sup_{x \in U} \| a + I_x \|  \leq  \left( \frac{\alpha+ \| a + I_{x_0} \| }{2}  \right) < \alpha \leq \sup_{x \in U} \| \pi_x (a) \|,
\]
contradicting the fact that these suprema must be equal for all open neighbourhoods $U$ of $x_0$.
\end{proof}

\begin{proposition}
\label{p:disc}
Let $B$ be an inexact C$^{\ast}$-algebra.  Then there is a separable unital $C( \nhat )$-algebra $(  A, \nhat ,  \mu_A )$ such that the $C( \nhat )$-algebra $( A \mint B , \nhat ,  \mu_A \otimes 1 )$ is discontinuous at $\infty$.
\end{proposition}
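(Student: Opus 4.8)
The plan is to manufacture the required $C(\nhat)$-algebra from the continuous bundle provided by condition (iii) of the Kirchberg--Wassermann theorem above, and then to ``spread out'' its limit fibre along a sequence of isolated points, thereby converting the discontinuity of the \emph{fibrewise} product into a failure of lower-semicontinuity of the $\Delta$-bundle.

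First I would apply \cite[Theorem 4.5(iii)]{kirch_wass}: since $B$ is inexact, there is a separable unital continuous C$^{\ast}$-bundle $\mathscr{A} = (\nhat , A , \pi_n : A \to A_n)$ over $\nhat$ whose fibrewise tensor product $\mathscr{A} \mint B$ is discontinuous. Every point of $\mathbb{N}$ is isolated in $\nhat$, so the discontinuity must occur at $\infty$. By Theorem~\ref{t:kirch_wass1}(ii) the bundle $\mathscr{A} \mint B$ is lower-semicontinuous, so failure of continuity at $\infty$ is failure of upper-semicontinuity: there is $c \in A \mint B$ with
\[
\delta := \limsup_{n \to \infty} \norm{(\pi_n \otimes \mathrm{id}_B)(c)} > \norm{(\pi_\infty \otimes \mathrm{id}_B)(c)} =: s .
\]
Here $A$ is a continuous $C(\nhat)$-algebra with $I_n = \ker \pi_n$ by Proposition~\ref{p:c0xbundles}, and at each isolated point $n$ the indicator $1_{\{n\}} \in C(\nhat)$ gives a central projection in $ZM(A)$; hence $I_n$ is a direct summand and $\Phi (I_n , \{0\}) = \Delta (I_n , \{0\}) = I_n \mint B$, so by Theorem~\ref{c:phi=delta}(i) the associated $\Delta$-bundle already coincides with $\mathscr{A}\mint B$ at every finite point.

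Next I would interleave $\mathscr{A}$ with the constant field $A_\infty$ by setting
\[
\tilde{A} = \set{ (a , (d_k)_k) : a \in A,\ d_k \in A_\infty,\ \norm{d_k - \pi_\infty (a)} \to 0 } ,
\]
a separable unital C$^{\ast}$-subalgebra of $A \oplus \ell^\infty (\mathbb{N} , A_\infty)$, regarded as a field over $\nhat$ whose fibre at the isolated point $2k$ is $A_k$, whose fibre at $2k+1$ is $A_\infty$, and whose fibre at $\infty$ is $A_\infty$, with $\tilde\pi_{2k}(a,(d_k)) = \pi_k (a)$, $\tilde\pi_{2k+1}(a,(d_k)) = d_k$ and $\tilde\pi_\infty (a,(d_k)) = \pi_\infty (a)$. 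Continuity of $\mathscr{A}$ gives $\norm{\pi_k (a)} \to \norm{\pi_\infty (a)}$, while $d_k \to \pi_\infty (a)$ gives $\norm{d_k} \to \norm{\pi_\infty (a)}$, so every norm function of $\tilde{A}$ converges to its value at $\infty$; thus $\tilde{A}$ is a separable unital \emph{continuous} $C(\nhat)$-algebra, and (as every finite point is isolated) $\Phi (\tilde I_m , \{0\}) = \Delta (\tilde I_m , \{0\})$ there, where $\tilde I_x$ denotes the fibre ideals of $\tilde{A}$. Writing $\jmath : A \to \tilde{A}$ for the diagonal $\ast$-homomorphism $\jmath (a) = (a , (\pi_\infty (a))_k)$, I would then test the $\Delta$-bundle $(\tilde{A} \mint B , \nhat , \mu_{\tilde{A}} \otimes 1)$ on $\tilde{c} = (\jmath \otimes \mathrm{id}_B)(c) \in \tilde{A} \mint B$: its fibre norms are $\norm{(\pi_k \otimes \mathrm{id}_B)(c)}$ at the even points and $s$ at the odd points, so $\liminf_m \norm{\tilde{c} + \tilde I_m \mint B} \le s$, whereas the even points force $\limsup_m \norm{\tilde{c} + \tilde I_m \mint B} = \delta$. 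Since the $\Delta$-bundle is upper-semicontinuous, Lemma~\ref{l:nbhd} gives
\[
\norm{\tilde{c} + \tilde I_\infty \mint B} \ge \limsup_{m \to \infty} \norm{\tilde{c} + \tilde I_m \mint B} = \delta > s \ge \liminf_{m \to \infty} \norm{\tilde{c} + \tilde I_m \mint B} ,
\]
so the norm function of $\tilde{c}$ is not lower-semicontinuous at $\infty$; that is, $(\tilde{A} \mint B , \nhat , \mu_{\tilde{A}} \otimes 1)$ is discontinuous at $\infty$, as required, with $\tilde A$ playing the role of $A$ in the statement.

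The main obstacle is conceptual rather than computational. The fibrewise discontinuity at $\infty$ is a priori invisible to the $\Delta$-bundle, since an upward jump of the (lower-semicontinuous) fibrewise norm can be absorbed into the larger $\Delta$-fibre at $\infty$ while the $\Delta$-bundle itself remains continuous --- this is precisely what occurs over $\beta\mathbb{N}$ in Theorem~\ref{t:ineq}. The interleaving step is exactly what blocks this absorption: inserting isolated copies of $A_\infty$ accumulating at $\infty$ drags the lower limit of the fibre norms of $\tilde c$ down to $s$ without disturbing the value $\delta \le \norm{\tilde{c} + \tilde I_\infty \mint B}$ forced at $\infty$ by upper-semicontinuity, turning the fibrewise defect into a genuine oscillation that the $\Delta$-bundle cannot smooth out. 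Once this mechanism is in place, verifying that $\tilde{A}$ is a bona fide continuous field (closedness under the algebraic operations, non-degeneracy of the induced $C(\nhat)$-action, and the identification of $\tilde{c}$ as an element of $\tilde{A} \mint B$ via $\jmath \otimes \mathrm{id}_B$) is routine.
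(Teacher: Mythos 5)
Your proof is correct and is essentially the paper's own argument: your interleaved algebra $\tilde{A}$ is precisely the paper's pullback $A = \set{c \oplus d \in C \oplus D : \sigma_{\infty}(c) = \varepsilon_{\infty}(d)}$ (with $D = C(\nhat) \mint C_{\infty}$ the trivial bundle) written out as a sequence algebra, and your diagonal element $(\jmath \otimes \mathrm{id}_B)(c)$ plays exactly the role of the paper's test element $x = \sum_i a^{(i)} \otimes b^{(i)}$ built from constant-section lifts, with the same endgame (upper semicontinuity via Lemma~\ref{l:nbhd} pins the fibre norm at $\infty$ above $\delta$ while one interleaved subsequence sits at $s < \delta$). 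The only harmless divergences are that you use a general element $c$ witnessing failure of upper semicontinuity of the fibrewise bundle where the paper routes the strict gap through an algebraic tensor $y$ with $\norm{y}_{\gamma} > \norm{y}_{\alpha}$ via Lemma~\ref{l:cstnorm}, and you identify the finite-point fibres of the $\Delta$-bundle by clopen central projections where the paper cites~\cite[Theorem 3.3]{arch_cb}.
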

\begin{proof}
Since $B$ is inexact, by~\cite[Proposition 4.2]{kirch_wass} there is a separable, unital continuous C$^{\ast}$-bundle $\mathcal{C} = (  C , \nhat ,  \sigma_n : C \rightarrow C_n )$ with the property that the minimal fibrewise tensor product $\mathcal{C} \mint B = ( C \mint B , \nhat, \sigma_n \otimes \mathrm{id} : C \mint B \rightarrow C_n \mint B )$ is discontinuous at $\infty$.  

Since $C$ is continuous, for each $n \in \nhat $ we have $I_n  = \ker \sigma_n$.  By~\cite[Theorem 3.3]{arch_cb}, we have $I_{\infty} \mint B \subsetneq \ker ( \sigma_{\infty} \otimes \mathrm{id} )$, while $I_n \mint B = \ker ( \sigma_n \otimes \mathrm{id} )$ for all $n \in \mathbb{N}$.  It follows in particular that $(C \mint B )/(I_{\infty} \mint B )$ is canonically $\ast$-isomorphic to $C_{\infty} \otimes_{\gamma} B$, where $\norm{\cdot}_{\gamma}$ is a C$^{\ast}$-norm on $C_{\infty} \odot B$ distinct from $\norm{\cdot}_{\alpha}$ by Lemma~\ref{l:cstnorm}.

There is thus $y = \sum_{i=1}^{\ell} c_{\infty}^{(i)} \otimes b^{(i)} \in C_{\infty} \odot B$ with the property that $\norm{y}_{\gamma} > \norm{y}_{\alpha}$. Choose $c^{(1)}, \ldots ,c^{(\ell)} \in C$ such that $\sigma_{\infty}(c^{(i)}) = c_{\infty}^{(i)}$ for $1 \leq i \leq \ell$, and  set $\overline{y} = \sum_{i=1}^{\ell} c^{(i)} \otimes b^{(i)} \in C \odot B$.  Then 
\[ \norm{\overline{y} + I_{\infty} \mint B } = \norm{y}_{\gamma} > \norm{y}_{\alpha} = \norm{(\sigma_{\infty} \otimes \mathrm{id})(y)}, \]
by the definitions of $\norm{\cdot}_{\gamma}$ and $\sigma_{\infty} \otimes \mathrm{id}$.

Now let $D = C( \nhat ) \mint C_{\infty} $ be the trivial (hence continuous) bundle on $\nhat$ with fibre $C_{\infty}$, and denote by $\varepsilon_n$ the evaluation maps, where $n \in \nhat$.  Let $A$ be the pullback C$^{\ast}$-algebra in the diagram
\[
\xymatrix{ A \ar[r] \ar[d] & C  \ar^{\sigma_{\infty}}[d] \\
D  \ar_{\varepsilon_{\infty}}[r] & C_{\infty}  \\
}
\]
that is, the C$^{\ast}$-subalgebra of $C \oplus D$ given by
\[
A = \{ c \oplus d \in C \oplus D : \sigma_{\infty}(c) = \varepsilon_{\infty} (d) \}.
\]
Then we have a well-defined $\ast$-homomorphism $\pi_{\infty}: A \rightarrow C_{\infty} $ sending $c \oplus d$ to $\sigma_{\infty} (c) = \varepsilon_{\infty} (d)$~\cite[2.2]{pedersen_pull}.  

For $n \in \mathbb{N}$ we may extend (keeping the same notation) $\sigma_n$ and $\varepsilon_n$ to $A$ by setting
\[
\sigma_n ( c \oplus d ) = \sigma_n (c) , \varepsilon_n ( c \oplus d ) = \varepsilon_n (d).
\]
$A$ defines a continuous C$^{\ast}$-bundle $( A , \nhat ,  \pi_n : A \rightarrow A_n )$ as follows: for $n \in \mathbb{N}$ set $A_{2n-1} = C_n$ and $\pi_{2n-1} = \sigma_n$, $A_{2n} = C_{\infty}$ and $\pi_{2n} = \varepsilon_n$, and $\pi_{\infty}: A \rightarrow A_{\infty} = C_{\infty}$ as above.  Continuity of $A$ follows easily from that of $C$ and $D$:  for $c \oplus d \in A$ we have $\norm{\sigma_n (c) }$ and $\norm{\varepsilon_n (d)}$ both converge to $\norm{\pi_{\infty}(c \oplus d)}$, it follows that $\norm{\pi_n (c \oplus d)} \rightarrow \norm{\pi_{\infty}(c \oplus d)}$.

Regarding $A$ as a $C( \nhat )$-algebra, let $\mu_A: C( \nhat ) \rightarrow Z(A)$ be the structure map; where $f \in C( \nhat )$ acts by pointwise multiplication.  Now $A \otimes B$ is a $C( \nhat )$-algebra with structure map $\mu_A \otimes 1 : C( \nhat ) \otimes \mathbb{C} \rightarrow ZM(A \mint B)$.  For each $n \in \nhat$ we let $K_n$ be the ideal
\[
(\mu \otimes 1) \left( \{ f \in C( \nhat ) : f(n) = 0 \} \otimes \mathbb{C} \right) A \mint B
\]
of $A \mint B$, so that $n \mapsto \norm{ x + K_n }$ is upper-semicontinuous on $\nhat$.  Again it follows from~\cite[Theorem 3.3]{arch_cb} that for $n \in \mathbb{N}$  we have $K_n = \ker ( \pi_n \otimes \mathrm{id} )$, and by Proposition~\ref{p:tensorbundle}(v) we have $K_{\infty} = I_{\infty} \mint B$.

On the other hand the (lower-semicontinuous) C$^{\ast}$-bundle $\mathcal{A} \mint B = ( \nhat , A \mint B , \pi_n \otimes \mathrm{id} : A \mint B \rightarrow A_n \mint B ) $ has fibres $A_n \mint B =  C_{\frac{n+1}{2}} \mint B$ for $n$ odd, $A_n \mint B = C_{\infty} \mint B$ for $n$ even, and $A_{\infty} = C_{\infty} \mint B$.

 For $1 \leq i \leq \ell$ let $d^{(i)} \in D$ be the constant section $\varepsilon_n ( d^{(i)} ) = c^{(i)}_{\infty}$,  set $a^{(i)} = c^{(i)} \oplus d^{(i)} \in A$, and let $x = \sum_{i=1}^{\ell} a^{(i)} \otimes b^{(i)}.$  Then for $n \in \mathbb{N}$ we have
\begin{eqnarray*}
(\pi_n \otimes \mathrm{id} ) (x) & = & (\pi_n \otimes \mathrm{id} )\left( \sum_{i=1}^{\ell} a^{(i)} \otimes b^{(i)} \right) \\
& = & \sum_{i=1}^{\ell} \pi_n (a^{(i)} ) \otimes b^{(i)} \\
& = & \displaystyle \left\{ \begin{array}{rcl}
\sum_{i=1}^{\ell} \sigma_{\frac{n+1}{2}} (c^{(i)} ) \otimes b^{(i)} & \mbox{ if } & n \mbox{ odd} \\
\vspace{5pt} & & \\
\sum_{i=1}^{\ell} \varepsilon_{\frac{n}{2}} (d^{(i)} ) \otimes b^{(i)} & \mbox{ if } & n \mbox{ even}\\
\end{array} \right. \\
& = & \left\{ \begin{array}{rcl}
(\sigma_{\frac{n+1}{2}} \otimes \mathrm{id} )( \overline{y} ) & \mbox{ if } & n \mbox{ odd} \\
\vspace{5pt} & & \\
\sum_{i=1}^{\ell} c_{\infty}^{(i)} \otimes b^{(i)} & \mbox{ if } & n \mbox{ even} \\
\end{array} \right.
\\
\end{eqnarray*} 
 
It then follows that
\[
\norm{ ( \pi_n \otimes \mathrm{id} )(x)}  =  \left\{ \begin{array}{rcl}
\norm{ (\sigma_{\frac{n+1}{2}} \otimes \mathrm{id} )( \overline{y} )} & \mbox{ if } & n \mbox{ odd} \\
\vspace{5pt} & & \\
\norm{ \sum_{i=1}^{\ell} c_{\infty}^{(i)} \otimes b^{(i)} }_{C_{\infty} \mint B} = \norm{y}_{\alpha} & \mbox{ if } & n \mbox{ even}.
\end{array}
\right.
\]

Finally, note that by Lemma~\ref{l:nbhd} we have
\begin{eqnarray*}
\norm{x+K_{\infty}} & = & \max \left( \limsup_n \norm{(\pi_n \otimes \mathrm{id})(x)} , \norm{(\pi_{\infty} \otimes \mathrm{id})(x)} \right) \\
\norm{y}_{\gamma}  = \norm{ \overline{y} + I_{\infty} \mint B } & = & \max \left( \limsup_n \norm{ ( \sigma_n \otimes \mathrm{id} )(\overline{y})} , \norm{(\sigma_{\infty} \otimes \mathrm{id} )(\overline{y})} \right). \\
\end{eqnarray*}
Since we know that $\norm{\sigma_{\infty} \otimes \mathrm{id} )(\overline{y})} = \norm{y}_{\alpha} < \norm{y}_{\gamma}$, the second equality becomes
\[
\norm{y}_{\gamma} = \limsup_n \norm{(\sigma_n \otimes \mathrm{id} )(\overline{y})}.
\]
In particular, we conclude that
\begin{eqnarray*}
\norm{x + K_{\infty}} & \geq & \limsup_n \norm{ (\pi_n \otimes \mathrm{id} )(x) } \\
& \geq & \limsup \{ \norm{ (\pi_n \otimes \mathrm{id} )(x) } : n \mbox{ odd} \}\\
& = & \limsup \{ \norm{ ( \sigma_{\frac{n+1}{2}} \otimes \mathrm{id}) ( \overline{y} )} : n \mbox{ odd } \} \\
& = & \norm{y}_{\gamma}. 
\end{eqnarray*}

Thus for all even $n$  (since $K_n = \ker( \pi_n \otimes \mathrm{id} )$ for $n \in \mathbb{N}$ by continuity at these points) \[ \norm{x + K_n } = \norm{ ( \pi_n \otimes \mathrm{id} )(x) } = \norm{y}_{\alpha} < \norm{y}_{\gamma} \leq \norm{x + K_{\infty}}, \] and it follows that $n \mapsto \norm{x + K_n}$ is discontinuous at $\infty$.

\end{proof}

\begin{corollary}
\label{c:disc}
Let $B$ be an inexact C$^{\ast}$-algebra.  Then there is a  separable unital, continuous $C(\nhat )$-algebra $(\tilde{A} , \nhat , \mu_{\tilde{A}} )$, with $\Pm (\tilde{A})$ homeomorphic to $\nhat$, such that the $C( \nhat )$-algebra $(\tilde{A} \mint B , \nhat , \mu_{\tilde{A}} \otimes 1 )$ is discontinuous at $\infty$.  Moreover, $\mathrm{Prim} ( \tilde{A} )$ is canonically homeomorphic to $\nhat$, and $\mu_{\tilde{A}}$ agrees with the Dauns-Hofmann $\ast$-isomorphism $\theta_{\tilde{A}} : C( \Pm (\tilde{A}) ) \rightarrow Z(\tilde{A})$.
\end{corollary}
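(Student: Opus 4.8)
The plan is to derive the Corollary from Proposition~\ref{p:disc} by modifying the algebra $A$ produced there so that all of its fibres become simple, while retaining the discontinuity of the tensor product with $B$. First I would apply Proposition~\ref{p:disc} to the inexact C$^*$-algebra $B$ to obtain the separable unital continuous $C(\nhat)$-algebra $(A,\nhat,\mu_A)$ for which $(A \mint B, \nhat, \mu_A \otimes 1)$ is discontinuous at $\infty$, together with the explicit witness produced there: an element $x \in A \mint B$ and $y \in C_{\infty} \odot B$ satisfying $\norm{x + K_n} = \norm{y}_{\alpha}$ along the even integers but $\norm{x + K_{\infty}} \geq \norm{y}_{\gamma} > \norm{y}_{\alpha}$.

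Next I would isolate the obstruction to $\Pm(\tilde A) \cong \nhat$. For a continuous $C(\nhat)$-algebra the base map $\phi_A : \Pm(A) \to \nhat$ has each fibre $\phi_A^{-1}(n)$ homeomorphic to $\Pm(A_n)$, so $\phi_A$ is a homeomorphism precisely when every fibre $A_n = A/I_n$ is simple (a single-point primitive spectrum forces the fibre to be simple). Inspecting the pullback defining $A$, the fibres at the odd points are the Kirchberg--Wassermann fibres $C_n$ and the single fibre $C_{\infty}$ recurs at all even points and at $\infty$. Since every finite $n$ is isolated in $\nhat$, replacing a finite-point fibre by a simple quotient costs nothing: it affects neither continuity (there being no constraint at isolated points) nor the value of the witness at $\infty$. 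Thus the matter reduces to replacing the recurring fibre $C_{\infty}$ by a simple quotient $S = C_{\infty}/M$ for a suitable maximal ideal $M$ of $C_{\infty}$, which I would carry through the construction by composing the gluing map with $C_{\infty} \to S$ and replacing the trivial bundle $D = C(\nhat) \mint C_{\infty}$ by $C(\nhat) \mint S$, forming a new pullback $\tilde A$. Continuity of $\tilde A$ then follows exactly as in Proposition~\ref{p:disc} (alternatively via Lemma~\ref{l:subcx}); all fibres are now simple, so $\Pm(\tilde A)$ is Hausdorff and canonically homeomorphic to $\nhat$, and since the base map is a homeomorphism the structure map $\mu_{\tilde A}$ coincides with the Dauns--Hofmann isomorphism $\theta_{\tilde A}$.

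Finally I would re-run the norm estimate of Proposition~\ref{p:disc} verbatim with $S$ in place of $C_{\infty}$. Writing $\overline{y}_S \in S \odot B$ for the image of $y$ and $\gamma_S$ for the induced quotient norm on $S \odot B$, the even-point values become $\norm{\overline{y}_S}_{\alpha}$ (the minimal-tensor norm on $S \odot B$), while $\norm{x + K_{\infty}} \geq \norm{\overline{y}_S}_{\gamma_S}$; provided $\norm{\overline{y}_S}_{\gamma_S} > \norm{\overline{y}_S}_{\alpha}$, the function $n \mapsto \norm{x + K_n}$ is again discontinuous at $\infty$, which is what we need.

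The hard part will be precisely this last inequality: I must choose the maximal ideal $M$ so that the strict gap between the quotient and minimal norms survives passage to the simple quotient $S$, and here inexactness of $B$ must be invoked a second time. I would try to locate $M$ by first choosing an irreducible representation of $C_{\infty} \otimes_{\gamma} B$ on which $y$ nearly attains its $\gamma$-norm, restricting it to $C_{\infty}$ to find a primitive ideal over which the failure of $0 \to I_{\infty} \mint B \to C \mint B \to C_{\infty} \mint B \to 0$ to be exact persists, and then enlarging it to a maximal ideal while controlling the loss in norm. Establishing that such an $M$ exists --- keeping $\norm{\overline{y}_S}_{\gamma_S}$ strictly above $\norm{\overline{y}_S}_{\alpha}$ --- is the genuine difficulty, since quotients do not in general preserve either tensor norm when $B$ is inexact; by contrast, the continuity bookkeeping for the modified pullback is routine in comparison with Proposition~\ref{p:disc}.
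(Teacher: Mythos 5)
Your strategy runs in the wrong direction, and the gap you flag at the end is not the only fatal one. The claim that replacing fibres at isolated points ``affects neither continuity \ldots nor the value of the witness at $\infty$'' is false: continuity of a $C(\nhat)$-algebra at $\infty$ is precisely a constraint on the fibre norms at the \emph{finite} points, and by Lemma~\ref{l:nbhd} the witness value $\norm{x+K_{\infty}}$ is a $\limsup$ of exactly those finite-point norms. Concretely, in your modified pullback $\tilde{A} = \{ c \oplus d : q(\sigma_{\infty}(c)) = \varepsilon_{\infty}(d) \}$ with $q : C_{\infty} \rightarrow S = C_{\infty}/M$, the first coordinate still ranges over all of $C$ (pair any $c$ with the constant section $d \equiv q\sigma_{\infty}(c)$); continuity at $\infty$ would force the limit along the odd points, $\norm{\sigma_{\infty}(c)}_{C_{\infty}}$, to equal the limit along the even points, $\norm{q\sigma_{\infty}(c)}_{S}$, for every $c \in C$, i.e.\ it would force $q$ to be isometric, i.e.\ $M = \{0\}$. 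So \emph{no} choice of maximal ideal makes your $\tilde{A}$ continuous, and quotienting the odd fibres $C_n$ as well only compounds the problem, besides destroying the identity $\limsup_n \norm{(\sigma_n \otimes \mathrm{id})(\overline{y})} = \norm{y}_{\gamma}$ on which the discontinuity estimate rests. On top of this, the difficulty you do identify --- choosing $M$ so that $\norm{\overline{y}_S}_{\gamma_S} > \norm{\overline{y}_S}_{\alpha}$ --- is genuinely unresolved: enlarging a primitive ideal to a maximal one gives no norm control whatsoever, and quotients can collapse the gap between the two tensor norms precisely because $B$ is inexact; nothing in your sketch rules this out.

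The paper's proof goes in the opposite direction: instead of shrinking the fibres by quotients, it enlarges the algebra. By Blanchard's embedding theorem~\cite[Corollary 4.7]{blanch_free} there is a unital continuous $C(\nhat)$-algebra $(\tilde{A}, \nhat, \mu_{\tilde{A}})$ with \emph{simple} fibres together with a $C(\nhat)$-module $\ast$-monomorphism $\iota : A \rightarrow \tilde{A}$. Injectivity of the minimal tensor product makes $\iota \otimes \mathrm{id} : A \mint B \rightarrow \tilde{A} \mint B$ an isometric $C(\nhat)$-module embedding, so the norm gap from Proposition~\ref{p:disc} is preserved for free, and the contrapositive of Lemma~\ref{l:subcx}(iii) (a $C(\nhat)$-submodule subalgebra of a continuous $C(\nhat)$-algebra is itself continuous) shows that $(\tilde{A} \mint B, \nhat, \mu_{\tilde{A}} \otimes 1)$ is discontinuous because $(A \mint B, \nhat, \mu_A \otimes 1)$ is. Simplicity of the fibres then makes each $\tilde{I}_n$ maximal, $\tilde{I}_n \mapsto n$ a bijection, and Lee's theorem~\cite[Theorem 4]{lee} upgrades it to a homeomorphism $\Pm(\tilde{A}) \cong \nhat$, which also identifies $\mu_{\tilde{A}}$ with $\theta_{\tilde{A}}$. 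This is the move your argument is missing: embeddings interact well with $\mint$ (isometry, and discontinuity of a $C(\nhat)$-submodule passes upward), whereas quotients interact badly with both the minimal norm and continuity of the field.
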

\begin{proof}
Let $(A , \nhat , \mu_A )$ be the separable, unital continuous $C( \nhat )$-algebra of Proposition~\ref{p:disc}. Then by~\cite[Corollary 4.7]{blanch_free} there is a  unital continuous $C(\nhat )$-algebra $(\tilde{A} , \nhat , \mu_{\tilde{A}} )$ with simple fibres and a $C(\nhat)$-module $\ast$-monomorphism $\iota :A \rightarrow \tilde{A}$.  By injectivity of the minimal tensor product we get a $\ast$-monomorphism $\iota \otimes \mathrm{id} : A \mint B \rightarrow \tilde{A} \mint B$.

Now, $\tilde{A} \mint B$ is a $C(\nhat)$-algebra with base map $\mu_{\tilde{A}} \otimes 1$.  To show that $ \iota \otimes \mathrm{id}$ is a $C( \nhat )$-module map, take $a \otimes b \in A \mint B$ and $f \in C( \nhat )$.  Then we have
\begin{eqnarray*}
(\mu_{\tilde{A}} \otimes 1 )(f)(\iota \otimes \mathrm{id} ) (a \otimes b ) & = & (\mu_{\tilde{A}} (f) \iota (a) ) \otimes b \\
& = & (\iota \otimes \mathrm{id} ) ( \mu_A (f)a \otimes b ) \in (\iota \otimes \mathrm{id} )(A \mint B ).
\end{eqnarray*}
In particular it follows from Lemma~\ref{l:subcx}(iii) that $(\tilde{A} \mint B , \nhat , \mu_{\tilde{A}} \otimes 1 )$ is discontinuous, since it contains the discontinuous $C(\nhat)$-algebra $(A \mint B , \nhat , \mu_A \otimes 1 )$ as a $C(\nhat)$-submodule.

Denote by $\phi_{\tilde{A}} : \Pm (A) \rightarrow \nhat $ the base map uniquely determined by $\mu_{\tilde{A}}$, and by
\[
\tilde{I}_n = \mu_{\tilde{A}} \left(\{  f \in C( \nhat ): f(n) = 0 \} \right) \tilde{A} = \bigcap \{ P \in \Pm ( \tilde{A} ) : \phi_{\tilde{A}} (P) = n \}
\]
for each $n \in \nhat$.  Then since each fibre $\tilde{A} / \tilde{I}_n$ is simple, it follows that $\tilde{I}_n$ is maximal (and in particular primitive) for all $n \in \nhat$.  Moreover, since every $P \in \Pm ( \tilde{A} )$ contains a unique $\tilde{I}_n$ for some $n \in \nhat$, we see that $\tilde{I}_n \mapsto n$ is a bijection.  The fact that this map is a homeomorphism then follows from Lee's theorem~\cite[Theorem 4]{lee}.
\end{proof}

\begin{theorem}
\label{t:disc}
The following conditions on a C$^{\ast}$-algebra $B$ are equivalent:
\begin{enumerate}
\item[(i)] $B$ is exact,
\item[(ii)] for every separable, unital continuous $C( \hat{\mathbb{N}} )$-algebra $(A, \hat{\mathbb{N}}, \mu_A )$, the $C( \hat{\mathbb{N}} )$-algebra $(A \mint B , \hat{\mathbb{N}} , \mu_A \otimes 1 )$ is continuous,
\item[(iii)] for every separable, unital C$^{\ast}$-algebra $\tilde{A}$ with $\Pm (\tilde{A})$ Hausdorff, the $C(\Pm(\tilde{A}))$-algebra $(\tilde{A} \mint B , \Pm (\tilde{A}) , \theta_{\tilde{A}} \otimes 1)$  is continuous, where $\theta_{\tilde{A}}: C( \Pm (\tilde{A})) \rightarrow Z(\tilde{A})$ is the Dauns-Hofmann $\ast$-isomorphism,
\item[(iv)] for every locally compact Hausdorff space $X$ and continuous $C_0 (X)$-algebra $(A , X , \mu_A )$, the $C_0 (X)$-algebra $(A \mint B , X , \mu_A \otimes 1 )$ is continuous.
\end{enumerate}
\end{theorem}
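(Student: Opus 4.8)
The plan is to prove the equivalence by establishing the single substantive implication $(i) \Rightarrow (iv)$, observing that $(iv)$ specialises at once to both $(ii)$ and $(iii)$, and then closing the loop by contraposition using the counterexamples already built in Proposition~\ref{p:disc} and Corollary~\ref{c:disc}. Concretely I would verify $(i) \Rightarrow (iv) \Rightarrow (ii) \Rightarrow (i)$ together with $(iv) \Rightarrow (iii) \Rightarrow (i)$.

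For $(i) \Rightarrow (iv)$ I would take a continuous $C_0(X)$-algebra $(A,X,\mu_A)$ with associated continuous bundle $\mathscr{A}$, and regard $B$ as a $C(\{y\})$-algebra over a one-point space, so that $(A \mint B, X, \mu_A \otimes 1)$ is exactly the construction of Remark~\ref{r:tensorbundle}(ii). Since $B$ is exact, $A \mint B$ enjoys Tomiyama's property (F), whence $\Phi(I_x, \{0\}) = \Delta(I_x, \{0\})$ for every $x \in X$; this is precisely property $\fxy$ in the degenerate case $J_y = \{0\}$. As the trivial bundle over a point is continuous, Theorem~\ref{c:phi=delta}(iii) then delivers continuity of $(A \mint B, X, \mu_A \otimes 1)$, which is $(iv)$.

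The implications $(iv) \Rightarrow (ii)$ and $(iv) \Rightarrow (iii)$ I expect to be immediate specialisations: $\hat{\mathbb{N}}$ is compact (hence locally compact) Hausdorff, so $(ii)$ is $(iv)$ with $X = \hat{\mathbb{N}}$; and when $\mathrm{Prim}(\tilde{A})$ is Hausdorff it is locally compact with $\mathrm{Prim}(\tilde{A}) = \Gl(\tilde{A})$ and continuous Dauns-Hofmann representation (as recorded after Theorem~\ref{t:dh}), so $(iii)$ is $(iv)$ with $X = \mathrm{Prim}(\tilde{A})$ and $\mu_{\tilde{A}} = \theta_{\tilde{A}}$. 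For the reverse directions I would argue by contraposition: if $B$ is inexact, Corollary~\ref{c:disc} supplies a separable, unital, continuous $C(\hat{\mathbb{N}})$-algebra $(\tilde{A}, \hat{\mathbb{N}}, \mu_{\tilde{A}})$ with $\mathrm{Prim}(\tilde{A})$ homeomorphic to $\hat{\mathbb{N}}$ and $\mu_{\tilde{A}}$ the Dauns-Hofmann isomorphism, for which $(\tilde{A} \mint B, \hat{\mathbb{N}}, \mu_{\tilde{A}} \otimes 1)$ is discontinuous at $\infty$. This one algebra simultaneously refutes $(ii)$ and $(iii)$, yielding $\neg(i) \Rightarrow \neg(ii)$ and $\neg(i) \Rightarrow \neg(iii)$.

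The genuinely hard work will already have been discharged in Proposition~\ref{p:disc} and Corollary~\ref{c:disc}, where an inexact $B$ is used to build a continuous bundle over $\hat{\mathbb{N}}$ whose fibrewise tensor product exposes the gap between $\norm{\cdot}_\gamma$ and $\norm{\cdot}_\alpha$ at $\infty$, and where the passage to simple fibres with Hausdorff primitive spectrum is effected via Blanchard's embedding. Within the present proof the only delicate point will be to confirm that the single counterexample of Corollary~\ref{c:disc} meets all the hypotheses of $(ii)$ and $(iii)$ at once --- separability, unitality, continuity, Hausdorffness of $\mathrm{Prim}(\tilde{A})$, and the identification of the structure map with $\theta_{\tilde{A}}$ --- features that were arranged there precisely for this purpose. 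By contrast, $(i) \Rightarrow (iv)$ is purely formal once the equivalence of exactness of $B$ with property (F) of $A \mint B$ is invoked and the resulting identity $\Phi(I_x, \{0\}) = \Delta(I_x, \{0\})$ is fed into Theorem~\ref{c:phi=delta}(iii).
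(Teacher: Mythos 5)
Your proposal is correct and matches the paper's proof essentially step for step: the paper also proves (i)$\Rightarrow$(iv) by noting that exactness gives property (F), hence $\Phi(I_x,\{0\})=\Delta(I_x,\{0\})$, and then invokes Theorem~\ref{c:phi=delta}(iii); it treats (iv)$\Rightarrow$(ii),(iii) as evident; and it closes the loop by contraposition using Proposition~\ref{p:disc} and Corollary~\ref{c:disc}. The only (harmless) cosmetic difference is that you refute both (ii) and (iii) with the single algebra of Corollary~\ref{c:disc}, whereas the paper uses Proposition~\ref{p:disc} directly for (ii)$\Rightarrow$(i) --- both work, since the corollary's algebra is also separable, unital and continuous over $\hat{\mathbb{N}}$.
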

\begin{proof}
(i)$\Rightarrow$(iv): Suppose that $B$ is exact and let $(A , X , \mu_A )$ be a continuous $C_0 (X)$-algebra.  Then since $A \mint B$ has property (F), we have $\Delta (I_x , \{ 0 \} ) = \Phi (I_x , \{ 0 \} )$ for all $x \in X$.  It then follows from Corollary~\ref{c:phi=delta}(iii) that $(A \mint B , X , \mu_A \otimes 1 )$ is continuous.

(iv)$\Rightarrow$(iii) and (iv)$\Rightarrow$(ii) are evident.   

(ii)$\Rightarrow$(i): Suppose that $B$ is inexact, then by Proposition~\ref{p:disc} there is a separable, unital continuous $C( \hat{\mathbb{N}} )$-algebra $(A , \hat{\mathbb{N}} , \mu_A )$ such that $(A \mint B, \hat{\mathbb{N} }, \mu_A \otimes 1 )$ is discontinuous, so that (ii) fails.

(iii)$\Rightarrow$(ii): This follows similarly from Corollary~\ref{c:disc}.

\end{proof}

\begin{corollary}
\label{c:disc2}
The following conditions on a $C_0 (Y)$-algebra $(B,Y,\mu_B)$ are equivalent:
\begin{enumerate}
\item[(i)] $B$ is exact,
\item[(ii)] For every separable unital $C_0( X )$-algebra $(A,X, \mu_A)$, with $X = \nhat$ , the $C_0 ( X \times Y)$-algebra $(A \mint B , X \times Y , \mu_A \otimes \mu_B )$ satisfies property $\fxy$,
\item[(iii)] For every $C_0 (X)$-algebra $(A,X,\mu_A)$, the $C_0 ( X \times Y)$-algebra $(A \mint B , X \times Y , \mu_A \otimes \mu_B )$ satisfies property $\fxy$,
\end{enumerate}
If in addition $(B,Y,\mu_B)$ is a continuous $C_0 (Y)$-algebra, then (i) to (iii) are equivalent to:
\begin{enumerate}
\item[(iv)]  for every separable, unital continuous $C(\nhat)$-algebra $(A,\nhat, \mu_A )$, the $C( \nhat \times Y )$-algebra $(A \mint B , \nhat \times Y , \mu_A \otimes \mu_B )$ is continuous,
\item[(v)] for every continuous $C_0 (X)$-algebra $(A,X,\mu_A)$, the $C_0 (X \times Y)$-algebra $(A \mint B , X \times Y , \mu_A \otimes \mu_B )$ is continuous.
\end{enumerate}
\end{corollary}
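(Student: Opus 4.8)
The plan is to establish the cycle (i)$\Rightarrow$(iii)$\Rightarrow$(ii)$\Rightarrow$(i) for the first group of conditions, and (i)$\Rightarrow$(v)$\Rightarrow$(iv)$\Rightarrow$(i) for the second (assuming $B$ continuous), reducing both hard directions to Theorem~\ref{t:disc}. The implications (i)$\Rightarrow$(iii) and (i)$\Rightarrow$(v) are quick: if $B$ is exact then $A\mint B$ has Tomiyama's property (F) for every $A$, so $\Phi(I,J)=\Delta(I,J)$ for all ideal pairs and in particular $\Phi(I_x,J_y)=\Delta(I_x,J_y)$, which is property $\fxy$; and when $\mathscr{A}$ and $\mathscr{B}$ are continuous, Theorem~\ref{c:phi=delta}(iii) upgrades $\fxy$ to continuity of $(A\mint B,X\times Y,\mu_A\otimes\mu_B)$. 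The implications (iii)$\Rightarrow$(ii) and (v)$\Rightarrow$(iv) are mere specialisations of the class of $A$'s considered.

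The core of the argument is a pair of localisation results in the $Y$-variable, both exploiting that $A\mint B$ carries simultaneously the $C_0(X\times Y)$-structure $\mu_A\otimes\mu_B$ (with fibre ideals $\Delta(I_x,J_y)$, by Proposition~\ref{p:tensorbundle}(v)) and the $C_0(X)$-structure $\mu_A\otimes 1$ (with fibre ideals $I_x\mint B=\Delta(I_x,\{0\})$). The key observation is that the quotient $Q_x:=(A\mint B)/(I_x\mint B)$ is itself a $C_0(Y)$-algebra, via the descent of $1\otimes\mu_B$, whose fibre over $y$ is $(A\mint B)/\Delta(I_x,J_y)$. Since the norm of any $C_0(Y)$-algebra is the supremum of its fibre norms, this yields the pushforward formula $\|c+I_x\mint B\|=\sup_{y\in Y}\|c+\Delta(I_x,J_y)\|$ for every $c$.

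For (ii)$\Rightarrow$(i) I would restrict attention to the separable unital \emph{continuous} $C(\nhat)$-algebras $A$ (a sub-case of (ii)). Writing $Q_n=(A\mint B)/(I_n\mint B)\cong (A/I_n)\otimes_\gamma B$ as in Lemma~\ref{l:cstnorm}, property $\fxy$ forces each fibre $(A\mint B)/\Delta(I_n,J_y)$ to be the minimal tensor product $(A/I_n)\mint B_y$; because $\bigcap_{y}J_y=\{0\}$, injectivity of the minimal tensor product identifies $\sup_y\|\cdot\|_{(A/I_n)\mint B_y}$ with the minimal norm on $(A/I_n)\mint B$, so the pushforward formula yields $\gamma=\alpha$, that is $\Phi(I_n,\{0\})=\Delta(I_n,\{0\})$ for all $n$. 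Since $A$ is continuous, Theorem~\ref{c:phi=delta}(iii) then makes $(A\mint B,\nhat,\mu_A\otimes 1)$ continuous for every such $A$, and Theorem~\ref{t:disc}\,((ii)$\Rightarrow$(i)) concludes that $B$ is exact.

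For (iv)$\Rightarrow$(i) I would argue contrapositively through the same pushforward formula: if $(A\mint B,\nhat\times Y,\mu_A\otimes\mu_B)$ is continuous then its norm function $g(n,y)=\|c+\Delta(I_n,J_y)\|$ lies in $C_0(\nhat\times Y)$, and a compactness argument on the superlevel sets $\{g\ge\lambda\}$ (which are compact precisely because $g\in C_0$) shows that $n\mapsto\sup_y g(n,y)=\|c+I_n\mint B\|$ is continuous; hence $(A\mint B,\nhat,\mu_A\otimes 1)$ is continuous for every separable unital continuous $C(\nhat)$-algebra $A$, and Theorem~\ref{t:disc} again gives exactness of $B$. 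I expect the main obstacle to be precisely these $Y$-localisation steps: checking that $Q_x$ is a \emph{faithful} $C_0(Y)$-bundle whose fibre norms simultaneously compute $\gamma$ and the minimal norm, and controlling the supremum over $y$ of the fibrewise norm functions without assuming $Y$ is compact.
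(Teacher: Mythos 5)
Your proposal is correct, but it takes a genuinely different route from the paper on both hard implications. For the equivalence of (i)--(iii), the paper simply cites the proof of Blanchard's Proposition 3.1 in~\cite{blanch_exact}, whereas you prove (ii)$\Rightarrow$(i) directly: your pushforward formula $\norm{c + I_x \mint B} = \sup_{y \in Y} \norm{c + \Delta (I_x , J_y )}$ (obtained by viewing $(A \mint B)/(I_x \mint B)$ as a $C_0 (Y)$-algebra with fibre ideals $\Delta (I_x , J_y )/(I_x \mint B)$) converts $\fxy$ into $\Phi (I_n , \{0\}) = \Delta (I_n , \{0\})$, and Theorem~\ref{t:disc} finishes. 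For (iv)$\Rightarrow$(i), the paper argues contrapositively at the level of base maps: it shows the lower triangle $\overline{\phi_A} = p_1 \circ \phi_{\alpha}$ commutes (via the uniqueness part of Lazar's extension theorem and density of $\Phi ( \Pm (A) \times \Pm (B))$), so that openness of $\phi_{\alpha}$ would force openness of $\overline{\phi_A}$, contradicting Proposition~\ref{p:disc} through Lee's theorem; you instead work at the level of norm functions, using compactness of the superlevel sets of $N(c) \in C_0 ( \nhat \times Y )$ to show $n \mapsto \sup_y \norm{c + \Delta (I_n , J_y)} = \norm{c + I_n \mint B}$ is continuous, again reducing to Theorem~\ref{t:disc}. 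Both routes ultimately rest on Proposition~\ref{p:disc}; yours buys self-containedness (no appeal to~\cite{blanch_exact}, to openness of base maps, or to Lee's theorem), at the cost of the two $Y$-localisation verifications you flag, and your supremum argument is sound since $\nhat$ is compact. One attribution to fix: the identity $\sup_y \norm{ ( \mathrm{id} \otimes \sigma_y )(z)} = \norm{z}_{\alpha}$ on $(A/I_n) \mint B$ does not follow from ``injectivity of the minimal tensor product'' (which concerns inclusions $D \mint J \subseteq D \mint B$), but from $\bigcap_y \ker ( \mathrm{id} \otimes \sigma_y ) = \Phi ( \{0\} , \bigcap_y J_y ) = \{0\}$ --- i.e.\ the fact that $\Phi$ commutes with intersections in each variable, which is~\cite[Lemma 2.2]{lazar_tensor} (equivalently a slice-map argument), exactly the lemma the paper invokes for the analogous intersection in Theorem~\ref{t:ineq} --- followed by isometry of the resulting injective $\ast$-homomorphism into the product of the fibres; the same fact justifies faithfulness of the fibre family of your quotient $C_0 (Y)$-algebra $Q_x$, so both of your anticipated obstacles are filled by one standard citation.
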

\begin{proof}
The equivalence of (i),(ii) and (iii) is shown in the proof of~\cite[Proposition 3.1]{blanch_exact}.

To see that (iv) implies (i), we argue by contradiction.  Indeed, suppose that $B$ is inexact, then by Proposition~\ref{p:disc} there is a separable unital $C(\nhat)$-algebra $(A, \nhat , \mu_A )$ with the property that the $C(\nhat )$-algebra $(A \mint B , \nhat , \mu_A \otimes 1)$ is discontinuous.  We will show that the $C(\nhat \times Y)$-algebra $(A \mint B , \nhat \times Y , \mu_A \otimes \mu_B )$ must also be discontinuous.

Let $\phi_A : \Pm (A) \rightarrow \nhat$ and $\phi_B : \Pm (B) \rightarrow Y$ be the base maps corresponding to $\mu_A$ and $\mu_B$ respectively, which are open since  both $A$ and $B$ are continuous.  We will denote by  $\phi_{\alpha} : \Pm (A \mint B ) \rightarrow \nhat \times Y$ and $\overline{\phi_A} : \Pm (A \mint B) \rightarrow \nhat$  the base maps associated with $\mu_A \otimes \mu_B$ and $\mu_A \otimes 1$ respectively.  Note that $\overline{\phi_A}$ is not an open mapping since the $C( \nhat )$-algebra $(A \mint B , \nhat , \mu_A \otimes 1 )$ is not continuous, and that 
\[ \overline{\phi_A} \circ \Phi = p_1 \circ ( \phi_A \times \phi_B ) = p_1 \circ \phi_{\alpha} \circ \Phi \]
on $\Pm (A) \times \Pm (B)$ by Proposition~\ref{p:tensorbundle}(i).

Consider now the  diagram
\[
\xymatrix{
\Pm (A) \times \Pm (B) \ar@{^{(}->}^{\Phi}[r] \ar_{\phi_A \times \phi_B}[d] & \Pm (A \mint B ) \ar_{\phi_{\alpha}}[ld] \ar^{\overline{\phi_A}}[d] \\
\nhat \times Y \ar_{p_1}[r] & \nhat
}
\]  
where $p_1$ is the (open) projection onto the first factor.  To see that the lower triangle of this diagram commutes, note that since $ p_1 \circ \phi_{\alpha} $ agrees with $\overline{\phi_A}$ on $\Phi ( \Pm (A) \times \Pm (B))$, both maps must agree on $\Pm (A \mint B )$ by the uniqueness part of~\cite[Theorem 3.2]{lazar_tensor}.

Then if $\phi_{\alpha}$ were open, this would imply that $\overline{\phi_A} = p_1 \circ \phi_{\alpha}$  were open, which is impossible since $(A \mint B , \nhat , \mu_A \otimes 1 )$ is discontinuous.  In particular, it follows that $(A \mint B, \nhat \times Y , \mu_A \otimes \mu_B )$ is discontinuous.

The fact that (v) implies (iv) is evident.  To see that (i) implies (v), note that if (i) holds then $(A \mint B , X \times Y , \mu_A \otimes \mu_B )$ satisfies property $\fxy$ by the equivalence of (i) and (iii).  Then by Theorem~\ref{c:phi=delta}(iii), $(A \mint B , X \times Y , \mu_A \otimes \mu_B )$ is a continuous $C_0 (X \times Y )$-algebra.

\end{proof}

\section{Quasi-standard C$^{\ast}$-algebras and Hausdorff primitive ideal spaces}
\label{s:exact}
This section is concerned with stability of the class of quasi-standard C$^{\ast}$-algebras under tensor products.  This question was first studied by Archbold in~\cite{arch_cb}, where is was shown that if $A$ and $B$ are quasi-standard and $A \mint B$ satisfies property $\fgl$, then $A \mint B$ is quasi-standard.  We gave a partial converse to this result in~\cite[Theorem 5.2]{m_glimm}.  However, it is clear from Theorem~\ref{t:ineq} that property $\fgl$ is not a necessary condition for $A \mint B$ to be quasi-standard.  

We will show in Theorem~\ref{t:qs} that a quasi-standard C$^{\ast}$-algebra $A$ is exact if and only if $A \mint B$ is quasi-standard for all quasi-standard $B$.  As a related result, we show that a (not necessarily quasi-standard) C$^{\ast}$-algebra $A$ is exact if and only if $A \mint B$ satisfies property $\fgl$ for all C$^{\ast}$-algebras $B$, if and only $A \mint B$ satisfies property $\fmp$ for all C$^{\ast}$-algebras $B$.  The existence of C$^{\ast}$-algebras $A$ and $B$ such that $A \mint B$ does not satisfy properties $\fgl$ and $\fmp$ was previously unknown, thus our result answers a question posed by Archbold~\cite[p. 142]{arch_cb} and Lazar~\cite[p. 250]{lazar_tensor}.

\begin{proposition}
\label{p:phideltaglimm}
Let $A$ and $B$ be C$^{\ast}$-algebras.
\begin{enumerate}
\item[(i)]  If $A \mint B$ satisfies $\fmp$, then $A \mint B$ satisfies $\fgl$,
\item[(ii)] If  $(A , X , \mu_A )$ is a $C_0 (X)$-algebra and $(B , Y , \mu_B)$ a $C_0 (Y)$-algebra, then $A \mint B$ satisfies property $\fgl$ implies that $A \mint B$ satisfies property $\fxy$.
\end{enumerate}

\end{proposition}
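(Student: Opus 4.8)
The plan is to handle (i) and (ii) by one mechanism: in each case the ``coarser'' ideal on each side is a fibred intersection of the ``finer'' ideals on which the hypothesis is assumed, and the whole statement collapses once one knows that $\Delta$ distributes over that intersection. For (ii) the distributivity I want is
\[
\Delta(I_x,J_y)=\bigcap_{\substack{G\in\Gl(A),\ \psi_A(G)=x\\ H\in\Gl(B),\ \psi_B(H)=y}}\Delta(G,H),
\]
and for (i) the same formula with $\MP$ in the fibres and $G_p,H_q$ in place of $I_x,J_y$. Granting this, the proof is a two-line squeeze. Since $\Delta(I,J)\subseteq\Phi(I,J)$ always and $\Phi$ is order-preserving, every $G\supseteq I_x$, $H\supseteq J_y$ in the fibres gives $\Phi(I_x,J_y)\subseteq\Phi(G,H)$, so using $\fgl$ (resp.\ $\fmp$ for (i)),
\[
\Phi(I_x,J_y)\subseteq\bigcap\Phi(G,H)=\bigcap\Delta(G,H)=\Delta(I_x,J_y)\subseteq\Phi(I_x,J_y).
\]
Thus I need nothing about how $\Phi$ meets intersections; only the distributivity of $\Delta$ matters.

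To prove that distributivity I would extend the hull--kernel argument of Lemma~\ref{l:closed}(ii) to two variables. The engine is the equivalence, for every $W\in\Fac(A\mint B)$,
\[
W\supseteq\Delta(I,J)\iff W^A\supseteq I\ \text{and}\ W^B\supseteq J,
\]
which follows from $\Psi\circ\Delta=\mathrm{id}$, the inclusion $W\supseteq\Delta(W^A,W^B)$ (as in Lemma~\ref{l:closed}(ii), via~\cite[(4.2)]{m_glimm}), and order-preservation of $\Psi,\Delta$. Writing $k_f,\mathrm{hull}_f$ for kernel and hull in $\Fac(A\mint B)$, every closed ideal $K$ satisfies $K=k_f(\mathrm{hull}_f(K))$, and $k_f$ converts unions of hulls into intersections of ideals; hence it suffices to prove the set equality $\mathrm{hull}_f(\Delta(\text{coarse}))=\bigcup_{\text{fibre}}\mathrm{hull}_f(\Delta(\text{fine}))$. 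By the displayed equivalence this decouples into two one-sided \emph{fibre properties}: for $N\in\Fac(A)$, $N$ contains the coarse ideal iff $N$ contains some fine ideal lying in the corresponding fibre, and similarly for $B$.

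For (ii) the fibre property is immediate from results already in hand. Lemma~\ref{l:capglimm} gives $I_x=\bigcap\{G\in\Gl(A):\psi_A(G)=x\}$, and Proposition~\ref{p:phifac}(iii) shows that for $N\in\Fac(A)$ one has $N\supseteq I_x\iff\phi_A^f(N)=x\iff\psi_A(\rho_A^f(N))=x$; taking $G:=G_{\rho_A^f(N)}$ then exhibits a Glimm ideal in the fibre over $x$ with $N\supseteq G$, while the converse is clear. This is exactly the fibre property, so (ii) follows from the reduction above.

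For (i) the corresponding fibre property is where the real work lies, and I expect it to be the main obstacle. First I would record a map $\kappa_A\colon\MP(A)\to\Gl(A)$: using the central functions $C^b(\Pm(A))\cong ZM(A)$ one checks that two primitive ideals over a fixed primal ideal $M$ cannot be separated by the centre — a functional-calculus splitting of a separating central element would produce orthogonal ideals, neither inside $M$, contradicting primality — so $\mathrm{hull}(M)$ lies in a single Glimm class $\kappa_A(M)$ and $G_{\kappa_A(M)}\subseteq M$. A Zorn argument (every primitive ideal contains a minimal primal one) then yields $G_q=\bigcap\{M\in\MP(A):\kappa_A(M)=q\}$. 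The delicate forward direction asks: given $N\in\Fac(A)$ with $N\supseteq G_q$, produce a \emph{minimal} primal $M\subseteq N$ with $\kappa_A(M)=q$. Here one uses that $N$, being factorial, is itself primal (a weakly-closed ideal of a factor is trivial, so a product of ideals mapping to $0$ under a factor representation has a factor in $\ker$), hence $N$ contains some minimal primal $M$; and since every primitive ideal over $N$ lies in class $q$, the same holds for those over $M$, forcing $\kappa_A(M)=q$. With this the two-variable hull computation goes through verbatim and, combined with the first-paragraph squeeze and $\fmp$, gives $\fgl$. The crux is thus entirely the implication ``factorial $\Rightarrow$ primal'' together with ``a primal ideal sees a single Glimm class''; a minor technical point — that realising $A$ as a $C_0(\Gl(A))$-algebra presupposes local compactness of $\Gl(A)$ — is avoided throughout by phrasing everything via $\Fac(A)$ and $\rho_A^f$, so $\beta\Gl(A)$ never enters.
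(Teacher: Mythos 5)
Your proposal is correct, but it takes a genuinely different route from the paper's proof. The paper treats the two parts by expanding the ``coarse'' ideal on \emph{both} sides of the desired equality and leaning on structure theorems for the tensor product: for (i) it first proves $G=\bigcap\{M\in\MP(A):M\supseteq G\}$ (essentially your $\kappa_A$-fibre formula, via minimal primals below primitive ideals and~\cite[Lemma 2.2]{arch_som_qs}), then spends the $\fmp$ hypothesis on~\cite[Theorem 4.1]{arch_cb} — so that $\Delta$ is a homeomorphism of $\MP(A)\times\MP(B)$ onto $\MP(A\mint B)$ — together with $\Delta(G,H)\in\Gl(A\mint B)$ from~\cite[Theorem 4.8]{m_glimm}, to write $\Delta(G,H)$ as an intersection of ideals $\Delta(I,J)$, matching this against $\Phi(G,H)$ via Lazar's commutation of $\Phi$ with intersections~\cite[Lemma 2.2]{lazar_tensor}; for (ii) it again uses~\cite[Theorem 4.8]{m_glimm} plus $\psi_{\alpha}\circ\Delta=\psi_A\times\psi_B$ to expand $K_{x,y}$ over Glimm ideals of $A\mint B$, and Lazar's lemma once more. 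You instead prove, \emph{unconditionally}, that $\Delta$ distributes over the fibred intersections, via the $\Fac$ hull--kernel equivalence $W\supseteq\Delta(I,J)\iff W^A\supseteq I \mbox{ and } W^B\supseteq J$ — exactly the engine of Lemma~\ref{l:closed}(ii) — and then consume $\fgl$ (resp.\ $\fmp$) only once, in a monotonicity squeeze that needs neither Lazar's lemma nor either homeomorphism theorem. Your fibre property for (ii) is indeed immediate from Lemma~\ref{l:capglimm} and Proposition~\ref{p:phifac}(iii), and for (i) your three ingredients (factorial ideals are primal; the hull of a primal ideal lies in a single Glimm class; Zorn for minimal primals below a primal) are all correct and standard, cf.~\cite{arch_primal} and~\cite[Lemma 2.2]{arch_som_qs}; you do implicitly use that $\Psi$ carries $\Fac(A\mint B)$ into $\Fac(A)\times\Fac(B)$, but the paper relies on the same fact in Proposition~\ref{p:tensorbundle}(iv), so this is safe. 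What each approach buys: the paper's is shorter given its citations and displays the topological picture (the identifications of $\MP(A\mint B)$ and $\Gl(A\mint B)$); yours is more self-contained, localises the hypothesis to a single step, handles (i) and (ii) by one uniform mechanism, and yields the stronger hypothesis-free intersection formulae $\Delta(G_p,H_q)=\bigcap\Delta(M,N)$ and $\Delta(I_x,J_y)=\bigcap\Delta(G,H)$, which in the paper's argument are only obtained under $\fmp$ (resp.\ via the Glimm-space structure theorem).
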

\begin{proof}
(i):  We first show that for any C$^{\ast}$-algebra $A$ and $G_1 \in \Gl (A)$,
\[
G_1 = \bigcap \left\{ P \in \MP (A) : P \supseteq G_1 \right\}.
\]
Indeed, since each primitive ideal of $A$ is primal, we necessarily have
\[
G_1 = \bigcap \left\{ P \in \mathrm{Primal} (A) : P \supseteq G_1 \right\} \subseteq \bigcap \left\{ P \in \MP (A) : P \supseteq G_1 \right\}.
\]
Denote by $H$ the ideal on the right, then if the above inclusion were strict, there would be some $Q \in \Pm (A)$ such that $Q \supseteq G_1 $ but $Q \not\supseteq H$.  Let $R$ be a minimal primal ideal of $A$ contained in $Q$, then by~\cite[Lemma 2.2]{arch_som_qs} there is a unique Glimm ideal $G_2$ contained in $R$.  Note $G_1 \neq G_2$ since  otherwise $R$, and hence $Q$, would contain $H$.  This in turn implies that $Q \not\supseteq G_1$, a contradiction.

Since $A \mint B$ satisfies $\fmp$, we have $\Phi = \Delta$ on $\MP (A) \times \MP (B)$, and so~\cite[Theorem 4.1]{arch_cb} shows that $\Delta$ is a homeomorphism of $\MP(A) \times \MP (B)$ onto $\MP (A \mint B )$.  For $(G,H) \in \Gl (A) \times \Gl (B)$ we have $\Delta (G,H) \in \Gl (A \mint B)$~\cite[Theorem 4.8]{m_glimm}, which together with the above remarks gives
\begin{eqnarray*}
\Delta (G,H) & = & \bigcap \left\{ R \in \MP (A \mint B) : R \supseteq \Delta (G,H) \right\} \\
& = & \bigcap \left\{ \Delta (I,J) : (I,J) \in \MP (A) \times \MP (B), \Delta (I,J) \supseteq \Delta (G,H) \right\} \\
\end{eqnarray*}
On the other hand by~\cite[Lemma 2.2]{lazar_tensor} and the first part of the proof, 
\begin{eqnarray*}
\Phi (G,H) & = & \Phi \left( \bigcap \left\{ I \in \MP (A)  : I \supseteq G \right\} , \bigcap \left\{ J \in \MP (B) : J \supseteq H \right\} \right) \\
& = & \bigcap \left\{ \Phi (I,J) : (I,J) \in \MP (A) \times \MP (B), I \supseteq G, J \supseteq H \right\} \\ 
& = & \bigcap \left\{ \Delta (I,J) : (I,J) \in \MP (A) \times \MP (B), I \supseteq G, J \supseteq H \right\}. \\ 
\end{eqnarray*}
Finally, since $\Psi \circ \Delta $ is the identity on $\mathrm{Id}'(A) \times \mathrm{Id}' (B)$ and since $\Psi$ is order preserving, we see that
\[
\Delta (I,J) \supseteq \Delta (G,H) \mbox{ if and only if } I \supseteq G \mbox{ and } J \supseteq H,
\]
from which we conclude that $\Phi (G,H) = \Delta (G,H)$ for all $(G,H) \in \Gl (A) \times \Gl (B)$.  Hence $A \mint B$ satisfies property $\fgl$.

(ii): We will use the notation of Proposition~\ref{p:tensorbundle}.  Note that for all $(x,y) \in X \times Y$, we have $K_{x,y} = \Delta (I_x, J_y)$ by Proposition~\ref{p:tensorbundle}(iv), so we will show that $K_{x,y} = \Phi (I_x,J_y)$. 
Let $\psi_A , \psi_B$ and $\psi_{\alpha}$ denote the continuous maps on the Glimm spaces of $A,B$ and $A \otimes_{\alpha} B$ induced by the base maps $\phi_A,\phi_B$ and $\phi_{\alpha}$ respectively.  
We first show that $\psi_{\alpha} \circ \Delta = \psi_A \times \psi_B$.  Indeed, for all $(P,Q) \in \Pm (A) \times \Pm (B)$ we have
\[
(\phi_A \times \phi_B )(P,Q) = (\phi_{\alpha} \circ \Phi )(P,Q)
\]
by Proposition~\ref{p:tensorbundle}(i).  Hence by the definitions of $\psi_A,\psi_B$ and $\psi_{\alpha}$,
\[
(\psi_A \times \psi_B ) \circ ( \rho_A \times \rho_B ) (P,Q) = (\psi_{\alpha} \circ \rho_{\alpha} \circ \Phi) (P,Q).
\]
Following the first paragraph of the proof of~\cite[Theorem 4.8]{m_glimm}, we see that $\rho_{\alpha} \circ \Phi = \Delta \circ (\rho_A \times \rho_B)$, which shows that
\[
(\psi_A \times \psi_B ) ( \rho_A (P) , \rho_B (Q) ) = (\psi_{\alpha} \circ \Delta ) ( \rho_A (P) , \rho_B (Q) )
\]
for all $(P,Q) \in \Pm (A) \times \Pm (B)$.

By Lemma~\ref{l:capglimm},
\[
K_{x,y} = \bigcap \{ G \in \mathrm{Glimm}(A \otimes_{\alpha} B ) : \psi_{\alpha} (G) = (x,y) \}.
\]
Any such $G \in \mathrm{Glimm}(A \otimes_{\alpha} B )$ is the image $\Delta (G_p , G_q)$ of a pair of Glimm ideals of $A $ and $B$ by~\cite[Theorem 4.8]{m_glimm}.  Together with the fact that $\psi_{\alpha} \circ \Delta = \psi_A \times \psi_B$, this gives
\[
K_{x,y} = \bigcap \{ \Delta (G_p , G_q ) : (p,q) \in \mathrm{Glimm}(A) \times \mathrm{Glimm}(B), \psi_A (p) = x \mbox{ and } \psi_B (q) = y \}.
\]
Using $\Phi = \Delta$ on $\mathrm{Glimm}(A) \times \mathrm{Glimm}(B)$,~\cite[Lemma 2.2]{lazar_tensor} shows that
\begin{eqnarray*}
K_{x,y} & = & \bigcap \{ \Phi (G_p , G_q ) : \psi_A (p) = x , \psi_B (q) = y \} \\
& = & \Phi \left( \cap \{ G_p : \psi_A (p)=x \}  , \cap \{ G_q : \psi_B (q) = y \}  \right) \\
& = & \Phi (I_x, J_y),
\end{eqnarray*}
where the final equality follows from Lemma~\ref{l:capglimm}.  Hence $A \mint B$ satisfies property $\fxy$.
\end{proof}

\begin{proposition}
\label{p:counter}
Let $B$ be an inexact C$^{\ast}$-algebra.  Then there is a separable C$^{\ast}$-algebra $A$ with $\Pm (A)$ Hausdorff such that
\begin{enumerate}
\item[(i)] there is a pair $(G,H) \in \Gl (A) \times \Gl (B)$ with $\Delta (G,H) \subsetneq \Phi (G,H)$,
\item[(ii)] there is a pair $(I,J) \in \MP (A) \times \MP (B)$ with $\Delta (I,J) \subsetneq \Phi (I,J)$,
\item[(iii)] $\Pm (A \mint B )$ is non-Hausdorff,
\item[(iv)] The complete regularisation map $\rho_{\alpha} : \Pm (A \mint B ) \rightarrow \Gl (A \mint B)$ is not open.
\end{enumerate}
\end{proposition}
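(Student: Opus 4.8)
The plan is to take for $A$ the algebra $\tilde{A}$ produced by Corollary~\ref{c:disc}. Thus $A$ is a separable, unital, continuous $C(\nhat)$-algebra $(A,\nhat,\mu_A)$ whose primitive ideal space is canonically homeomorphic to $\nhat$ (so $\Pm(A)$ is compact Hausdorff), and for which the $C(\nhat)$-algebra $(A \mint B , \nhat , \mu_A \otimes 1 )$ is discontinuous at $\infty$. Regarding $B$ as a trivially continuous $C(\{y\})$-algebra over a one-point space, this is exactly the tensor product construction of Definition~\ref{d:tensorbundle} with $Y = \{y\}$, so that $J_y = \{0\}$ and property $\fxy$ reduces to the assertion $\Phi(I_n,\{0\}) = \Delta(I_n,\{0\})$ for all $n \in \nhat$.

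For (i) and (ii) I run the implications of this section in reverse. The bundle $\mathscr{A}$ is continuous and $\mathscr{B}$ is (trivially) continuous over a point, while $(A \mint B , \nhat , \mu_A \otimes 1)$ is not continuous; so by the contrapositive of Theorem~\ref{c:phi=delta}(iii) property $\fxy$ must fail. By the contrapositive of Proposition~\ref{p:phideltaglimm}(ii), failure of $\fxy$ forces failure of $\fgl$, so there is a pair $(G,H) \in \Gl(A) \times \Gl(B)$ with $\Phi(G,H) \neq \Delta(G,H)$; since $\Delta(G,H) \subseteq \Phi(G,H)$ always holds, the inclusion is strict, giving (i). Likewise, the contrapositive of Proposition~\ref{p:phideltaglimm}(i) turns failure of $\fgl$ into failure of $\fmp$, producing a pair $(I,J) \in \MP(A) \times \MP(B)$ with $\Delta(I,J) \subsetneq \Phi(I,J)$, which is (ii). Note these are genuine Glimm and minimal primal ideals of $B$, as $\fgl$ and $\fmp$ are intrinsic to $A$ and $B$.

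The substance is in (iv). I factor the base map $\overline{\phi_A} : \Pm(A \mint B) \to \nhat$ of the $C(\nhat)$-algebra $(A \mint B , \nhat , \mu_A \otimes 1)$ through the complete regularisation as $\overline{\phi_A} = \psi \circ \rho_\alpha$, where $\psi : \Gl(A \mint B) \to \nhat$ is the induced continuous map (this factorisation exists since $\nhat$ is completely regular). Discontinuity at $\infty$ means precisely that $\overline{\phi_A}$ is not open, by Proposition~\ref{p:c0xbundles}(iii). If I can show $\psi$ is open, then $\rho_\alpha$ cannot be open, for otherwise $\psi \circ \rho_\alpha = \overline{\phi_A}$ would be open. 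To see that $\psi$ is open I use~\cite[Theorem 4.8]{m_glimm}: $\Delta$ is a bijection of $\Gl(A) \times \Gl(B)$ onto $\Gl(A \mint B)$, and since $\Gl(A) = \nhat$ is compact Hausdorff while $\Gl(B)$ is completely regular Hausdorff, the product $\nhat \times \Gl(B)$ is already completely regular, so $\Delta$ is in fact a homeomorphism. Specialising the identity $\psi_\alpha \circ \Delta = \psi_A \times \psi_B$ from the proof of Proposition~\ref{p:phideltaglimm}(ii), with $\psi_A = \mathrm{id}_{\nhat}$ and $\psi_B$ constant, shows that $\psi$ is carried by $\Delta$ to the first-coordinate projection $\nhat \times \Gl(B) \to \nhat$, which is open. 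The main obstacle is precisely verifying that $\Delta$ induces a homeomorphism and intertwines $\psi$ with this projection (via $\rho_\alpha \circ \Phi = \Delta \circ (\rho_A \times \rho_B)$ and the uniqueness of the universal factorisation), which is where the machinery of~\cite{m_glimm} is essential.

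Finally, (iii) is a formal consequence of (iv), mirroring the remark preceding Theorem~\ref{t:dh}. If $\Pm(A \mint B)$ were Hausdorff it would be locally compact, hence completely regular, so the continuous functions would separate its points; then the relation $\approx$ would be trivial, $\Gl(A \mint B)$ would coincide with $\Pm(A \mint B)$ as sets and topologically, and $\rho_\alpha$ would be the identity homeomorphism, in particular open. Since (iv) shows $\rho_\alpha$ is not open, $\Pm(A \mint B)$ cannot be Hausdorff.
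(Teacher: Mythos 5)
Your overall architecture is sound, and for parts (i), (ii) and (iv) it essentially reproduces the paper's own proof: the paper likewise takes $A$ from Corollary~\ref{c:disc}, regards $B$ as a $C_0(Y)$-algebra over a point, runs Theorem~\ref{c:phi=delta}(iii) and Proposition~\ref{p:phideltaglimm} in contrapositive to obtain (i) and (ii), and proves (iv) by factoring the non-open base map $\overline{\phi_A}$ through $\rho_\alpha$ and the identification $\Gl(A)\times\Gl(B)\cong\Gl(A\mint B)$, concluding via openness of the first-coordinate projection (the paper phrases this with the inverse map $\Psi$ and a commuting diagram verified on the dense image $\Phi(\Pm(A)\times\Pm(B))$, which is your intertwining argument). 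Your treatment of (iii) is genuinely different: the paper proves (iii) directly, splitting on whether $\Pm(B)$ is Hausdorff --- if not, the homeomorphic embedding $\Phi$ of $\Pm(A)\times\Pm(B)$ into $\Pm(A\mint B)$ transports non-Hausdorffness; if so, then $\Gl=\Pm$ for both factors, (i) yields primitive $P,Q$ with $\Delta(P,Q)\subsetneq\Phi(P,Q)$, hence some $R\in\Pm(A\mint B)$ with $R\supseteq\Delta(P,Q)$ and $R\neq\Phi(P,Q)$; since $\Delta(P,Q)$ is a Glimm ideal of $A\mint B$, $R\approx\Phi(P,Q)$, exhibiting two non-separated points. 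Your derivation of (iii) from (iv) (Hausdorff implies locally compact, hence completely regular, hence $\rho_\alpha$ is a homeomorphism and in particular open) is shorter and uniform, and is logically legitimate since your (iv) does not use (iii); the paper's direct argument buys explicit non-separated points and keeps the four parts independent.

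One step in your (iv) is, however, wrongly justified as written. You assert that $\Delta:\Gl(A)\times\Gl(B)\to\Gl(A\mint B)$ is a homeomorphism ``since $\nhat\times\Gl(B)$ is already completely regular.'' This is a non sequitur: the issue is not the separation properties of the product, but whether the product topology on $\Gl(A)\times\Gl(B)$ coincides with the complete-regularisation topology of $\Pm(A)\times\Pm(B)$, i.e.\ whether every $f\in C^b(\Pm(A)\times\Pm(B))$ factors through $\rho_A\times\rho_B$. For general products this fails (the complete regularisation of a product can be strictly finer than the product of the complete regularisations; compare $\beta(X\times Y)\neq\beta X\times\beta Y$ in general). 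What rescues the argument is quasi-compactness of $\Pm(A)\cong\nhat$: the paper invokes the Glicksberg-type result~\cite[Proposition 1.9]{m_glimm} to identify the complete regularisation of $\Pm(A)\times\Pm(B)$ with $\Gl(A)\times\Gl(B)$ carrying the product topology, and only then does~\cite[Theorem 4.8]{m_glimm} upgrade the bijection to a homeomorphism. Since you defer to the machinery of~\cite{m_glimm} anyway, this is patchable by citing the correct statement, but the complete-regularity reasoning you offer would not survive scrutiny. Relatedly, you omit the paper's opening reduction in (iv): it first disposes of the case where $\rho_B$ is not open via~\cite[Corollary 5.6]{m_glimm} (as $A$ is unital, $\rho_\alpha$ cannot then be open) before assuming $\rho_B$ open; you should either include this reduction or verify that the homeomorphism you import from~\cite[Theorem 4.8]{m_glimm} requires no openness hypothesis on $\rho_B$.
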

\begin{proof}
To prove (i), let $(A , \nhat , \mu_A )$ be the continuous $C( \nhat )$-algebra  constructed in Corollary~\ref{c:disc}, so that the $C( \nhat )$-algebra $(A \mint B , \nhat , \mu_A \otimes 1 )$ is discontinuous. We regard $B$ as a (continuous) $C_0 (Y)$-algebra over a one-point space $Y = \{ y \}$ as in Remark~\ref{r:tensorbundle}(ii).   Then since $(A \mint B , \nhat , \mu_A \otimes 1 )$ is discontinuous, it follows from Theorem~\ref{c:phi=delta}(iii) that $A \mint B$ does not satisfy property $\fxy$.  Hence by Proposition~\ref{p:phideltaglimm}(ii), it must follow that $\Phi \neq \Delta $ on $\Gl (A) \times \Gl (B)$. (ii) is immediate from (i) and Proposition~\ref{p:phideltaglimm}(i).

(iii):  Note that if $\Pm (B)$ is non-Hausdorff, then the same is true of $\Pm (A) \times \Pm (B)$. Since $\Phi$ maps $\Pm (A) \times \Pm (B)$ homeomorphically onto its  image in $\Pm (A \mint B)$~\cite[lemme 16]{wulfsohn}, it then follows that the latter must also be non-Hausdorff.

Suppose now that $\Pm (B)$ is Hausdorff, then $\Pm (B) = \Gl (B)$.  Since $\Pm (A) = \Gl (A)$ also, (i) implies that there are $(P,Q) \in \Pm (A) \times \Pm (B)$ such that $\Delta (P,Q) \subsetneq \Phi (P,Q)$. It follows that there is $R \in \Pm (A \mint B )$ such that $R \supseteq \Delta (P,Q)$ but $R \neq \Phi (P,Q)$.  Since by~\cite[Theorem 4.8]{m_glimm}, $\Delta (P,Q)$ is a Glimm ideal of $A \mint B$, we have $R \approx \Phi (P,Q)$, so that $\Pm (A \mint B )$ is non-Hausdorff.

(iv):  If $\rho_B : \Pm (B) \rightarrow \Gl (B)$ is not an open map, then since $A$ is unital, $\rho_{\alpha}$ is not open by~\cite[Corollary 5.6]{m_glimm}.  Thus we will assume that $\rho_B$ is open.

Since $A \mint B$ is a discontinuous $C(\hat{\mathbb{N}} )$-algebra, the continuous mapping $\phi_{\alpha} : \Pm (A \mint B) \rightarrow \hat{\mathbb{N}}$ is not open.  Moreover, $\phi_{\alpha}$ is the unique extension of $\phi_A \circ p_1 : \Pm (A) \times \Pm (B) \rightarrow \hat{\mathbb{N}}$ to $\Pm (A \mint B)$, where $p_1$ is the projection onto the first factor.  We will denote by $\psi_A : \Gl (A) \rightarrow \nhat$ the canonical homeomorphism, and by $\psi_{\alpha} : \Gl (A \mint B ) \rightarrow \nhat $ the map induced by $\phi_{\alpha}$ given by the universal property of the complete regularisation, so that $\phi_{\alpha} = \psi_{\alpha} \circ \rho_{\alpha}$.

 Since $\Pm(A)$ is compact,  the complete regularisation of $\Pm (A) \times \Pm (B)$ is canonically identified with $\Gl (A) \times \Gl (B)$ with the product topology~\cite[Proposition 1.9]{m_glimm}. In particular, this implies that $\Psi : \Gl (A \mint B ) \rightarrow \Gl (A) \times \Gl (B)$ is a homeomorphism by~\cite[Theorem 4.8]{m_glimm}. Let $\tilde{p_1} : \Gl (A) \times \Gl (B) \rightarrow \Gl (A)$ be the projection onto the first factor and consider now the diagram
\[
\xymatrix{
\Pm (A \mint B ) \ar^{\rho_{\alpha}}[r] \ar^{\phi_{\alpha}}[d] & \Gl (A \mint B) \ar^{\Psi}[r] & \Gl(A) \times \Gl (B) \ar^{\tilde{p_1}}[d] \\
\hat{\mathbb{N}} & & \Gl(A). \ar^{\psi_A}[ll] \\
}
\]
Then by~\cite[Theorem 2.2 and Theorem 4.8(ii)]{m_glimm} we have $\Psi \circ \rho_{\alpha} \circ \Phi = \rho_A \times \rho_B$ on $\Pm (A) \times \Pm (B)$, so that for all $(P,Q) \in \Pm (A) \times \Pm (B)$, 
\begin{eqnarray*}
(\psi_A \circ \tilde{p}_1 \circ \Psi \circ \rho_{\alpha} ) ( \Phi (P,Q) ) & = & (\psi_A \circ \tilde{p}_1 ) ( \rho_A (P), \rho_B (Q) ) \\
& = & (\psi_A \circ \rho_A ) (P) \\
& = & \phi_A (P) \\
& = & ( \phi_{\alpha} ) ( \Phi (P,Q) ),
\end{eqnarray*}
the final equality holding by Proposition~\ref{p:tensorbundle}(i).  Since $\Phi ( \Pm (A) \times \Pm (B))$ is dense in $\Pm (A \mint B )$, it follows by continuity that the above diagram commutes.

Note that $\Psi, \tilde{p}_1$ and $\psi_A$ are all open mappings. Thus if $\rho_{\alpha} $ were open, it would follow that $\phi_{\alpha}$ were, which would imply that $(A \mint B, \nhat , \mu_A \otimes 1)$ were a continuous $C(\hat{\mathbb{N}})$-algebra, a contradiction.  
\end{proof}

\begin{theorem}
\label{t:exact}
The following conditions on a C$^{\ast}$-algebra $B$ are equivalent:
\begin{enumerate}
\item[(i)] $B$ is exact,
\item[(ii)] $A \mint B$ satisfies property $\fgl$ for all C$^{\ast}$-algebras $A$,
\item[(iii)] $A \mint B$ satisfies property $\fmp$ for all C$^{\ast}$-algebras $A$.
\end{enumerate}
\end{theorem}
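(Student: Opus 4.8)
The plan is to prove the cyclic chain of implications $\mathrm{(i)} \Rightarrow \mathrm{(iii)} \Rightarrow \mathrm{(ii)} \Rightarrow \mathrm{(i)}$, since almost all of the analytic content has already been isolated in the preceding results.

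For $\mathrm{(i)} \Rightarrow \mathrm{(iii)}$, I would invoke the equivalence recorded after the definition of property (F): a C$^{\ast}$-algebra is exact precisely when its minimal tensor product with every C$^{\ast}$-algebra enjoys Tomiyama's property (F). Using the canonical isomorphism $A \mint B \cong B \mint A$ together with the symmetry of (F) under interchanging the factors, exactness of $B$ yields that $A \mint B$ has property (F) for every $A$. Property (F) asserts $\Phi(I,J) = \Delta(I,J)$ for all pairs of ideals, and specialising to $(I,J) \in \MP(A) \times \MP(B)$ gives $\fmp$ for every $A$, which is $\mathrm{(iii)}$.

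The implication $\mathrm{(iii)} \Rightarrow \mathrm{(ii)}$ is then immediate from Proposition~\ref{p:phideltaglimm}(i): for each fixed $A$, property $\fmp$ of $A \mint B$ forces property $\fgl$ of $A \mint B$, so the universally quantified $\mathrm{(iii)}$ delivers the universally quantified $\mathrm{(ii)}$. For the remaining implication $\mathrm{(ii)} \Rightarrow \mathrm{(i)}$ I would argue contrapositively: if $B$ is inexact, then Proposition~\ref{p:counter}(i) supplies a single separable C$^{\ast}$-algebra $A$ with Hausdorff primitive ideal space and a pair $(G,H) \in \Gl(A) \times \Gl(B)$ with $\Delta(G,H) \subsetneq \Phi(G,H)$. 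This one witness shows that $A \mint B$ fails $\fgl$, so $\mathrm{(ii)}$ fails, closing the cycle.

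I expect no serious difficulty in the assembly itself; the genuinely hard step---exhibiting, for an arbitrary inexact $B$, a concrete $A$ for which $\fgl$ (equivalently $\fmp$) breaks down---is precisely Proposition~\ref{p:counter}, which rests in turn on the discontinuity construction of Proposition~\ref{p:disc} and its simple-fibre refinement in Corollary~\ref{c:disc}. The only point demanding care here is the matching of quantifiers: the forward implications must hold for \emph{every} $A$, whereas refuting $\mathrm{(ii)}$ or $\mathrm{(iii)}$ requires only \emph{one} unfavourable $A$, and these directions align exactly because exactness of $B$ is itself a condition quantified over all $A$.
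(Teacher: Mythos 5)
Your proposal is correct and takes essentially the same approach as the paper: both arguments rest on the equivalence of exactness of $B$ with Tomiyama's property (F) for $A \mint B$ over all $A$ (giving the forward implications by specialisation), and on Proposition~\ref{p:counter} for the converse. The only difference is organisational—you close a cycle $\mathrm{(i)}\Rightarrow\mathrm{(iii)}\Rightarrow\mathrm{(ii)}\Rightarrow\mathrm{(i)}$ using Proposition~\ref{p:phideltaglimm}(i) for $\mathrm{(iii)}\Rightarrow\mathrm{(ii)}$, whereas the paper refutes (ii) and (iii) separately via parts (i) and (ii) of Proposition~\ref{p:counter}; since part (ii) of that proposition is itself deduced from part (i) via Proposition~\ref{p:phideltaglimm}(i), the mathematical content is identical.
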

\begin{proof}
(i)$\Rightarrow$(ii) and (iii): if $B$ is exact then $A \mint B$ satisfies property (F) for all $A$, hence $A \mint B$ satisfies properties $\fgl$ and $\fmp$ for all C$^{\ast}$-algebras $A$.

To see that (ii) (resp. (iii)) implies (i), note that if $B$ is inexact then there is by Proposition~\ref{p:counter}(i) (resp. (ii)) a C$^{\ast}$-algebra $A$ for which $A \mint B$ does not satisfy property $\fgl$ (resp. $\fmp$).
\end{proof}

It was shown in~\cite{arch_cb} that if $A \mint B$ satisfies property $\fmp$, then $\Delta$ (equivalently, $\Phi$) maps $\MP (A) \times \MP (B)$ homeomorphically onto $\MP (A \mint B )$.  The following corollary shows that if $B$ is inexact and quasi-standard then this map may fail to be a homeomorphism.

\begin{corollary}
\label{c:mp}
Let $B$ be an inexact, quasi-standard C$^{\ast}$-algebra.  Then there is a quasi-standard C$^{\ast}$-algebra $A$ for which the restriction of $\Delta$ to $\MP (A) \times \MP (B)$ is not a homeomorphism of this space onto $\MP (A \mint B )$.
\end{corollary}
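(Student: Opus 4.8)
The plan is to reuse the C$^{\ast}$-algebra $A$ constructed in Proposition~\ref{p:counter}. Since this $A$ has $\Pm(A)$ Hausdorff it is quasi-standard, and as $B$ is quasi-standard by hypothesis, the characterisation~\cite[Theorem 3.3]{arch_som_qs} gives $(\MP(A),\tau)=(\Gl(A),\tau_{cr})$ and $(\MP(B),\tau)=(\Gl(B),\tau_{cr})$ as topological spaces of ideals. Consequently $\MP(A)\times\MP(B)$ and $\Gl(A)\times\Gl(B)$ are the very same set carrying the same product topology, and the restriction of $\Delta$ to it is one and the same map, whether we view its domain as $\MP(A)\times\MP(B)$ or as $\Gl(A)\times\Gl(B)$.

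Next I would record the Glimm-level picture already present in the proof of Proposition~\ref{p:counter}(iv): as $\Pm(A)$ is compact,~\cite[Proposition 1.9]{m_glimm} identifies the complete regularisation of $\Pm(A)\times\Pm(B)$ with $\Gl(A)\times\Gl(B)$ carrying the product topology, and~\cite[Theorem 4.8]{m_glimm} shows that $\Psi:\Gl(A\mint B)\to\Gl(A)\times\Gl(B)$ is a homeomorphism whose inverse is the restriction of $\Delta$. Thus $\Delta$ already maps $\Gl(A)\times\Gl(B)$ homeomorphically onto $\Gl(A\mint B)$.

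The argument then proceeds by contradiction. Suppose the restriction of $\Delta$ to $\MP(A)\times\MP(B)$ were a homeomorphism onto $\MP(A\mint B)$. Since this domain coincides (as a topological space) with $\Gl(A)\times\Gl(B)$ and $\Delta$ is the same function, surjectivity onto the two targets forces $\MP(A\mint B)=\Gl(A\mint B)$ as sets of ideals; and the identity between $(\Gl(A\mint B),\tau_{cr})$ and $(\MP(A\mint B),\tau)$ then factors as the assumed $\MP$-homeomorphism precomposed with the inverse of the $\Gl$-homeomorphism, hence is itself a homeomorphism, so that $\tau_{cr}=\tau$. By~\cite[Theorem 3.3]{arch_som_qs} this would make $A\mint B$ quasi-standard.

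The contradiction comes from Proposition~\ref{p:counter}(iv): the complete regularisation map $\rho_{\alpha}:\Pm(A\mint B)\to\Gl(A\mint B)$ is not open, so by Lee's theorem~\cite[Theorem 4]{lee} the Dauns-Hofmann representation of $A\mint B$ is not a continuous C$^{\ast}$-bundle over $\Gl(A\mint B)$, whence $A\mint B$ fails clause (i) in the definition of quasi-standardness. The step I would treat most carefully is the bookkeeping in the third paragraph: one must be sure that ``onto $\MP(A\mint B)$'' and ``onto $\Gl(A\mint B)$'' refer to the same underlying function on a common domain set, so that composing the two homeomorphisms to recover the identity is legitimate. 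This is precisely what the identifications $\MP(A)=\Gl(A)$, $\MP(B)=\Gl(B)$ and the fact that $\Delta$ is defined once and for all on $\mathrm{Id}'(A)\times\mathrm{Id}'(B)$ are there to secure.
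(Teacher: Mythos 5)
Your proposal is correct and follows essentially the same route as the paper: take the algebra $A$ of Corollary~\ref{c:disc}/Proposition~\ref{p:counter}, use quasi-standardness of $A$ and $B$ to identify $\MP$ with $\Gl$ topologically via~\cite[Theorem 3.3]{arch_som_qs}, use~\cite[Theorem 4.8]{m_glimm} to see $\Delta$ is already a homeomorphism of $\Gl(A)\times\Gl(B)$ onto $\Gl(A\mint B)$, and conclude that an $\MP$-level homeomorphism would force $(\Gl(A\mint B),\tau_{cr})=(\MP(A\mint B),\tau)$ and hence quasi-standardness of $A\mint B$, contradicting Proposition~\ref{p:counter}(iv). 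The only difference is presentational: you spell out, via Lee's theorem, why non-openness of $\rho_{\alpha}$ defeats quasi-standardness, a step the paper compresses into a single citation of Proposition~\ref{p:counter}(iv).
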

\begin{proof}
Again, let $A$ be the C$^{\ast}$-algebra constructed in Corollary~\ref{c:disc}, so that $A \mint B$ is not quasi-standard by Proposition~\ref{p:counter}(iv).  Since both $A$ and $B$ are quasi-standard, we have $\Gl (A) = \MP (A)$ and $\Gl (B) = \MP (B)$, both as sets and topologically~\cite[Theorem 3.3 (iii)]{arch_som_qs}. By~\cite[Corollary 2.3 and Theorem 4.8]{m_glimm}, $\Delta$ is a homeomorphism of $\Gl(A) \times \Gl (B)$ onto $\Gl (A \mint B )$.  Thus if $\Delta$  were also a homeomorphism of $\MP(A) \times \MP (B)$ onto $\MP (A \mint B )$, it would follow that $\Gl (A \mint B ) = \MP (A \mint B )$, both as sets and topologically. This would imply that $A \mint B$ is quasi-standard by~\cite[Theorem 3.3 (iii)$\Rightarrow$(i)]{arch_som_qs}, which is a contradiction.

\end{proof}

\begin{example}
Let $B$ be a primitive, inexact C$^{\ast}$-algebra, for example $B = B(H)$~\cite{wassermann} or $B = C^{\ast} ( \mathbf{F}_2 )$~\cite{wassermann_groups} (the full group C$^{\ast}$-algebra of the free group on two generators), so that $\MP (B) = \{ 0 \}$.  Then Corollary~\ref{c:mp}  gives a C$^{\ast}$-algebra $A$ with $\Pm (A) = \MP (A) = \{ I_n : n \in \nhat \}$ for which $\Delta$ is not a homeomorphism of $\MP (A) \times \MP (B)$ onto $\MP (A \mint B )$.

We remark that by Theorem~\ref{t:kirch_wass1}(iii), $\Delta (I_n , \{ 0 \} ) = \Phi (I_n , \{ 0 \} )$ for all $n \in \mathbb{N}$, so that $\Delta (I_n , \{ 0 \} ) \in \Pm (A \mint B)$ for all such $n$.  By~\cite[Proposition 4.5]{arch_primal}, $\Delta (I_n , \{ 0 \} ) \in \MP (A \mint B)$ for all $n \in \mathbb{N}$.  On the other hand, it is not clear whether or not the Glimm ideal $\Delta (I_{\infty} , \{ 0 \} )$ is a primal ideal of $A \mint B$.
\end{example}

\begin{theorem}
\label{t:qs}
Let $B$ be a C$^{\ast}$-algebra.  Then 
\begin{enumerate}
\item[(i)] If $\rho_B: \Pm (B) \rightarrow \Gl (B)$ is an open map, then $B$ is exact if and only if $\rho_{\alpha} : \Pm (A \mint B ) \rightarrow \Gl (A \mint B )$ is open for every C$^{\ast}$-algebra $A$ with $\rho_A : \Pm (A) \rightarrow \Gl (A)$ open.
\item[(ii)]  If $B$ is quasi-standard, then $B$ is exact if and only if $A \mint B$ is quasi-standard for every quasi-standard C$^{\ast}$-algebra $A$.
\item[(iii)]  If $\Pm (B)$ is Hausdorff, then $B$ is exact if and only if $\Pm (A \mint B )$ is Hausdorff for every  C$^{\ast}$-algebra $A$ with $\Pm (A)$ Hausdorff.
\end{enumerate}
\end{theorem}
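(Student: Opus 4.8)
The plan is to treat the three parts uniformly. In each, the reverse implication (the stated tensor property forces exactness of $B$) is proved by contraposition using the single algebra $\tilde A$ produced by Corollary~\ref{c:disc} and analysed in Proposition~\ref{p:counter}, while the forward implication (exactness of $B$ yields the property for all admissible $A$) rests on the fact that an exact $B$ makes $A \mint B$ satisfy Tomiyama's property (F), and hence properties $\fgl$, $\fmp$ and $\fxy$, for every $A$. Recall that $\tilde A$ is a separable unital continuous $C(\nhat)$-algebra with $\Pm(\tilde A)$ canonically homeomorphic to $\nhat$; since $\nhat$ is compact Hausdorff, $\tilde A$ is quasi-standard, $\rho_{\tilde A}$ is a homeomorphism (so open), and $\Pm(\tilde A)$ is Hausdorff, whence $\tilde A$ is admissible in all three parts.

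For the reverse implications, suppose $B$ is inexact and take $A = \tilde A$. Proposition~\ref{p:counter}(iv) shows that $\rho_\alpha$ is not open, contradicting (i); the same fact makes the Dauns--Hofmann bundle of $\tilde A \mint B$ discontinuous, so $\tilde A \mint B$ is not quasi-standard (as in Corollary~\ref{c:mp}), contradicting (ii); and Proposition~\ref{p:counter}(iii) shows $\Pm(\tilde A \mint B)$ is non-Hausdorff, contradicting (iii). Thus each property forces $B$ to be exact.

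For the forward implications, part (ii) is immediate from the theorem of Kaniuth~\cite{kaniuth} quoted in the introduction: when $A \mint B$ has property (F) it is quasi-standard precisely when $A$ and $B$ are, and exactness of $B$ supplies (F). For part (i), openness of $\rho_A$ and $\rho_B$ means (by Lee's theorem) that $A$ and $B$ are continuous $C_0(\Gl(A))$- and $C_0(\Gl(B))$-algebras with $\Gl(A),\Gl(B)$ locally compact; Corollary~\ref{c:disc2}(v) then gives that the $C_0(\Gl(A)\times\Gl(B))$-algebra $A \mint B$ is continuous, i.e.\ its base map $\phi_\alpha:\Pm(A\mint B)\to\Gl(A)\times\Gl(B)$ is open. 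Factoring $\phi_\alpha=\psi\circ\rho_\alpha$ through the map $\psi$ induced by the universal property of the complete regularisation, one checks that $\psi$ is the inverse of the continuous bijection $\Delta:\Gl(A)\times\Gl(B)\to\Gl(A\mint B)$ of~\cite[Theorem 4.8]{m_glimm}; hence $\psi$ is a homeomorphism, and openness of $\phi_\alpha$ yields openness of $\rho_\alpha$.

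For part (iii), Hausdorffness of $\Pm(A)$ and $\Pm(B)$ forces the fibres $A/P$ and $B/Q$ of the associated continuous bundles to be simple (as noted after Theorem~\ref{t:dh}, following Fell). Taking $X=\Pm(A)=\Gl(A)$ and $Y=\Pm(B)=\Gl(B)$, exactness of $B$ gives property (F), hence $\fxy$, so Theorem~\ref{c:phi=delta}(ii),(iii) show that $A\mint B$ is a continuous $C_0(\Pm(A)\times\Pm(B))$-algebra whose fibre over $(P,Q)$ is $(A/P)\mint(B/Q)$. As the minimal tensor product of simple C$^\ast$-algebras is simple, these fibres are simple, so $A\mint B$ is a continuous field of simple C$^\ast$-algebras over the locally compact Hausdorff base $\Pm(A)\times\Pm(B)$; each fibre ideal is then maximal, every primitive ideal coincides with a fibre ideal, and $\Pm(A\mint B)$ is homeomorphic to $\Pm(A)\times\Pm(B)$, which is Hausdorff. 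I expect this last forward direction to be the main obstacle: one must verify carefully that a continuous $C_0(Z)$-algebra with simple fibres over a locally compact Hausdorff $Z$ has primitive ideal space homeomorphic to $Z$ --- so that Hausdorffness is recovered for $\Pm(A\mint B)$ itself and not merely for $\Gl(A\mint B)$ --- together with the identification of the fibres as simple. The homeomorphism $\Gl(A\mint B)\cong\Gl(A)\times\Gl(B)$ needed in (i) is a secondary technical point, settled by the continuity of $\Delta$ above.
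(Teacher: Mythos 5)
Your proposal shares its skeleton with the paper's proof: all three reverse implications are obtained there exactly as you do, from Proposition~\ref{p:counter} (parts (iv), (iv) and (iii) respectively, all built on the algebra of Corollary~\ref{c:disc}, which is unital with $\Pm$ homeomorphic to $\nhat$, hence quasi-standard with $\rho$ the identity), and (ii) forward is likewise settled by citing Kaniuth. Where you genuinely diverge is in the forward directions of (i) and (iii). For (i) the paper simply cites \cite[Theorem 5.2]{m_glimm} (openness of $\rho_A$, $\rho_B$ together with property $\fgl$ gives openness of $\rho_{\alpha}$), whereas you rederive this via Corollary~\ref{c:disc2}(v). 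Your derivation has one soft spot: you assert that $\Delta : \Gl(A) \times \Gl(B) \rightarrow \Gl(A \mint B)$ is continuous ``by \cite[Theorem 4.8]{m_glimm}''. Within this paper that theorem is used only for the set-level bijection; whenever the homeomorphism is needed (proofs of Proposition~\ref{p:counter}(iv) and Corollary~\ref{c:mp}) it is invoked together with an auxiliary compactness hypothesis via \cite[Proposition 1.9]{m_glimm}, because in general the complete regularisation of $\Pm(A) \times \Pm(B)$ can carry a strictly finer topology than the product $\Gl(A) \times \Gl(B)$, and then $\Delta$ fails to be continuous for the product topology. The gap is fillable precisely under the hypotheses of (i): openness of $\rho_A$ and $\rho_B$ makes $\rho_A \times \rho_B$ an open continuous surjection, hence a quotient map for the product topology, and then for $f \in C^b(\Pm(A \mint B))$ the identity $f^{\rho} \circ \Delta \circ (\rho_A \times \rho_B) = f \circ \Phi$ shows $f^{\rho} \circ \Delta$ is continuous, so $\Delta$ is continuous and $\psi = \Delta^{-1}$ is a homeomorphism as you need. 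You should either make this quotient argument explicit or cite \cite[Theorem 5.2]{m_glimm} directly as the paper does; as written, the citation claims more than the theorem delivers.

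For (iii) forward, the ``main obstacle'' you flag is avoided entirely by the paper, which works with factorial rather than primitive ideals: Hausdorffness of $\Pm(B)$ forces $\Pm(B) = \Fac(B)$ (and likewise for $A$), and then property (F) together with \cite[Proposition 5.1]{lazar_tensor} shows $\Delta$ is a homeomorphism of $\Pm(A) \times \Pm(B)$ onto $\Fac(A \mint B)$, so $\Fac(A \mint B)$ and hence its subspace $\Pm(A \mint B)$ is Hausdorff. Your bundle-theoretic route does close, and the lemma you defer is short: the fibres $(A/P) \mint (B/Q)$ are nonzero and simple (minimal tensor products of simple C$^{\ast}$-algebras are simple, and every nonzero simple C$^{\ast}$-algebra is primitive, since any irreducible representation of it is faithful), so each ideal $K_{(P,Q)} = \Phi(P,Q)$ is maximal and primitive; consequently every primitive ideal of $A \mint B$ equals some $K_{(P,Q)}$, the base map restricts to a continuous bijection of $\Pm(A \mint B)$ onto $\Pm(A) \times \Pm(B)$, and this bijection is open by continuity of the $C_0(\Pm(A) \times \Pm(B))$-algebra and Proposition~\ref{p:c0xbundles}(iii), giving the required homeomorphism. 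So both of your alternative forward arguments are correct once the (i) citation is repaired, but the paper's appeals to \cite[Theorem 5.2]{m_glimm} and to Lazar's result are the more economical path.
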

\begin{proof}
(i): If $B$ is exact then $A \mint B$ satisfies property (F), hence property $\fgl$, for all C$^{\ast}$-algebras $A$.  Hence $\rho_{\alpha}$ is open for all C$^{\ast}$-algebras $A$ with $\rho_A$ open by~\cite[Theorem 5.2]{m_glimm}.

Conversely if $B$ is inexact, then by Proposition~\ref{p:counter}(iv) there is a C$^{\ast}$-algebra $A$ with $\Pm(A)$ Hausdorff, hence $\rho_A = \mathrm{id}$ open, such that $\rho_{\alpha}$ is not open.

(ii): If $B$ is exact, then it follows from~\cite[Corollary 2.5]{kaniuth} that $A \mint B$ is quasi-standard for every quasi-standard $A$.

Conversely, if $B$ is inexact then by Proposition~\ref{p:counter}(iv) there is a quasi-standard C$^{\ast}$-algebra $A$ for which $A \mint B$ is not quasi-standard.

(iii): We will make use of the fact that if $A$ is a C$^{\ast}$-algebra such that either $\Pm (A)$ or $\Fac (A)$ is Hausdorff, then $\Pm (A) = \Fac (A) = \Pme (A)$, see e.g.~\cite[p. 474]{blanch_kirch}.  

Suppose that $B$ is exact and that $A$ is a C$^{\ast}$-algebra with $\Pm (A)$ Hausdorff. Then since $A \mint B$ has property (F),~\cite[Proposition 5.1]{lazar_tensor} shows that $\Delta$ is a homeomorphism of $\Fac (A) \times \Fac (B) = \Pm (A) \times \Pm (B)$ onto $\Fac (A \mint B )$.   Hence $\Fac (A \mint B )$ is Hausdorff, and thus the same is true of $\Pm (A \mint B)$.

Conversely, if $B$ is inexact then by Proposition~\ref{p:counter}(iii) there is a separable C$^{\ast}$-algebra $A$ with $\Pm(A)$ Hausdorff for which $\Pm (A \mint B)$ is non-Hausdorff.

\end{proof}
\begin{example}
Let $M = \prod_{n \geq 1} M_n ( \mathbb{C} )$, so that $M$ is quasi-standard as in Theorem~\ref{t:ineq}. Moreover, $M$ is inexact by~\cite[Theorem 1.1]{kirch_fubini}. Then by Proposition~\ref{p:counter}(iv), there is a separable unital C$^{\ast}$-algebra $A$ with Hausdorff primitive ideal space $\Pm (A)$  homeomorphic to $\nhat$ such that $A \mint M$ is not quasi-standard.  In particular, the assumption that the zero ideal of the C$^{\ast}$-algebra $B$ of Theorem~\ref{t:ineq} is prime cannot be dropped.
\end{example}

We will give an analogous result for maximal tensor products of (unital) C$^{\ast}$-algebras in Theorem~\ref{t:maxt}.  The following proposition gives some (known) properties of the Dauns-Hofmann representation of the maximal tensor product of two unital C$^{\ast}$-algebras.
\begin{proposition}
\label{p:maxt}
Let $A$ and $B$ be unital C$^{\ast}$-algebras, then
\begin{enumerate}
\item[(i)] the map $(G,H) \mapsto G \maxt B + A \maxt H$ is a homeomorphism of $\Gl (A) \times \Gl (B)$ onto $\Gl (A \maxt B )$,
\item[(ii)] the Glimm quotients of $A \maxt B$ are canonically $\ast$-isomorphic to $(A/G) \maxt (B/H)$ for $(G,H) \in \Gl (A) \times \Gl (B)$,
\item[(iii)] The Dauns-Hofmann representation of $A \maxt B$ defines canonically an upper-semicontinuous C$^{\ast}$-bundle
\[
\left( \Gl (A) \times \Gl (B) , A \maxt B , \pi_G \maxt \sigma_H : A \maxt B \rightarrow (A/G) \maxt (B / H ) \right),
\]
where for $(G,H) \in \Gl(A) \times \Gl (B)$ , $\pi_G : A \rightarrow A/G$ and $\sigma_H : B \rightarrow B/ H$ are the quotient maps.
\end{enumerate}
\end{proposition}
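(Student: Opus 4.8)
The plan is to derive all three assertions from one structural fact: the maximal tensor product always satisfies Tomiyama's property (F). Concretely, for ideals $I \triangleleft A$ and $J \triangleleft B$ the canonical quotient map induces a $\ast$-isomorphism
\[
(A \maxt B)/(I \maxt B + A \maxt J) \cong (A/I) \maxt (B/J),
\]
equivalently $\ker(q_I \maxt q_J) = I \maxt B + A \maxt J$; that is, the maps $\Phi$ and $\Delta$ of \eqref{e:phi} and \eqref{e:delta} formed for $\otimes_{\max}$ agree on all of $\mathrm{Id}'(A) \times \mathrm{Id}'(B)$. I would obtain this from the universal property of $\maxt$ in two steps: first $(A \maxt B)/(A \maxt J) \cong A \maxt (B/J)$, since representations of $A \maxt (B/J)$ are exactly commuting pairs of representations of $A$ and of $B$ that annihilate $J$, and such pairs correspond to representations of $A \maxt B$ vanishing on $A \maxt J$; then quotienting the latter by the image of $I \maxt B$ yields $(A/I) \maxt (B/J)$.

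Granting property (F), part (ii) becomes immediate once (i) is in hand: applying the displayed isomorphism with $I = G$ and $J = H$ identifies the Glimm quotient $(A \maxt B)/(G \maxt B + A \maxt H)$ canonically with $(A/G) \maxt (B/H)$, the quotient map being $\pi_G \maxt \sigma_H = q_G \maxt q_H$. For part (i), I would first note that since $A$ and $B$ are unital, so is $A \maxt B$; hence $\Pm(A \maxt B)$ is compact and its complete regularisation $\Gl(A \maxt B)$ is compact Hausdorff, as are $\Gl(A)$, $\Gl(B)$, and $\Gl(A) \times \Gl(B)$. It therefore suffices to exhibit $(G,H) \mapsto G \maxt B + A \maxt H$ as a continuous bijection of $\Gl(A) \times \Gl(B)$ onto $\Gl(A \maxt B)$, since a continuous bijection from a compact space onto a Hausdorff space is automatically a homeomorphism. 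Because property (F) holds for $\maxt$, the map $\Delta$ (for $\otimes_{\max}$) agrees with $\Phi$, and I would invoke the Glimm-space analysis of Kaniuth \cite{kaniuth} and \cite[Theorem 4.8]{m_glimm}: the embedding of $\Fac(A) \times \Fac(B)$ into $\Fac(A \maxt B)$ together with its retraction $\Psi$ and the identity $\rho_{\max} \circ \Phi = \Delta \circ (\rho_A \times \rho_B)$ show that every Glimm ideal of $A \maxt B$ has the form $\Delta(G,H)$, that $\Delta$ is injective (as $\Psi \circ \Delta = \mathrm{id}$), and that it is bicontinuous.

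Finally, part (iii) assembles (i) and (ii) via the Dauns--Hofmann theorem. As $\Gl(A \maxt B)$ is compact, hence locally compact, Theorem~\ref{t:dh}(i) realises $A \maxt B$ as the section algebra of an upper-semicontinuous C$^\ast$-bundle over $\Gl(A \maxt B)$ whose fibre at a point is the corresponding Glimm quotient and whose structure map is the Glimm quotient map. Transporting this along the homeomorphism of (i) and inserting the fibre identifications of (ii) produces exactly the bundle $(\Gl(A) \times \Gl(B),\, A \maxt B,\, \pi_G \maxt \sigma_H)$ claimed.

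The main obstacle is part (i). The auxiliary ideal-space results I rely on, namely the embedding of $\Fac(A) \times \Fac(B)$ and the behaviour of the complete regularisation, are developed in \cite{lazar_tensor} for the minimal tensor product, so the real work is to check that property (F) for $\maxt$ allows these arguments to run for the maximal norm. For unital algebras this is essentially equivalent to the centre identity $Z(A \maxt B) = \overline{Z(A) \otimes Z(B)}$: for unital $C$ one has $\Gl(C) \cong \hat{Z(C)}$, the maximal ideal space of the centre, and the Glimm ideals are precisely the ideals generated by the maximal ideals of $Z(C)$, so surjectivity of $(G,H) \mapsto G \maxt B + A \maxt H$ onto $\Gl(A \maxt B)$ holds exactly when the centre is no larger than $\overline{Z(A) \otimes Z(B)}$. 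Establishing this centre identity (or citing Kaniuth for it) is the crux of the argument.
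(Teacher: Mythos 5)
Your proposal is correct, and it is substantially more self-contained than the paper's own proof, which consists essentially of three citations: (i) is quoted from Kaniuth~\cite[p. 304]{kaniuth}, (ii) is deduced from (i) together with~\cite[Proposition 3.15]{blanch_def}, and (iii) follows from compactness of $\Gl (A) \times \Gl (B)$ in the unital case plus the Dauns--Hofmann Theorem (Theorem~\ref{t:dh}), exactly as in your last paragraph. For (ii), you replace the citation of Blanchard by a direct argument from the universal property of $\maxt$; the identification $\ker (q_I \maxt q_J) = \overline{I \maxt B + A \maxt J}$ (exactness of the maximal tensor functor) is indeed standard and correct, provided one reads $I \maxt B$ as the closed ideal of $A \maxt B$ generated by the elementary tensors $i \otimes b$ --- injectivity of the natural map $I \maxt B \rightarrow A \maxt B$ is neither automatic for the max norm nor needed here. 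For (i), your compactness reduction (a continuous bijection from a compact space onto a Hausdorff space is a homeomorphism) is a genuine simplification available only in the unital setting, and your Stone--Weierstrass reformulation of surjectivity as the centre identity $Z(A \maxt B) = \overline{Z(A) \otimes Z(B)}$ --- via Glimm's bijection between the Glimm ideals of a unital C$^{\ast}$-algebra and the maximal ideals of its centre --- is correct; but you do not prove that identity, and you rightly flag that the crux must be cited from Kaniuth, which is precisely what the paper does, so at that point the two proofs coincide in substance. One caution: \cite[Theorem 4.8]{m_glimm} and the machinery of~\cite{lazar_tensor} that you gesture at are proved for the minimal norm and cannot be invoked verbatim for $\maxt$ (for instance, $\Fac (A) \times \Fac (B)$ need not embed in $\Fac (A \maxt B)$ by the same argument); your fallback to Kaniuth, or a direct verification using the centre identity as you outline, is the right repair, and with it the proposal is sound.
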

\begin{proof}
(i) is shown in~\cite[p. 304]{kaniuth}, and (ii) follows from (i) and~\cite[Proposition 3.15]{blanch_def}. Since $A$ and $B$ are unital, $\Gl (A) \times \Gl (B)$ is compact, and so (iii) is then immediate from the Dauns-Hofmann Theorem (Theorem~\ref{t:dh}).
\end{proof}

\begin{theorem}
\label{t:maxt}
Let $B$ be a unital quasi-standard C$^{\ast}$-algebra.  Then $B$ is nuclear if and only if $A \maxt B$ is quasi-standard for every quasi-standard C$^{\ast}$-algebra $A$.
\end{theorem}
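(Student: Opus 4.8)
The plan is to prove both implications by leaning on the minimal-tensor machinery of Sections~4--5 together with the one structural feature that distinguishes the maximal tensor product: it is always exact, i.e. $\ker(q_I \maxt q_J) = I \maxt B + A \maxt J$ for all ideals, so that the maximal analogue of property (F) holds automatically. For the forward implication, suppose $B$ is nuclear. Then $B$ is exact and $A \maxt B = A \mint B$ for every C$^{\ast}$-algebra $A$, so for quasi-standard $A$ the forward half of Theorem~\ref{t:qs}(ii) (which needs only exactness of $B$, via~\cite[Corollary 2.5]{kaniuth}) gives that $A \mint B = A \maxt B$ is quasi-standard. Equivalently, by Proposition~\ref{p:maxt} the Glimm quotients of $A \maxt B$ are $(A/G)\maxt(B/H) = (A/G)\mint(B/H)$, so its Dauns--Hofmann bundle coincides with that of the quasi-standard algebra $A \mint B$.

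For the converse I would argue the contrapositive: given a non-nuclear unital quasi-standard $B$, construct a quasi-standard $A$ with $A \maxt B$ not quasi-standard. The crucial simplification is that, by exactness of the maximal tensor product, the natural $C(\nhat)$-structure $\mu_A \maxt 1$ on $A \maxt B$ arising from a continuous bundle $A$ over $\nhat$ has fibres $(A/I_x)\maxt B = A_x \maxt B$, so it agrees \emph{identically} with the fibrewise maximal tensor product bundle $\mathscr{A}\maxt B$ (in sharp contrast with Theorem~\ref{c:phi=delta}(ii), where this coincidence requires property $\fxy$). Hence continuity of $(A \maxt B, \nhat, \mu_A \maxt 1)$ is exactly continuity of the fibrewise maximal tensor product. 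The first step is therefore to establish the maximal-tensor analogue of Proposition~\ref{p:disc} (equivalently, of~\cite[Proposition 4.2]{kirch_wass}): for non-nuclear $B$ there is a separable unital continuous $C(\nhat)$-algebra $(A,\nhat,\mu_A)$ with $(A \maxt B, \nhat, \mu_A \maxt 1)$ discontinuous at $\infty$. I expect the same pullback construction as in Proposition~\ref{p:disc} to go through once a continuous bundle over $\nhat$ with discontinuous \emph{maximal} fibrewise tensor product is supplied by the Kirchberg--Wassermann theory of operations on bundles, nuclearity playing the role that exactness played for the minimal norm.

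Next I would upgrade $A$ to a quasi-standard algebra exactly as in Corollary~\ref{c:disc}: by~\cite[Corollary 4.7]{blanch_free} there is a unital continuous $C(\nhat)$-algebra $\tilde A$ with \emph{simple} fibres (whence $\Pm(\tilde A)\cong\nhat$ and $\tilde A$ is quasi-standard) and a $C(\nhat)$-module monomorphism $\iota : A \to \tilde A$. The one point needing care beyond the minimal case is that $\iota \maxt \mathrm{id}_B$ need not be injective; but Blanchard's amalgamated embedding carries a $C(\nhat)$-linear conditional expectation $E : \tilde A \to A$, and then $E \maxt \mathrm{id}_B$ is a completely positive left inverse of $\iota \maxt \mathrm{id}_B$, forcing injectivity. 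With $\iota \maxt \mathrm{id}_B$ a $C(\nhat)$-module $\ast$-monomorphism, Lemma~\ref{l:subcx}(iii) propagates discontinuity: since $(A \maxt B, \nhat, \mu_A \maxt 1)$ is discontinuous, so is $(\tilde A \maxt B, \nhat, \mu_{\tilde A} \maxt 1)$.

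Finally I would conclude that $\tilde A \maxt B$ is not quasi-standard, following Proposition~\ref{p:counter}(iv) and the diagram argument of Corollary~\ref{c:disc2}. By Proposition~\ref{p:maxt}(i) one has $\Gl(\tilde A \maxt B)\cong \Gl(\tilde A)\times\Gl(B)=\nhat\times\Gl(B)$, and the Dauns--Hofmann base map $\phi_\alpha$ satisfies $p_1\circ\phi_\alpha=\overline{\phi}_{\tilde A}$, where $\overline{\phi}_{\tilde A}$ is the base map of the coarser structure $\mu_{\tilde A}\maxt 1$; discontinuity of that coarser bundle means $\overline{\phi}_{\tilde A}$ is not open, so $\phi_\alpha$ cannot be open (as $p_1$ is), and the Dauns--Hofmann bundle of $\tilde A \maxt B$ is discontinuous. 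Thus $\tilde A \maxt B$ is not quasi-standard, completing the contrapositive. The main obstacle is the first step, namely the maximal-tensor analogue of~\cite[Proposition 4.2]{kirch_wass} detecting non-nuclearity through a discontinuous fibrewise maximal tensor product over $\nhat$; a secondary, more routine, point is confirming that the Blanchard embedding admits the conditional expectation needed for injectivity of $\iota \maxt \mathrm{id}_B$.
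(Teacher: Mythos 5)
Your forward implication is the paper's verbatim (nuclearity gives exactness, $A \maxt B = A \mint B$ has property (F), and \cite[Corollary 2.5]{kaniuth} applies). For the converse you have also isolated the same decisive structural fact as the paper: for $\maxt$ the analogue of property (F) holds automatically, so by Proposition~\ref{p:maxt} the Dauns--Hofmann bundle of $A \maxt B$ \emph{is} the fibrewise maximal tensor product bundle, and a single fibrewise norm discontinuity already destroys quasi-standardness. But the paper exploits this far more directly than you propose. It takes a non-nuclear Glimm quotient $B_p$ of $B$ (\cite[Proposition 3.23]{blanch_def}), extracts from the proof of \cite[Theorem 3.2]{kirch_wass} a unital $C \subseteq B(H)$ and $t \in C \odot B_p$ with $\norm{t}_{C \maxt B_p} > \norm{t}_{B(H) \maxt B_p}$, and then simply defines $A$ to be the algebra of sequences in $B(H)$ converging in norm to an element of $C$. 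This $A$ is \emph{already} quasi-standard (as in \cite[Proposition 3.6]{arch_som_qs}), with $\Gl(A) \cong \nhat$ and Glimm fibres $A_n = B(H)$ for $n \in \mathbb{N}$, $A_\infty = C$; lifting $t$ to $\overline{t} \in A \odot B$ gives $\norm{(\pi_n \maxt \sigma_p)(\overline{t})} = \norm{t}_{B(H) \maxt B_p}$ for finite $n$ but $\norm{(\pi_\infty \maxt \sigma_p)(\overline{t})} = \norm{t}_{C \maxt B_p}$, so the Dauns--Hofmann bundle of $A \maxt B$ is discontinuous at $(\infty,p)$ and $A \maxt B$ is not quasi-standard. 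No pullback, no embedding, no openness diagram.

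Measured against this, your plan has two genuine problems. First, your self-declared main obstacle --- the maximal-tensor analogue of Proposition~\ref{p:disc} --- is left entirely to an expected analogy; and in fact, by your own observation that fibrewise continuity and $C(\nhat)$-algebra continuity coincide for $\maxt$, the pullback interleaving of Proposition~\ref{p:disc} is superfluous here. That interleaving was needed in the minimal case precisely because the two notions of continuity differ there (Theorem~\ref{t:ineq}); in the maximal case all you need is a continuous bundle over $\nhat$ with discontinuous fibrewise maximal tensor product, which is exactly the content of \cite[Theorem 3.2]{kirch_wass} that the paper invokes. Second, and more seriously, your upgrade step via \cite[Corollary 4.7]{blanch_free} hinges on the unverified claim that Blanchard's embedding $\iota : A \rightarrow \tilde{A}$ admits a $C(\nhat)$-linear conditional expectation $E : \tilde{A} \rightarrow A$; this is essential to your argument, since without it injectivity of $\iota \maxt \mathrm{id}_B$ fails to be automatic (the maximal tensor product is not injective), and then the propagation of discontinuity through Lemma~\ref{l:subcx}(iii) collapses. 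Nothing in the cited corollary provides such an expectation, so as written this is a gap. The paper's choice of a sequence algebra that is quasi-standard from the outset renders the entire embedding step --- and hence the expectation --- unnecessary; you would do better to adopt that construction than to patch yours.
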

\begin{proof}
If $B$ is nuclear, then $B$ is exact, so that for any C$^{\ast}$-algebra $A$ we have that $A \maxt B = A \mint B$ has property (F).  Thus for all quasi-standard $A$, $A \maxt B$ is quasi-standard by~\cite[Corollary 2.5]{kaniuth}.

Conversely, suppose that $B$ is non-nuclear.  For each $q \in \Gl (B)$ let $G_q$ be the corresponding Glimm ideal of $B$ and denote by $ ( \Gl (B) , B , \sigma_q : B \rightarrow B_q )$ the corresponding continuous C$^{\ast}$-bundle over $\Gl (B)$, where $B_q = B / G_q$ for all $ q \in \Gl (B)$. By~\cite[Proposition 3.23]{blanch_def}, there is $p \in \Gl (A)$ for which $B_p$ is non-nuclear.  As in the proof of~\cite[Theorem 3.2]{kirch_wass}, one can construct a Hilbert space $H$, a unital C$^{\ast}$-subalgebra $C \subseteq B(H)$ and $t = \sum_{i=1}^{\ell} r_i \otimes s_i \in C \odot B_p$ such that $\norm{t}_{C \maxt B_p} > \norm{t}_{B(H) \maxt B_p}$.

Denote by $A$ the C$^{\ast}$-algebra of sequences $(T_n) \subset B(H)$ such that $T_n$ converges in norm to some element $T \in C$.   Then, as in the proof of~\cite[Proposition 3.6]{arch_som_qs}, $A$ is quasi-standard, $\Gl (A)$ is homeomorphic to $\hat{\mathbb{N}}$, and  the Glimm quotients $A_n$ of $A$ are given by
\[
A_n = \left\{ \begin{array}{c c l}
B(H) & \mbox{ if } & n \in \mathbb{N} \\
C & \mbox{ if } & n = \infty 
\end{array}
\right.
\]
 We denote by $(\hat{\mathbb{N}} , A , \pi_n : A \rightarrow A_n )$ the corresponding continuous C$^{\ast}$-bundle over $\hat{\mathbb{N}}$.   By Proposition~\ref{p:maxt} we may identify $\Gl (A \maxt B ) = \hat{\mathbb{N}} \times \Gl (B)$, and the Glimm quotients of $A \maxt B$ are isomorphic to $A_n \maxt B_p$ for $(n,p) \in \hat{\mathbb{N}} \times \Gl (B)$.

For $1 \leq i \leq \ell$ choose $\overline{r_i} \in A$ and $\overline{s_i} \in B$ such that $\pi_n (\overline{r_i}) = r_i$ for all $n \in \nhat$, and $\sigma_{p} (\overline{s_i} ) = s_i$.  Then setting $\overline{t} = \sum_{i=1}^{\ell} \overline{r_i} \otimes \overline{s_i} \in A \odot B$, we have $\pi_{\infty} \otimes \sigma_G (\overline{t})=t$.  Then we have $\norm{ (\pi_n \maxt \sigma_p) ( \overline{t})} = \norm{t}_{B(H) \maxt B}$ for $n \in \mathbb{N}$, while $\norm{ (\pi_{\infty} \maxt \sigma_p )( \overline{t} )} = \norm{t}_{C \maxt B}$.   Since $(n,p) \rightarrow (\infty , p )$ in $\hat{\mathbb{N}} \times \Gl (B)$, it follows that $(n,q) \mapsto \norm{(\pi_n \maxt \sigma_q)(t)}$ is discontinuous at $(\infty , p )$.  In particular, $A \maxt B$ is not quasi-standard.
\end{proof}

\textbf{Acknowledgement:}  I am grateful to R.J. Archbold for detailed comments on an earlier draft of this work, and also my supervisor R.M. Timoney for his helpful suggestions.

\bibliography{refs2}
\bibliographystyle{rt}
\end{document}